\documentclass[12pt,a4paper]{amsart}
\usepackage{amsmath,amssymb,latexsym,tabularx}
\usepackage[dvips]{graphicx}
\usepackage[colorlinks=true,linkcolor=blue,citecolor=red]{hyperref}
\usepackage[usenames,dvipsnames]{color}
\textwidth=16.000cm \textheight=22.000cm \topmargin=0.00cm
\oddsidemargin=0.00cm \evensidemargin=0.00cm \headheight=14.4pt
\headsep=1.2500cm \numberwithin{equation}{section}
\newtheorem{theorem}{Theorem}[section]
\newtheorem{proposition}[theorem]{Proposition}
\newtheorem{corollary}[theorem]{Corollary}
\newtheorem{lemma}[theorem]{Lemma}
\newtheorem{question}[theorem]{Question}
\theoremstyle{remark}

\newtheorem{definition}[theorem]{Definition}
\newtheorem{example}[theorem]{Example}

\newcommand{\asdim}{\operatorname{asdim}}

\title[A concept for resemblance in large scale geometry]{A concept for resemblance in large scale geometry}

\author[Sh. Kalantari]{Sh. Kalantari}
 \address[Sh. Kalantari]{Department of Basic Sciences, Babol Noshirvani University of Technology, Shariati Ave.,Babol, Iran, Post Code:47148-71167.}
 \email{shahab.kalantari@nit.ac.ir}

\subjclass[2020]{51F99, 54A05, 54A99, 54F45}%Extension of maps; metric geometry (others); Categories of topological space and continuous mappings; Global geometric and topological methods (a la Gromov), differential geometric analysis on metric spaces
\keywords{asymptotic dimension, asymptotic resemblance, large scale resemblance, nearness}

\begin{document}
\maketitle
\begin{abstract}
In this paper, we introduce the notion of \emph{large scale resemblance structure} as a new large scale structure by axiomatizing the concept of \emph{being alike in large scale} for a family of subsets of a set. We see that in a particular case, large scale resemblances on a set can induce a \emph{nearness} on it, and as a consequence, we offer a relatively big class of examples to show that \emph{not every near family is contained in a bunch}. Besides, We show how some large scale properties like \emph{asymptotic dimension} can be generalized to large scale resemblance spaces.
\end{abstract}
%\tableofcontents
\section{introduction}
In 1951 Efremovic defined the concept of \emph{proximity} as a \emph{small scale structure} on sets. He, in fact, tried to axiomatize the notion of \emph{being near} for two subsets $A$ and $B$ of a set $X$ (\cite{Ef1},\cite{Ef2}). As a generalization for the concept of proximity, Herrlich defined the notion of \emph{nearness} (\cite{her}). Nearness, as a small scale structure, tries to axiomatize the concept of being near for a family of subsets of a set $X$, and it somehow unifies other small scale structures like \emph{uniformity}, proximity and \emph{contiguity}:\\
`The category of all nearness spaces and nearness maps contains categories of these small scale structures and their preserving maps as embedded subcategories with some nice properties (\cite{her}).'\\
It is worth mentioning that each small scale structure has its advantages and difficulties, and our purposes and needs can lead us to use one of them. For more details about small scale structures and their relative applications, we recommend the reader to see \cite{hbb}, \cite{Nai2}, \cite{Nai}.\\
Recently investigating spaces in \emph{large scale} has drawn a significant amount of attention. Most large scale structures have been defined inspired by definitions of well-known small scale structures. For example, Roe defined the concept of \emph{coarse structures} inspired by the definition of uniformity (\cite{Roe}). Another example of a large scale structure is the concept of \emph{asymptotic resemblance relation} that has been defined influenced by the definition of proximity (\cite{me}). An asymptotic resemblance relation on a set $X$ tries to axiomatize the concept of \emph{being alike in large scale} for two subsets $A$ and $B$ of $X$. This paper is an attempt for axiomatizing the concept of being alike in large scale for a family of subsets of a set $X$. For this reason, we define the notion of \emph{large scale resemblance} and investigate some of its properties. Let us give an abbreviate overview of what happens in this paper.\\
In \S \ref{s1} we discuss briefly what is needed for understanding the rest of the paper. We give basic definitions and properties about near structures and proximities in small scale, and coarse structures and asymptotic resemblance relations in large scale. We introduce large scale resemblance spaces in \S \ref{s2}. Besides, basic definitions and properties about large scale resemblance spaces together with some examples of large scale resemblance spaces can be found in this section. In \S \ref{s3} we show how in some cases, a large scale resemblance structure on a set $X$ can induce a nearness structure on $X$. In addition, we show that the answer to the following question in `No' for a relatively big class of examples.\\
\begin{question}
Suppose that $(X,\mathfrak{N})$ is a near space and $\mathcal{A}\in \mathfrak{N}$. Is there any \emph{bunch} $\mathcal{C}$ in $(X,\mathfrak{N})$ such that $\mathcal{A}\subseteq \mathcal{C}?$
\end{question}
We show how the concept of \emph{asymptotic dimension} can be generalized to large scale resemblance spaces in \S \ref{s4}. In this section, we also show that large scale equivalent large scale resemblance spaces have the same asymptotic dimension. Finally, in \S \ref{s5} we define \emph{large scale regular} (\emph{LS-regular}) large scale resemblance spaces. We show in this section, the category of LS-regular large scale resemblance spaces and large scale mappings contains the category of all asymptotic resemblance spaces and AS.R mappings as an embedded reflective full subcategory.
\section{Preliminaries}\label{s1}
\subsection{Proximity and nearness}
Let us fix some notations first. For a nonempty set $X$, $\mathcal{P}(X)$ denotes the family of all subsets of $X$, so $\mathcal{P}(\mathcal{P}(X))$ denotes the family of all subsets of $\mathcal{P}(X)$. For two elements $\mathcal{A}$ and $\mathcal{B}$ of $\mathcal{P}(\mathcal{P}(X))$ we define
$$\mathcal{A}\vee \mathcal{B}=\{A\cup B\mid A\in \mathcal{A},B\in \mathcal{B}\}$$
In addition, assume that $\mathcal{A},\mathcal{B}\subseteq \mathcal{P}(X)$. We write $\mathcal{B}\ll \mathcal{A}$, if for each $A\in \mathcal{A}$ there exists some $B\in \mathcal{B}$ such that $B\subseteq A$.
\begin{definition}\label{near}
Let $X$ be a nonempty set. A \emph{near structure} or a \emph{nearness} on $X$ is a subset $\mathfrak{N}$ of $\mathcal{P}(\mathcal{P}(X))$ such that it has the following additional properties.\\
i) If $\bigcap_{A\in \mathcal{A}}A\neq \emptyset$ then $\mathcal{A}\in \mathfrak{N}$.\\
ii) If $\mathcal{A}\in \mathfrak{N}$ and $\mathcal{A}\ll \mathcal{B}$, then $\mathcal{B}\in \mathfrak{N}$.\\
iii) Each member of $\mathfrak{N}$ does not contain $\emptyset$.\\
iv) If $\mathcal{A},\mathcal{B}\notin \mathfrak{N}$ then $\mathcal{A}\vee \mathcal{B}\notin \mathfrak{N}$.\\
If $\mathfrak{N}$ is a nearness on $X$, then the pair $(X,\mathfrak{N})$ is called a \emph{N-space}.
\end{definition}
Let $(X,\mathfrak{N})$ be a N-space. For $A\subseteq X$ and $x\in X$ define $x\in \overline{A}$ if $\{x,A\}\in \mathfrak{N}$. We say $A\subseteq X$ is closed if $\overline{A}=A$. So we have a topology on $X$. This topology is $R_{0}$ and it is called the induced topology by $\mathfrak{N}$ on $X$.
To make this paper more self-content, let us recall some more definitions here.
\begin{definition}
Let $X$ be a set. Suppose that $\delta$ is a relation on $\mathcal{P}(X)$. For each $A,B\subseteq X$, denote $(A,B)\notin \delta$ by $A\bar{\delta}B$. Then the relation $\delta$ is called a \emph{proximity} on $X$ if it satisfies the following properties.\\
i) If $A\delta B$ then $B\delta A$.\\
ii) $A\delta (B\cup C)$ if and only if $A\delta B$ or $A\delta C$.\\
iii) If $A\delta B$ then $A\neq \emptyset$ and $B\neq \emptyset$.\\
iv) If $A\bar{\delta}B$ then there exists some $D\subseteq X$ such that $A\bar{\delta}D$ and $(X\setminus D)\bar{\delta}B$.\\
for all $A,B,C\subseteq X$. The pair $(X,\delta)$ is called a \emph{proximity space} (\cite{Ef1},\cite{Ef2}).
\end{definition}
Let $(X,\delta)$ be a proximity space and $A\subseteq X$. Define $x\in \overline{A}$ if $\{x\}\delta A$. If we assume $A\subseteq X$ is closed, if and only if, $\overline{A}=A$ then we have a topology on $X$ which is called the induced topology by $\delta$ on $X$.
\begin{definition}
Suppose that $(X,\delta)$ is a proximity space. A family $\mathcal{C}$ of subsets of $X$ is called a \emph{cluster} in $(X,\delta)$ if it satisfies the following properties.\\
i) $A\delta B$, for all $A,B\in \mathcal{C}$.\\
ii) $(A\cup B)\in \mathcal{C}$ if and only if $A\in \mathcal{C}$ or $B\in \mathcal{C}$.\\
iii) $A\delta B$ for all $B\in \mathcal{C}$ implies $A\in \mathcal{C}$.
\end{definition}
A proximity space $(X,\delta)$ is called \emph{separated} if $\{x\}\delta\{y\}$ implies $x=y$, for all $x,y\in X$. Suppose that $(X,\delta)$ is separated proximity space. Let $\mathfrak{X}$ denote the family of all clusters in $(X,\delta)$. For two subsets $\mathfrak{A}$ and $\mathfrak{B}$ of $\mathfrak{X}$, define $\mathfrak{A}\delta^{*}\mathfrak{B}$ if $A\subseteq X$ is in all elements of $\mathfrak{A}$ and $B\subseteq X$ is in all elements of $\mathfrak{B}$ then $A\delta B$. It can be shown that $(\mathfrak{X},\delta^{*})$ is a compact Hausdorff proximity space and it contains $X$ as an open dense subset. Thus $\mathfrak{X}$ is a compactification of $(X,\delta)$ and it is called the \emph{Smirnov compactification} of $X$. For more details about proximity spaces and their Smirnov compactifications see \cite{Nai2} and \cite{Nai}.
\subsection{Coarse structures and asymptotic resemblance relations}
\begin{definition}\label{defcoarse}
Let $X$ be a set and assume that $E,F\subseteq X\times X$. We define $$E^{-1}=\{(y,x)\in X\times X\mid (x,y)\in E\}$$ and $$E\circ F=\{(x,y)\in X\times X\mid \textrm{there exists some }z\in X\textrm{ such that }(x,z)\in F\,and\,(z,y)\in E\}$$
A subset $\mathcal{E}$ of $\mathcal{P}(X\times X)$ is called a \emph{coarse structure} on the set $X$, if it satisfies the following properties.\\
i) If $E\subseteq F$, for some $F\in \mathcal{E}$, then $E\in \mathcal{E}$.\\
ii) $E\circ F,E\cup F,E^{-1}\in \mathcal{E}$, for all $E,F\in \mathcal{E}$.\\
iii) $\Delta=\{(x,x)\mid x\in X\}\in \mathcal{E}$.\\
If $\mathcal{E}$ is a coarse structure on the set $X$ then the pair $(X,\mathcal{E})$ is called a \emph{coarse space}.
\end{definition}
\begin{example}
Suppose that $(X,d)$ is a metric space. Define $E\in \mathcal{E}_{d}$ if there exists some $r>0$ such that $d(x,y)\leq r$, for all $(x,y)\in E$. The family $\mathcal{E}_{d}$ is a coarse structure on $X$.
\end{example}
Suppose that $X$ is a set and $E\subseteq (X\times X)$. Let
$$E(A)=\{b\in X\mid (a,b)\in E\,for\,some\,a\in A\}$$
for all $A\subseteq X$.
\begin{example}\label{tt}
Let $\overline{X}$ be a compactification of the Hausdorff and locally compact topological space $X$. Define $\mathcal{E}\subseteq \mathcal{P}(X\times X)$ as follows.\\
`$E\in \mathcal{E}$ if $E(K)$ is relatively compact in $X$, for all relatively compact $K\subseteq X$, and if $(x_{\alpha},y_{\alpha})_{\alpha \in I}$ is a net in $E$ and $x_{\alpha}\rightarrow w\in (\overline{X}\setminus X)$ then $y_{\alpha}\rightarrow w$.'\\
Recall that a subset $K$ of $X$ is called relatively compact if $\overline{K}$ is compact. The family $\mathcal{E}$ defines a coarse structure on $X$ and it is called the \emph{topological coarse structure} associated to the compactification $\overline{X}$ of $X$ (see 2.2 of \cite{Roe}).
\end{example}
A coarse structure $\mathcal{E}$ on the topological space $X$ is called \emph{compatible} with the topology if there exists an open $E\in \mathcal{E}$ such that $\Delta \subseteq E$ and it is called \emph{proper} if each bounded subset of $(X,\mathcal{E})$ is relatively compact. The following question arises naturally from Example \ref{tt}.\\
`Let $X$ be a locally compact Hausdorff topological space with a compatible and proper coarse structure $\mathcal{E}$. Is there any compactification $\overline{X}$ of $X$ such that the topological coarse structure associated to $\overline{X}$ is equal to $\mathcal{E}$?'.\\
This question has a partial answer.
\begin{proposition}\label{hig}
Let $\mathcal{E}$ be a proper and compatible coarse structure on the Hausdorff topological space $X$. Then there exists a compactification $hX$ of $X$ such that the topological coarse structure associated to $hX$ contains $\mathcal{E}$. In addition, if $\overline{X}$ is another compactification of $X$ with this property, then the identity map extends uniquely to a continuous map from $hX$ to $\overline{X}$.
\end{proposition}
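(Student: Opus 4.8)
The plan is to realise $hX$ as the \emph{Higson compactification} of $X$ determined by $\mathcal{E}$, i.e. as the maximal ideal space of a suitable unital $C^{*}$-subalgebra of $C_{b}(X)$, and then to check the two asserted properties directly against the definition of the topological coarse structure given in Example \ref{tt}.

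First I would introduce the \emph{Higson algebra}. For a bounded continuous $f\colon X\to\mathbb{C}$ and $E\subseteq X\times X$ set $\operatorname{Var}_{E}(f)(x)=\sup\{\,|f(x)-f(y)| : (x,y)\in E\,\}$, and call $f$ a \emph{Higson function} if for every $E\in\mathcal{E}$ and every $\varepsilon>0$ there is a bounded set $B$ with $\operatorname{Var}_{E}(f)(x)<\varepsilon$ for all $x\notin B$. Let $C_{h}(X)$ be the set of all Higson functions. One checks routinely that $C_{h}(X)$ is a closed, unital, $*$-closed subalgebra of $C_{b}(X)$. Using that $\mathcal{E}$ is compatible (so there is an open controlled neighbourhood of the diagonal) and proper (so every bounded set is relatively compact), I would show that $C_{0}(X)\subseteq C_{h}(X)$ and that $C_{h}(X)$ separates points of $X$ and separates points from closed sets; hence the Gelfand map embeds $X$ as a dense open subspace of the compact Hausdorff space $hX:=\operatorname{Max}(C_{h}(X))$, and every $f\in C_{h}(X)$ extends uniquely to $\tilde f\in C(hX)$, these extensions still separating the points of $hX$.

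Next I would check that $\mathcal{E}$ lies inside the topological coarse structure $\mathcal{E}_{hX}$ of $hX$. Fix $E\in\mathcal{E}$. If $K\subseteq X$ is relatively compact then $K$ is bounded (compatibility), so $E\circ(K\times K)$ is controlled, hence bounded, and therefore $E(K)$ is bounded and, by properness, relatively compact. Now let $(x_{\alpha},y_{\alpha})_{\alpha\in I}$ be a net in $E$ with $x_{\alpha}\to w\in hX\setminus X$. Since the bounded subsets of $(X,\mathcal{E})$ coincide with the relatively compact subsets of $X$, the net $(x_{\alpha})$ is eventually outside every bounded set, so for each $f\in C_{h}(X)$ the Higson condition forces $|f(x_{\alpha})-f(y_{\alpha})|\to 0$, whence $\tilde f(w)=\lim\tilde f(x_{\alpha})=\lim f(y_{\alpha})$ for all $f\in C_{h}(X)$; a standard subnet argument (compactness of $hX$, plus the fact that the $\tilde f$ separate points of $hX$) then gives $y_{\alpha}\to w$. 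Thus $E\in\mathcal{E}_{hX}$, i.e. $\mathcal{E}\subseteq\mathcal{E}_{hX}$. For universality, let $\overline{X}$ be another compactification whose associated topological coarse structure $\mathcal{E}_{\overline{X}}$ contains $\mathcal{E}$; I would show $C(\overline{X})|_{X}\subseteq C_{h}(X)$, since by the algebra/compactification correspondence this inclusion induces a continuous map $hX\to\overline{X}$ restricting to $\operatorname{id}_{X}$, and the map is unique because $X$ is dense in $hX$ and $\overline{X}$ is Hausdorff. To get the inclusion, take $g\in C(\overline{X})$ and suppose $g|_{X}\notin C_{h}(X)$: then there are $E\in\mathcal{E}$, $\varepsilon>0$ and a net $(x_{\alpha},y_{\alpha})\in E$ escaping every bounded set with $|g(x_{\alpha})-g(y_{\alpha})|\ge\varepsilon$; passing to a subnet, $x_{\alpha}\to w$ in $\overline{X}$ with $w\in\overline{X}\setminus X$ (the net leaves every relatively compact subset of $X$), and since $E\in\mathcal{E}_{\overline{X}}$ the definition of the topological coarse structure yields $y_{\alpha}\to w$, so $g(x_{\alpha})-g(y_{\alpha})\to 0$, a contradiction.

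The main obstacle I expect is the first step: one has to verify that the Higson algebra $C_{h}(X)$ is rich enough — that it contains $C_{0}(X)$ and separates points from closed sets — so that its maximal ideal space is genuinely a compactification of $X$. This is exactly where compatibility (an open controlled entourage of the diagonal) and properness (bounded equals relatively compact) are used in an essential way; once $hX$ is in hand, both the inclusion $\mathcal{E}\subseteq\mathcal{E}_{hX}$ and the universal property are diagram chases through the definition of the topological coarse structure combined with routine net/subnet compactness arguments.
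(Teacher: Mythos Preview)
Your sketch is correct and is precisely the standard construction of the Higson compactification via the Higson algebra $C_{h}(X)$, which is the content of Proposition~2.39 of Roe's \emph{Lectures on Coarse Geometry}; the paper itself gives no independent argument but simply cites that reference, so you have effectively unpacked the citation.
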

\begin{proof}
See Proposition 2.39 of \cite{Roe}.
\end{proof}
The compactification $hX$ in Proposition \ref{hig} is called the \emph{Higson compactification} of $X$. We denote the boundary $hX\setminus X$ by $\nu X$ and we call it the \emph{Higson corona} of $X$. For a way of constructing the Higson compactification of a topological space with a proper and compatible coarse structure, see \S 2.3 of \cite{Roe}.
\begin{definition}
Let $X$ be a set. An equivalence relation $\lambda$ on $\mathcal{P}(X)$ is called an \emph{asymptotic resemblance} (an AS.R) if,\\
i) $A_{i}\lambda B_{i}$ for $i\in \{1,2\}$ then $(A_{1}\cup B_{1})\lambda(A_{2}\cup B_{2})$.\\
ii) $(A_{1}\cup A_{2})\lambda B$ for nonempty subsets $A_{1},A_{2},B\subseteq X$ then there are $B_{1},B_{2}\neq \emptyset$ such that $B=B_{1}\cup B_{2}$ and $A_{i}\lambda B_{i}$, for $i\in \{1,2\}$.\\
In this case the pair $(X,\lambda)$ is called an \emph{asymptotic resemblance space} (AS.R space). For two subsets $A$ and $B$ of an AS.R space $(X,\lambda)$ if $A\lambda B$ then we say $A$ and $B$ are \emph{asymptotically alike}.
\end{definition}
\begin{example}
Suppose that $X$ is a set and let $\mathcal{E}$ denote a coarse structure on $X$. Define $A\lambda_{\mathcal{E}}B$ if there exists some $E\in \mathcal{E}$ such that $A\subseteq E(B)$ and $B\subseteq E(A)$. It can be easily seen that $\lambda_{\mathcal{E}}$ is an AS.R on $X$. If $\mathcal{E}=\mathcal{E}_{d}$ for some metric $d$ on $X$ then we denote $\lambda_{\mathcal{E}}$ by $\lambda_{d}$. In this case, if $A,B\subseteq X$ then $A\lambda_{d}B$ means that $A$ and $B$ has finite Hausdorff distance.
\end{example}
Let $(X,\lambda)$ be an AS.R space. A subset $D$ of $X$ is called \emph{bounded} if $D=\emptyset$ or $D\lambda\{x\}$, for some $x\in X$. Two subsets $A$ and $B$ of $X$ are called \emph{asymptotically disjoint}, if $L_{1}$ and $L_{2}$ are two unbounded subsets of $A$ and $B$, respectively, then they are not asymptotically alike. The AS.R space $(X,\lambda)$ is called to be \emph{asymptotically normal} if $A$ and $B$ are two asymptotically disjoint subsets of $X$, then there are subsets $X_{1}$ and $X_{2}$ of $X$ such that $X=X_{1}\cup X_{2}$ and they are asymptotically disjoint from $A$ and $B$, respectively. Let us recall that if $d$ is a metric on the set $X$, then $D$ is bounded in the AS.R space $(X,\lambda_{d})$, if and only if, $D$ is bounded in the metric space $(X,d)$. It can be shown that if $(X,d)$ is a metric space, then $(X,\lambda_{d})$ is an asymptotically normal AS.R space (Proposition 4.5 of \cite{me}).
\begin{definition}\label{pad}
Let $\mathcal{E}$ be a compatible and proper coarse structure on the normal topological space $X$. Assume that $(X,\lambda_{\mathcal{E}})$ is an asymptotically normal AS.R space. Define\\
`$A\delta B$ if $\overline{A}\cap \overline{B}\neq \emptyset$ or $A$ and $B$ are not asymptotically disjoint in $(X,\lambda_{\mathcal{E}})$.'
\end{definition}
It can be shown that the relation $\delta$ defined in Definition \ref{pad} is a separated proximity on the normal topological space $X$ which is compatible with the topology of $X$ and the Smirnov compactification of $(X,\delta)$ is homeomorphic to the Higson compactification of the coarse space $(X,\mathcal{E})$ (for details see \S 5 of \cite{me}).\\
We can end our preliminaries by the definition of \emph{asymptotic dimension}. The notion of asymptotic dimension of metric spaces has been introduced in \cite{gro}. For more detailed information about this concept, we recommend the reader to see \cite{asdim}. First recall that a family $\mathcal{U}$ of subsets of the coarse space $(X,\mathcal{E})$ is called \emph{uniformly bounded} if there exists some $E\in \mathcal{E}$ such that $U\times U\subseteq E$, for all $U\in \mathcal{U}$. If $\mathcal{E}=\mathcal{E}_{d}$, for some metric $d$ on $X$, we can equivalently say that $\mathcal{U}$ is uniformly bounded, if and only if, there exists some $r>0$ such that $\operatorname{diam}(U)\leq r$, for all $U\in \mathcal{U}$. For the case of AS.R spaces, we have the following definition.
\begin{definition}
Suppose that $(X,\lambda)$ is an AS.R space and $\mathcal{U}$ is a family of subsets of $X$. Let
$$\bigotimes_{\mathcal{U}}=\bigcup_{U\in \mathcal{U}}U\times U$$
We call the family $\mathcal{U}$ uniformly bounded if $A\subseteq \bigotimes_{\mathcal{U}}(B)$ and $B\subseteq \bigotimes_{\mathcal{U}}(A)$ implies $A\lambda B$, for all $A,B\subseteq X$.
\end{definition}
It can be shown that if $(X,d)$ is a metric space then a family $\mathcal{U}$ of subsets of $X$ is uniformly bounded in the AS.R space $(X,\lambda_{d})$, if and only if, $\mathcal{U}$ is uniformly bounded it the coarse space $(X,\mathcal{E}_{d})$ (see Proposition 6.2 of \cite{me}). Let $X$ be a set and assume that $\mathcal{U},\mathcal{V}\subseteq \mathcal{P}(X)$. Recall that `$\mathcal{U}$ refines $\mathcal{V}$' means that for each $U\in \mathcal{U}$ there exists some $V\in \mathcal{V}$ such that $U\subseteq V$. In addition, recall that the \emph{multiplicity} of a cover $\mathcal{U}$ of $X$ is the smallest natural number $n$ (if such a $n$ exists) such that each $x\in X$ belongs to at most $n$ elements of $\mathcal{U}$. Several equivalent definitions can be found in the literature for the asymptotic dimension of metric spaces and coarse spaces (see \cite{asdim}, \cite{Gra}). Since the following definition can be applied for AS.R spaces too, it is our favourite one.
\begin{definition}
Let $(X,d)$ be a metric space. We say that the \emph{asymptotic dimension} of $X$ is less than or equal to $n\in \mathbb{N}$ if for each uniformly bounded cover $\mathcal{U}$ of $X$ there exists a uniformly bounded cover $\mathcal{V}$ of $X$ such that $\mathcal{U}$ refines $\mathcal{V}$ and the multiplicity of $\mathcal{V}$ is less than or equal to $n+1$. If the asymptotic dimension of the metric space $(X,d)$ is less than or equal to $n$, but it is not less than $n-1$, then we say that the asymptotic dimension of $X$ is equal to $n$.
\end{definition}
We have definitions of asymptotic dimensions of coarse spaces and AS.R spaces if we substitute the word `metric space' with `coarse space' and `AS.R space', respectively.
\section{Large scale resemblance}\label{s2}
The following definition is the main definition of this paper.
\begin{definition}\label{defasl}
Let $X$ be a nonempty set. We call a subset $\mathfrak{C}$ of $\mathcal{P}(\mathcal{P}(X))$ a \emph{large scale resemblance} (LS.R) on $X$ if it has the following properties.\\
i) $\{A\}\in \mathfrak{C}$, for all $A\subseteq X$.\\
ii) If $\mathcal{B}\subseteq \mathcal{A}$, for some $\mathcal{A}\in \mathfrak{C}$, then $\mathcal{B}\in \mathfrak{C}$.\\
iii) If $\mathcal{A},\mathcal{B}\in \mathfrak{C}$ and $\mathcal{A}\cap \mathcal{B}\neq \emptyset$, then $\mathcal{A}\cup \mathcal{B}\in \mathfrak{C}$.\\
iv) If $\mathcal{A},\mathcal{B}\in \mathfrak{C}$, then $\mathcal{A}\vee \mathcal{B}\in \mathfrak{C}$.\\
If $\mathfrak{C}$ denotes a LS.R on the set $X$, then we call the pair $(X,\mathfrak{C})$ a \emph{large scale resemblance space} (LS.R space).
\end{definition}
\begin{example}\label{counter}
Let $X=\{a,b,c\}$. Assume that
$$\mathfrak{C}=\{\{A\}\mid A\subseteq X\}\cup \{\{\{a\},\{a,b\}\},\{\{a,c\},\{a,b,c\}\}\}$$
It is easy to see that $\mathfrak{C}$ is a LS.R on $X$.
\end{example}
\begin{example}
Let $d$ be a metric on the set $X$. Define the subset $\mathfrak{C}_{d}$ of $\mathcal{P}(\mathcal{P}(X))$ as follows.\\
`$\mathcal{A}\in \mathfrak{C}_{d}$ if there exists some $k>0$ such that $d_{H}(A,B)\leq k$, for each $A,B\in \mathcal{A}$.'\\
This subset $\mathfrak{C}_{d}$ of $\mathcal{P}(\mathcal{P}(X))$ is a LS.R on $X$ and we call it the LS.R induced by the metric $d$ on $X$. Instead of proving the claim of this example, we prove a much more general result in Example \ref{c1}.
\end{example}
\begin{example}\label{c1}
Let $\mathcal{E}$ be a coarse structure on the nonempty set $X$. We define the subset $\mathfrak{C}_{\mathcal{E}}$ of $\mathcal{P}(\mathcal{P}(X))$ as follows.\\
`$\mathcal{A}\in \mathfrak{C}_{\mathcal{E}}$ if there exists some $E\in \mathcal{E}$ such that for each $A,B\in \mathcal{A}$, $A\subseteq E(B)$.'\\
Properties i) and ii) of Definition \ref{defasl} hold evidently by using Definition \ref{defcoarse}. For the property iii) of Definition \ref{defasl} suppose that $\mathcal{A},\mathcal{B}\in \mathfrak{C}_{\mathcal{E}}$ and $C\in \mathcal{A}\bigcap \mathcal{B}$. Suppose that $E,F\in \mathcal{E}$ are such that $A\subseteq E(A^{\prime})$ and $B\subseteq F(B^{\prime})$, for all $A,A^{\prime}\in \mathcal{A}$ and $B,B^{\prime}\in \mathcal{B}$. Since $A\subseteq E(C)$ and $C\subseteq E(A)$, for all $A\in \mathcal{A}$, and $B\subseteq F(C)$ and $C\subseteq F(B)$, for all $B\in \mathcal{B}$, we clearly have $A\subseteq (E\circ F\cup F\circ E)(B)$ for all $A,B\in (\mathcal{A}\cup \mathcal{B})$. Thus $\mathcal{A}\cup \mathcal{B}\in \mathfrak{C}_{\mathcal{E}}$. Since the union of two elements of $\mathcal{E}$ is a member of $\mathcal{E}$, it is straightforward to verify the property iv) of Definition \ref{defasl}. Therefore $\mathfrak{C}_{\mathcal{E}}$ is a large scale resemblance on $X$ and we call it the LS.R induced by the coarse structure $\mathcal{E}$ on $X$.
\end{example}
\begin{example}
Let $(X,\mathcal{E})$ be a coarse space. Define $\tilde{\mathfrak{C}}_{\mathcal{E}}$ as follows.\\
`$\mathcal{A}\in \tilde{\mathfrak{C}}_{\mathcal{E}}$ if for each $A,B\in \mathcal{A}$ there exists some $E\in \mathcal{E}$ such that $A\subseteq E(B)$ and $B\subseteq E(A)$'.\\
Similar arguments to Example \ref{c1} can easily show that $\tilde{\mathfrak{C}}_{\mathcal{E}}$ is a LS.R on $X$.
\end{example}
\begin{example}
Let $(X,\lambda)$ be an AS.R space. We define $\mathfrak{C}_{\lambda}\subseteq \mathcal{P}(\mathcal{P}(X))$ as follows.\\
`$\mathcal{A}\in \mathfrak{C}_{\lambda}$ if $A\lambda B$, for all $A,B \in \mathcal{A}$.'\\
Properties i), ii) and iii) of Definition \ref{defasl} are straightforward consequences of the fact that $\lambda$ is an equivalence relation on $\mathcal{P}(X)$. Since for all $A,B,A^{\prime},B^{\prime}\subseteq X$, $A\lambda A^{\prime}$ and $B\lambda B^{\prime}$ implies $(A\cup A^{\prime})\lambda (B\cup B^{\prime})$, property iv) of Definition \ref{defasl} easily holds. Thus $\mathfrak{C}_{\lambda}$ is a LS.R on $X$. We call $\mathcal{C}_{\lambda}$ the LS.R induced by $\lambda$.
\end{example}
Recall that a coarse structure $\mathcal{E}$ on the set $X$ is called \emph{connected} if for each $x,y\in X$ there exists some $E\in \mathcal{E}$ such that $(x,y)\in E$.
\begin{corollary}
Let $\mathcal{E}$ be a coarse structure on the nonempty set $X$ and let $\lambda=\lambda_{\mathcal{E}}$. Then\\
i) $\mathfrak{C}_{\mathcal{E}}\subseteq \tilde{\mathfrak{C}}_{\mathcal{E}}$. If $\mathcal{E}$ is a connected coarse structure, then $\tilde{\mathfrak{C}}_{\mathcal{E}}= \mathfrak{C}_{\mathcal{E}}$, if and only if, $X$ is bounded. Besides, if $X$ is bounded then $\mathfrak{C}_{\mathcal{E}}=\mathcal{P}(\mathcal{P}(X))$.\\
ii) $\tilde{\mathfrak{C}}_{\mathcal{E}}=\mathfrak{C}_{\lambda}$.
\end{corollary}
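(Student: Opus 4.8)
The plan is to dispatch the three assertions in increasing order of difficulty. Assertion ii) requires nothing beyond unravelling the definitions: $\mathcal{A}\in\tilde{\mathfrak{C}}_{\mathcal{E}}$ says precisely that for every $A,B\in\mathcal{A}$ there is $E\in\mathcal{E}$ with $A\subseteq E(B)$ and $B\subseteq E(A)$, which is exactly the assertion that $A\,\lambda_{\mathcal{E}}\,B$ for all $A,B\in\mathcal{A}$, i.e. $\mathcal{A}\in\mathfrak{C}_{\lambda}$; hence $\tilde{\mathfrak{C}}_{\mathcal{E}}=\mathfrak{C}_{\lambda}$. The inclusion $\mathfrak{C}_{\mathcal{E}}\subseteq\tilde{\mathfrak{C}}_{\mathcal{E}}$ in i) is almost as short: if $E\in\mathcal{E}$ witnesses $\mathcal{A}\in\mathfrak{C}_{\mathcal{E}}$, then for $A,B\in\mathcal{A}$ the defining condition applied to the pair $(A,B)$ gives $A\subseteq E(B)$ and applied to $(B,A)$ gives $B\subseteq E(A)$, so the same $E$ witnesses $\mathcal{A}\in\tilde{\mathfrak{C}}_{\mathcal{E}}$.

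Before attacking the equivalence in i), I would isolate the elementary fact that, for $X\neq\emptyset$, $X$ is bounded in $(X,\lambda_{\mathcal{E}})$ if and only if $X\times X\in\mathcal{E}$. Indeed, if $X\times X\in\mathcal{E}$ then for any $x\in X$ we have $X\subseteq(X\times X)(\{x\})$ and $\{x\}\subseteq(X\times X)(X)$, so $X\,\lambda_{\mathcal{E}}\,\{x\}$; conversely, if $X\,\lambda_{\mathcal{E}}\,\{x\}$ is witnessed by $E$, then $X\subseteq E(\{x\})$ forces $(x,a)\in E$ for all $a\in X$, whence $X\times X\subseteq E\circ E^{-1}\in\mathcal{E}$ and so $X\times X\in\mathcal{E}$ by the subset axiom for coarse structures. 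Granting this, the ``besides'' clause follows at once: if $X$ is bounded then $(X\times X)(B)=X$ for every nonempty $B$, so every subfamily of $\mathcal{P}(X)$ lies in $\mathfrak{C}_{\mathcal{E}}$, i.e. $\mathfrak{C}_{\mathcal{E}}=\mathcal{P}(\mathcal{P}(X))$. The ``if'' half of the equivalence now drops out too: boundedness of $X$ gives $\tilde{\mathfrak{C}}_{\mathcal{E}}\subseteq\mathcal{P}(\mathcal{P}(X))=\mathfrak{C}_{\mathcal{E}}$, which together with the inclusion just proved yields $\tilde{\mathfrak{C}}_{\mathcal{E}}=\mathfrak{C}_{\mathcal{E}}$.

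For the ``only if'' half — the one genuinely non-formal point — I would prove the contrapositive by exhibiting, when $X$ is unbounded, a family in $\tilde{\mathfrak{C}}_{\mathcal{E}}\setminus\mathfrak{C}_{\mathcal{E}}$. The natural candidate is $\mathcal{A}_{0}=\{\,\{x\}\mid x\in X\,\}$. Connectedness of $\mathcal{E}$ guarantees that for any $x,y\in X$ there is $E_{0}\in\mathcal{E}$ with $(x,y)\in E_{0}$, and then $E_{0}\cup E_{0}^{-1}\in\mathcal{E}$ realises both $\{x\}\subseteq E(\{y\})$ and $\{y\}\subseteq E(\{x\})$; hence $\mathcal{A}_{0}\in\tilde{\mathfrak{C}}_{\mathcal{E}}$. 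On the other hand $\mathcal{A}_{0}\in\mathfrak{C}_{\mathcal{E}}$ would require a \emph{single} $E\in\mathcal{E}$ with $\{x\}\subseteq E(\{y\})$, i.e. $(y,x)\in E$, for all $x,y\in X$, i.e. $X\times X\subseteq E$; by the subset axiom this means $X\times X\in\mathcal{E}$, hence $X$ bounded by the previous paragraph. Thus $\tilde{\mathfrak{C}}_{\mathcal{E}}=\mathfrak{C}_{\mathcal{E}}$ forces $\mathcal{A}_{0}\in\mathfrak{C}_{\mathcal{E}}$ and therefore $X$ bounded.

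I expect the real substance of the argument to be the equivalence ``$X$ bounded $\iff$ $X\times X\in\mathcal{E}$'', since this is the only place where the composition and inversion axioms of a coarse structure are genuinely used and where the precise ($\lambda_{\mathcal{E}}$-based) definition of boundedness must be handled with care; the only real idea beyond definition-chasing is the choice of the test family $\mathcal{A}_{0}=\{\{x\}\mid x\in X\}$, whose membership in $\tilde{\mathfrak{C}}_{\mathcal{E}}$ is exactly what connectedness buys us. I would also keep an eye on the trivial bookkeeping around the empty set (e.g. $(X\times X)(\emptyset)=\emptyset$), which affects the statement $\mathfrak{C}_{\mathcal{E}}=\mathcal{P}(\mathcal{P}(X))$ only in a degenerate and harmless way.
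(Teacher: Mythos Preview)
Your argument is correct and is exactly the kind of routine verification the paper has in mind when it writes ``It is straightforward'': the paper gives no further details, and your choice of the test family $\mathcal{A}_{0}=\{\{x\}\mid x\in X\}$ is the natural one. Your remark about the empty set is also apt --- the ``besides'' clause is literally false for families containing both $\emptyset$ and a nonempty set, a harmless sloppiness in the paper's statement rather than a defect in your proof.
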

\begin{proof}
It is straightforward.
\end{proof}
\begin{lemma}\label{unifb}
Suppose that $\mathcal{U}$ and $\mathcal{V}$ are two uniformly bounded families of subsets of the AS.R space $(X,\lambda)$. Then
$$\mathcal{W}=\{U\cup V\mid U\in \mathcal{U},V\in \mathcal{V} \textrm{ and } U\cap V\neq \emptyset\}$$
is a uniformly bounded family of subsets of $(X,\lambda)$.
\end{lemma}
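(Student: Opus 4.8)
The plan is to work throughout with the associated relations $\bigotimes_{(-)}$ and to reduce the (possibly infinite) family $\mathcal{W}$ to two applications of the defining property of a uniformly bounded family, bridged by a single auxiliary set. I shall use freely that $\bigotimes_{\mathcal{U}\cup\mathcal{V}}=\bigotimes_{\mathcal{U}}\cup\bigotimes_{\mathcal{V}}$, and that $\bigotimes_{\mathcal{S}}(C)=\bigcup\{S\in\mathcal{S}\mid S\cap C\neq\emptyset\}$, so in particular $S\subseteq\bigotimes_{\mathcal{S}}(C)$ whenever $S\in\mathcal{S}$ meets $C$.

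The first step is to show that \emph{the union of two uniformly bounded families is uniformly bounded}. Suppose $A\subseteq\bigotimes_{\mathcal{U}\cup\mathcal{V}}(B)$ and $B\subseteq\bigotimes_{\mathcal{U}\cup\mathcal{V}}(A)$; set $A_{1}=A\cap\bigotimes_{\mathcal{U}}(B)$, $B_{1}=B\cap\bigotimes_{\mathcal{U}}(A)$, and let $A_{2},B_{2}$ be given by the same formulas with $\mathcal{V}$ in place of $\mathcal{U}$, so that $A=A_{1}\cup A_{2}$ and $B=B_{1}\cup B_{2}$. The crucial point is that the pair $(A_{1},B_{1})$ is automatically close with respect to $\mathcal{U}$: if $a\in A_{1}$, pick $U\in\mathcal{U}$ with $a\in U$ and $U\cap B\neq\emptyset$; since $U$ also meets $A$ (it contains $a$), we get $U\subseteq\bigotimes_{\mathcal{U}}(A)$, hence $\emptyset\neq U\cap B\subseteq B_{1}$, and therefore $a\in U\subseteq\bigotimes_{\mathcal{U}}(B_{1})$; the symmetric argument gives $B_{1}\subseteq\bigotimes_{\mathcal{U}}(A_{1})$. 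Thus $A_{1}\lambda B_{1}$ by uniform boundedness of $\mathcal{U}$, and likewise $A_{2}\lambda B_{2}$, so property (i) of an asymptotic resemblance yields $A=A_{1}\cup A_{2}\ \lambda\ B_{1}\cup B_{2}=B$.

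Now I turn to $\mathcal{W}$ itself. Let $A\subseteq\bigotimes_{\mathcal{W}}(B)$ and $B\subseteq\bigotimes_{\mathcal{W}}(A)$, and let $\mathcal{W}_{0}$ be the set of those $W\in\mathcal{W}$ that meet both $A$ and $B$. Since any member of $\mathcal{W}$ containing a point of $A$ automatically meets $A$, one gets $A,B\subseteq\bigcup\mathcal{W}_{0}$. For each $W\in\mathcal{W}_{0}$ fix a representation $W=U_{W}\cup V_{W}$ with $U_{W}\in\mathcal{U}$, $V_{W}\in\mathcal{V}$, $U_{W}\cap V_{W}\neq\emptyset$, choose a ``midpoint'' $p_{W}\in U_{W}\cap V_{W}$, and set $P=\{p_{W}\mid W\in\mathcal{W}_{0}\}$. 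If $a\in A$, say $a\in W\in\mathcal{W}_{0}$, then $a$ and $p_{W}$ lie together in $U_{W}$ or together in $V_{W}$, so $a\in\bigotimes_{\mathcal{U}\cup\mathcal{V}}(P)$; hence $A\subseteq\bigotimes_{\mathcal{U}\cup\mathcal{V}}(P)$, and the same reasoning (using for each $p_{W}$ a point of $W\cap A$) gives $P\subseteq\bigotimes_{\mathcal{U}\cup\mathcal{V}}(A)$. By the first step the family $\mathcal{U}\cup\mathcal{V}$ is uniformly bounded, so $A\lambda P$; since neither $\mathcal{W}_{0}$ nor $P$ depends on whether we started from $A$ or from $B$, the identical argument gives $B\lambda P$, and transitivity of $\lambda$ gives $A\lambda B$.

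I expect the gluing in the last step to be the main obstacle: it is tempting simply to observe that $W\cap A$ and $W\cap B$ are asymptotically alike for each $W$ (both being close to $p_{W}$) and then form the union over all $W$, but the asymptotic resemblance axioms only govern \emph{finite} unions, so this is illegitimate for an infinite $\mathcal{W}$. Passing through the single set $P$ and the single uniformly bounded family $\mathcal{U}\cup\mathcal{V}$ is precisely what replaces the forbidden infinite union by two legitimate invocations of uniform boundedness; the residual technical work is then the first step, whose success hinges on taking $B_{1}=B\cap\bigotimes_{\mathcal{U}}(A)$ rather than something symmetric in $A_{1}$, which is exactly what makes the inclusion $A_{1}\subseteq\bigotimes_{\mathcal{U}}(B_{1})$ come for free.
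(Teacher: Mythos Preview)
Your argument is correct and takes a genuinely different route from the paper. The paper works directly with $A$ and $B$: it decomposes $A$ into four pieces $\bigotimes_{\mathcal{U}}(B)$, $\bigotimes_{\mathcal{V}}(B)$, $\bigotimes_{\mathcal{U}}(C_{1})$, $\bigotimes_{\mathcal{V}}(C_{2})$ (where $C_{1},C_{2}$ arise from the intersection points of members of $\mathcal{U}$ and $\mathcal{V}$), invokes uniform boundedness of $\mathcal{U}$ and $\mathcal{V}$ to match each piece with a subset of $B$, and then uses axiom~(ii) of an AS.R to reassemble $A\lambda B$. Your proof instead isolates a clean auxiliary fact---the union $\mathcal{U}\cup\mathcal{V}$ is itself uniformly bounded---and then bridges $A$ and $B$ through a single midpoint set $P$ formed from the intersection points $p_{W}\in U_{W}\cap V_{W}$, so that two applications of the auxiliary fact and transitivity of $\lambda$ finish the job. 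The paper's approach avoids the extra lemma but pays for it with a four-term decomposition and some implicit bookkeeping; your approach is more modular, and the lemma on $\mathcal{U}\cup\mathcal{V}$ is in fact used elsewhere in the paper (in the verification that $\tilde{\mathfrak{C}}_{\lambda}$ satisfies property~(iv)), so separating it out is a net gain. Your handling of the first step---choosing $B_{1}=B\cap\bigotimes_{\mathcal{U}}(A)$ rather than something symmetric in $A_{1}$---is exactly the right manoeuvre to make the mutual inclusions automatic, and the observation that $A_{1}=\emptyset$ forces $B_{1}=\emptyset$ (implicit in your argument) keeps the empty case harmless.
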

\begin{proof}
Suppose that $A,B\subseteq X$ and $A\subseteq \bigotimes_{\mathcal{W}}(B)$ and $B\subseteq \bigotimes_{\mathcal{W}}(A)$. So $$A\subseteq (\bigotimes_{\mathcal{U}}(B)\cup \bigotimes_{\mathcal{V}}(B)\cup \bigotimes_{\mathcal{U}}(C_{1})\cup \bigotimes_{\mathcal{V}}(C_{2}))$$ where $C=\bigcup_{U\in \mathcal{U},V\in \mathcal{V}}(U\cap V)$ and $C_{1}=\bigotimes_{\mathcal{V}}(B)\cap C$ and $C_{2}=\bigotimes_{\mathcal{U}}(B)\cap C$. Since $\mathcal{U}$ and $\mathcal{V}$ are uniformly bounded it can be easily shown that there are subsets $B_{1},B_{2},B_{3},B_{4}$ of $B$ such that they are asymptotically alike to $\bigotimes_{\mathcal{U}}(B), \bigotimes_{\mathcal{V}}(B),\bigotimes_{\mathcal{U}}(C_{1}), \bigotimes_{\mathcal{V}}(C_{2})$, respectively. Thus there exists a subset $D$ of $X$ such that $A\subseteq D$ and $D\lambda B^{\prime}$ where $B^{\prime}=B_{1}\cup B_{2}\cup B_{3}\cup B_{4}$. Therefore $A$ is asymptotically alike to a subset of $B$ and one can similarly show that $B$ is asymptotically alike to a subset of $A$. The combination of these two facts can easily show that $A$ and $B$ are asymptotically alike.
\end{proof}
\begin{example}
Let $\lambda$ be an AS.R on the set $X$. Define $\tilde{\mathfrak{C}}_{\lambda}$ as follows.\\
`$\mathcal{A}\in \tilde{\mathfrak{C}}_{\lambda}$ if there exists a uniformly bounded family $\mathcal{U}$ of subsets of $X$ such that $A\subseteq \bigotimes_{\mathcal{U}}(B)$, for each $A,B\in \mathcal{A}$.'\\
To see the first property of Definition \ref{defasl} it suffices to notice that the family $\mathcal{U}=\{\{x\}\mid x\in X\}$ is uniformly bounded. Property ii) of Definition \ref{defasl} is an evident consequence of the definition of $\tilde{\mathfrak{C}}_{\lambda}$. Suppose that $\mathcal{A},\mathcal{B}\in \tilde{\mathfrak{C}}_{\lambda}$. Assume that $\mathcal{U}$ and $\mathcal{V}$ denote uniformly bounded families of subsets of $X$ such that they have the property mentioned above for $\mathcal{A}$ and $\mathcal{B}$ respectively. If $\mathcal{A}\cap \mathcal{B}\neq \emptyset$ one can easily see that the uniformly bounded family $\mathcal{W}$ mentioned in Lemma \ref{unifb} shows $(\mathcal{A}\cup \mathcal{B})\in \tilde{\mathfrak{C}}_{\lambda}$. Property iv) is a straightforward consequence of the fact $\mathcal{U}\cup \mathcal{V}$ is a uniformly bounded family of subsets of $X$.
\end{example}
\begin{corollary}
Let $\lambda$ be an AS.R on the set $X$. Then,\\
i) $\tilde{\mathfrak{C}}_{\lambda}\subseteq \mathfrak{C}_{\lambda}$.\\
ii) If $\lambda$ is the AS.R induced by the metric $d$ on $X$, then $\tilde{\mathfrak{C}}_{\lambda}=\mathfrak{C}_{d}$.\\
iii) If $\lambda$ is the AS.R induced by the coarse structure $\mathcal{E}$ on $X$, then $\mathfrak{C}_{\mathcal{E}}\subseteq \tilde{\mathfrak{C}}_{\lambda}\subseteq \tilde{\mathfrak{C}}_{\mathcal{E}}=\mathfrak{C}_{\lambda}$.
\end{corollary}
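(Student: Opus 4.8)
The plan is to deduce (i) straight from the definitions, to get (iii) by combining (i) with the identity $\tilde{\mathfrak{C}}_{\mathcal{E}}=\mathfrak{C}_{\lambda_{\mathcal{E}}}$ already established above together with one elementary inclusion, and to prove (ii) by producing explicit uniformly bounded families built from metric balls. For (i): given $\mathcal{A}\in\tilde{\mathfrak{C}}_{\lambda}$, choose a uniformly bounded family $\mathcal{U}$ of subsets of $X$ with $A\subseteq\bigotimes_{\mathcal{U}}(B)$ for all $A,B\in\mathcal{A}$. Since $A$ and $B$ range over the same family $\mathcal{A}$, interchanging them also yields $B\subseteq\bigotimes_{\mathcal{U}}(A)$, and then the defining property of uniform boundedness in the AS.R space $(X,\lambda)$ gives $A\lambda B$; hence $\mathcal{A}\in\mathfrak{C}_{\lambda}$.

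For (iii), put $\lambda=\lambda_{\mathcal{E}}$ and first establish $\mathfrak{C}_{\mathcal{E}}\subseteq\tilde{\mathfrak{C}}_{\lambda}$. Given $\mathcal{A}\in\mathfrak{C}_{\mathcal{E}}$, pick $E\in\mathcal{E}$ with $A\subseteq E(B)$ for all $A,B\in\mathcal{A}$, and replace $E$ by $E\cup\Delta$ so that $\Delta\subseteq E$. Let $\mathcal{U}=\{E(\{x\})\mid x\in X\}$. A short computation with the composition $\circ$ of Definition \ref{defcoarse} shows $\bigotimes_{\mathcal{U}}=E\circ E^{-1}$; in particular $U\times U\subseteq E\circ E^{-1}\in\mathcal{E}$ for every $U\in\mathcal{U}$, and since $\bigotimes_{\mathcal{U}}\subseteq E\circ E^{-1}$, the inclusions $A\subseteq\bigotimes_{\mathcal{U}}(B)$ and $B\subseteq\bigotimes_{\mathcal{U}}(A)$ force $A\subseteq(E\circ E^{-1})(B)$ and $B\subseteq(E\circ E^{-1})(A)$, hence $A\lambda B$; thus $\mathcal{U}$ is uniformly bounded in $(X,\lambda)$. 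Using $\Delta\subseteq E$ one checks $E(B)\subseteq(E\circ E^{-1})(B)=\bigotimes_{\mathcal{U}}(B)$, so $A\subseteq\bigotimes_{\mathcal{U}}(B)$ for all $A,B\in\mathcal{A}$ and $\mathcal{A}\in\tilde{\mathfrak{C}}_{\lambda}$. The remainder of the chain is formal: (i) gives $\tilde{\mathfrak{C}}_{\lambda}\subseteq\mathfrak{C}_{\lambda}$, and the earlier identity reads $\mathfrak{C}_{\lambda}=\tilde{\mathfrak{C}}_{\mathcal{E}}$, so $\mathfrak{C}_{\mathcal{E}}\subseteq\tilde{\mathfrak{C}}_{\lambda}\subseteq\tilde{\mathfrak{C}}_{\mathcal{E}}=\mathfrak{C}_{\lambda}$.

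For (ii), take $\lambda=\lambda_{d}$. If $\mathcal{A}\in\mathfrak{C}_{d}$, fix $k>0$ with $d_{H}(A,B)\leq k$ for all $A,B\in\mathcal{A}$ and let $\mathcal{U}$ consist of all closed $d$-balls of radius $2k$; each has diameter at most $4k$, so $\mathcal{U}$ is uniformly bounded in $(X,\mathcal{E}_{d})$ and hence (by the argument just used for (iii), or by Proposition 6.2 of \cite{me}) in $(X,\lambda_{d})$. Choosing the centre of the ball to be a point of $B$ shows $\{y\mid d(y,B)\leq k\}\subseteq\bigotimes_{\mathcal{U}}(B)$, the radius $2k>k$ being needed only to cope with possibly non-attained infima in $d(y,B)$; therefore $A\subseteq\bigotimes_{\mathcal{U}}(B)$ for all $A,B\in\mathcal{A}$, so $\mathcal{A}\in\tilde{\mathfrak{C}}_{\lambda}$. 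Conversely, if $\mathcal{A}\in\tilde{\mathfrak{C}}_{\lambda}$ is witnessed by a uniformly bounded $\mathcal{U}$, then by Proposition 6.2 of \cite{me} there is $r>0$ with $\operatorname{diam}(U)\leq r$ for all $U\in\mathcal{U}$, so $\bigotimes_{\mathcal{U}}(B)\subseteq\{y\mid d(y,B)\leq r\}$; since $A\subseteq\bigotimes_{\mathcal{U}}(B)$ and (by symmetry) $B\subseteq\bigotimes_{\mathcal{U}}(A)$ for all $A,B\in\mathcal{A}$, we get $d_{H}(A,B)\leq r$ and $\mathcal{A}\in\mathfrak{C}_{d}$. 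I expect the only step that is not pure bookkeeping to be this last passage from ``uniformly bounded in $(X,\lambda_{d})$'' to ``of bounded diameter'', which is precisely Proposition 6.2 of \cite{me}; everywhere else the only pitfalls are to remember to enlarge $E$ so that $\Delta\subseteq E$ and to take the ball radius strictly larger than $k$.
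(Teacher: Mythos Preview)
Your proof is correct and follows essentially the same approach as the paper: the paper simply declares parts (i) and (iii) ``straightforward'' and says part (ii) is ``an immediate consequence of Proposition~6.2 of \cite{me}'', and your argument is precisely a fleshed-out version of that sketch, including the appeal to Proposition~6.2 for the equivalence of the two notions of uniform boundedness in the metric case. The only difference is level of detail, not strategy.
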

\begin{proof}
Parts i) and iii) are straightforward. Part ii) is an immediate consequence of Proposition 6.2 of \cite{me}.
\end{proof}
\begin{definition}
Let $(X,\mathfrak{C})$ be a LS.R space. We call a subset $B$ of $X$ \emph{bounded}, if $B=\emptyset$ or there exists some $x\in X$ such that $\{B,\{x\}\}\in \mathfrak{C}$. We call the LS.R space $(X,\mathfrak{C})$ \emph{connected}, if $\{\{x\},\{y\}\}\in \mathfrak{C}$, for each $x,y\in X$.
\end{definition}
\begin{lemma}\label{bound}
Let $(X,\mathfrak{C})$ be a connected LS.R space. Then the union of two bounded subsets of $X$ is a bounded subset of $X$.
\end{lemma}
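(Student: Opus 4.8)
The plan is to dispose of the degenerate cases and then glue the two boundedness witnesses into a single member of $\mathfrak{C}$ that simultaneously contains the union $B_{1}\cup B_{2}$ and a one-point set. So let $B_{1}$ and $B_{2}$ be bounded subsets of $X$. If $B_{1}=\emptyset$, then $B_{1}\cup B_{2}=B_{2}$ is bounded, and symmetrically if $B_{2}=\emptyset$; hence we may assume both are nonempty and fix $x_{1},x_{2}\in X$ with $\{B_{1},\{x_{1}\}\}\in\mathfrak{C}$ and $\{B_{2},\{x_{2}\}\}\in\mathfrak{C}$.

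The heart of the argument is to use connectedness as a bridge between the two witnesses. Since $(X,\mathfrak{C})$ is connected we have $\{\{x_{1}\},\{x_{2}\}\}\in\mathfrak{C}$. The families $\{B_{1},\{x_{1}\}\}$ and $\{\{x_{1}\},\{x_{2}\}\}$ both contain the set $\{x_{1}\}$, so their intersection is nonempty, and property (iii) of Definition \ref{defasl} gives $\{B_{1},\{x_{1}\},\{x_{2}\}\}\in\mathfrak{C}$. This last family and $\{B_{2},\{x_{2}\}\}$ share the set $\{x_{2}\}$, so a second application of property (iii) yields $\mathcal{D}:=\{B_{1},B_{2},\{x_{1}\},\{x_{2}\}\}\in\mathfrak{C}$.

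To finish I would feed $\mathcal{D}$ into property (iv): $\mathcal{D}\vee\mathcal{D}\in\mathfrak{C}$. By the definition of $\vee$ we have $B_{1}\cup B_{2}\in\mathcal{D}\vee\mathcal{D}$ and $\{x_{1}\}=\{x_{1}\}\cup\{x_{1}\}\in\mathcal{D}\vee\mathcal{D}$, so $\{B_{1}\cup B_{2},\{x_{1}\}\}\subseteq\mathcal{D}\vee\mathcal{D}$, and property (ii) (heredity under inclusion) gives $\{B_{1}\cup B_{2},\{x_{1}\}\}\in\mathfrak{C}$; that is, $B_{1}\cup B_{2}$ is bounded. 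I do not foresee a real obstacle: the only delicate point is organizational, namely choosing the order in which to merge the three known members of $\mathfrak{C}$ so that the hypotheses of (iii) and (iv) are met. It is worth recording that connectedness is invoked exactly once and cannot be dropped, since $\{x_{1}\}$ and $\{x_{2}\}$ are always bounded while $\{x_{1},x_{2}\}$ need not be bounded in a non-connected LS.R space (for instance one induced by a coarse structure whose controlled sets never connect $x_{1}$ to $x_{2}$).
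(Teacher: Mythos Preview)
Your proof is correct and follows essentially the same route as the paper: both arguments use connectedness to bridge the two witnesses via property (iii), then apply property (iv) to produce a family containing both $B_{1}\cup B_{2}$ and a singleton, and finish with (ii). The only cosmetic difference is that the paper keeps two three-element families $\mathcal{A}=\{B_{1},\{x\},\{y\}\}$ and $\mathcal{B}=\{B_{2},\{x\},\{y\}\}$ and forms $\mathcal{A}\vee\mathcal{B}$, whereas you merge everything into a single $\mathcal{D}$ and form $\mathcal{D}\vee\mathcal{D}$.
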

\begin{proof}
Assume that $B_{1},B_{2}\subseteq X$ are bounded. Let $x,y\in X$ be such that\\ $\{B_{1},\{x\}\},\{B_{2},\{y\}\}\in \mathfrak{C}$. Since $(X,\mathfrak{C})$ is connected, $\{\{x\},\{y\}\}\in \mathfrak{C}$. Thus by property iii) of Definition \ref{defasl}, $$\mathcal{A}=\{B_{1},\{x\},\{y\}\},\mathcal{B}=\{B_{2},\{x\},\{y\}\}\in \mathfrak{C}$$ Property iv) of Definition \ref{defasl} shows that $\mathcal{A}\vee \mathcal{B}\in \mathfrak{C}$. Thus property ii) of Definition \ref{defasl} clearly implies that $B_{1}\cup B_{2}$ is bounded.
\end{proof}
\begin{lemma}\label{unb}
Let $(X,\mathfrak{C})$ be a LS.R space and assume that $L\subseteq X$ is unbounded. If $L^{\prime}\subseteq X$ and $L,L^{\prime}\in \mathcal{A}$, for some $\mathcal{A}\in \mathfrak{C}$, then $L^{\prime}$ is also unbounded.
\end{lemma}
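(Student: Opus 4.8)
The plan is to prove the contrapositive: assuming that $L'$ is bounded, I will show that $L$ is also bounded. This is the convenient direction, because boundedness is the existential statement ``there is some $x$ with $\{L',\{x\}\}\in\mathfrak{C}$'', and existential data combines well with the closure properties (ii)--(iv) of Definition \ref{defasl}. Note that no connectedness hypothesis is available here (unlike in Lemma \ref{bound}), so the argument must not use one.

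First I would dispose of the degenerate cases. If $L=\emptyset$ there is nothing to prove, so assume $L\neq\emptyset$ and fix a point $x_{0}\in L$. If $L'=\emptyset$, then $\{L,\emptyset\}\subseteq\mathcal{A}$, so property (ii) gives $\{L,\emptyset\}\in\mathfrak{C}$; since $\{\{x_{0}\}\}\in\mathfrak{C}$ by property (i), property (iv) yields $\{L,\emptyset\}\vee\{\{x_{0}\}\}=\{L\cup\{x_{0}\},\{x_{0}\}\}=\{L,\{x_{0}\}\}\in\mathfrak{C}$, where I used $x_{0}\in L$. Hence $L$ is bounded.

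The main case is $L'\neq\emptyset$. Boundedness of $L'$ provides some $x\in X$ with $\{L',\{x\}\}\in\mathfrak{C}$. Since $\{L,L'\}\subseteq\mathcal{A}\in\mathfrak{C}$, property (ii) gives $\{L,L'\}\in\mathfrak{C}$. Now the families $\{L,L'\}$ and $\{L',\{x\}\}$ share the common element $L'$, so property (iii) gives $\{L,L',\{x\}\}\in\mathfrak{C}$, and one more application of property (ii) gives $\{L,\{x\}\}\in\mathfrak{C}$, i.e.\ $L$ is bounded. Combining the two cases, $L'$ bounded implies $L$ bounded, which is exactly the contrapositive of the assertion.

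I do not expect a genuine obstacle; the argument is pure bookkeeping with the closure axioms. The one point worth attention is the case $L'=\emptyset$: there the ``common element'' trick fails, because $\{\emptyset,\{x\}\}$ need not belong to $\mathfrak{C}$, so instead one must use that $L$ itself already contains a point and absorb a singleton into it via property (iv). Once that case is isolated and handled as above, nothing else is needed.
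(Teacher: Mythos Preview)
Your proof is correct and follows the same route as the paper: argue by contradiction (equivalently, the contrapositive), pass from $\mathcal{A}$ to $\{L,L'\}$ via property (ii), and then use property (iii) with the common element $L'$ to combine $\{L,L'\}$ with $\{L',\{x\}\}$ and extract $\{L,\{x\}\}$. The only difference is that you separately treat the degenerate case $L'=\emptyset$, which the paper's proof tacitly ignores; your handling of that case via property (iv) and a point $x_{0}\in L$ is a nice touch and makes the argument airtight, but otherwise the two proofs are essentially identical.
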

\begin{proof}
Suppose that, contrary to our claim, $L^{\prime}$ is bounded. So there exists some $x\in X$ such that $\mathcal{B}=\{L^{\prime},\{x\}\}\in \mathfrak{C}$. Since $L^{\prime}\in \mathcal{A}\cap \mathcal{B}$, properties ii) and iii) of Definition \ref{defasl} show that $\{L,\{x\}\}\in \mathfrak{C}$, which contradicts our assumption that $L$ is unbounded.
\end{proof}
\begin{definition}
Let $(X,\mathfrak{C})$ and $(Y,\mathfrak{C}^{\prime})$ be two LS.R spaces. We call the map $f:X\rightarrow Y$ a \emph{large scale resemblance mapping} (LS.R mapping) if the inverse image of each bounded subset of $Y$ is a bounded subset of $X$ and $f(\mathcal{A})\in \mathfrak{C}^{\prime}$, for all $\mathcal{A}\in \mathfrak{C}$, where
 $$f(\mathcal{A})=\{f(A)\mid A\in \mathcal{A}\}$$
 We call a LS.R mapping $f:X\rightarrow Y$ a \emph{large scale equivalence} if there exists a LS.R mapping $g:Y\rightarrow X$ such that if $g\circ f(\mathcal{A}) \in \mathfrak{C}$ then $g\circ f(\mathcal{A})\cup \mathcal{A}\in \mathfrak{C}$ and if $f\circ g(\mathcal{B}) \in \mathfrak{C}^{\prime}$ then $f\circ g(\mathcal{B})\cup \mathcal{B}\in \mathfrak{C}^{\prime}$, for all $\mathcal{A}\subseteq \mathcal{P}(X)$ and $\mathcal{B}\subseteq \mathcal{P}(Y)$. In this case we call $g$ a \emph{large scale inverse} of $f$. Two LS.R spaces are called \emph{large scale equivalent} if there exists a large scale equivalence between them.
\end{definition}
\begin{lemma}\label{equi}
Let $(X,\mathfrak{C})$ and $(Y,\mathfrak{C}^{\prime})$ be two LS.R spaces. Assume that $f:X\rightarrow Y$ is a large scale equivalence between $X$ and $Y$ and $g:Y\rightarrow X$ is a large scale inverse of $f$.\\
i) If $f\circ g(\mathcal{B})\in \mathfrak{C}^{\prime}$ then $\mathcal{B}\in \mathfrak{C}^{\prime}$ and if $g\circ f(\mathcal{A})\in \mathfrak{C}$ then $\mathcal{A}\in \mathfrak{C}$, for all $\mathcal{A}\subseteq \mathcal{P}(X)$ and $\mathcal{B}\subseteq \mathcal{P}(Y)$.\\
ii) If $\mathcal{B}\in \mathfrak{C}^{\prime}$, then $f\circ g(\mathcal{B})\cup \mathcal{B}\in \mathfrak{C}^{\prime}$, and if $\mathcal{A}\in \mathfrak{C}$, then $g\circ f(\mathcal{A})\cup \mathcal{A}\in \mathfrak{C}$.
\end{lemma}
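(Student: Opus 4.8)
The plan is to read off both items from three ingredients that are already available: the hypothesis that $f$ and $g$ are LS.R mappings (so each sends a member of its domain LS.R to a member of its codomain LS.R), the defining property of a large scale equivalence, and property ii) of Definition \ref{defasl}, which says that $\mathfrak{C}$ and $\mathfrak{C}'$ are closed under passing to subfamilies. I would also record at the outset the elementary identities $f(g(\mathcal{B})) = f\circ g(\mathcal{B})$ and $g(f(\mathcal{A})) = g\circ f(\mathcal{A})$: iterating the two pushforward operations coincides with the pushforward along the composite map, and this is exactly what lets the clauses of the definition of large scale equivalence be combined with the LS.R mapping property.

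For part i), I would start from the assumption $f\circ g(\mathcal{B}) \in \mathfrak{C}'$. Then the defining property of a large scale equivalence, applied to $\mathcal{B}$, immediately yields $f\circ g(\mathcal{B}) \cup \mathcal{B} \in \mathfrak{C}'$. Since $\mathcal{B} \subseteq f\circ g(\mathcal{B}) \cup \mathcal{B}$, property ii) of Definition \ref{defasl} gives $\mathcal{B} \in \mathfrak{C}'$. The companion statement ``$g\circ f(\mathcal{A}) \in \mathfrak{C}$ implies $\mathcal{A} \in \mathfrak{C}$'' is proved in the same way, interchanging the roles of $f$ and $g$ and of $\mathfrak{C}$ and $\mathfrak{C}'$.

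For part ii), I would start from $\mathcal{B} \in \mathfrak{C}'$. Because $g$ is an LS.R mapping, $g(\mathcal{B}) \in \mathfrak{C}$; because $f$ is an LS.R mapping, $f(g(\mathcal{B})) \in \mathfrak{C}'$, that is, $f\circ g(\mathcal{B}) \in \mathfrak{C}'$. Now part i) (or, directly, the defining property of the large scale equivalence) gives $f\circ g(\mathcal{B}) \cup \mathcal{B} \in \mathfrak{C}'$, which is the desired conclusion. The dual statement for $\mathcal{A} \in \mathfrak{C}$ follows by running the same chain with $f$ and $g$ swapped.

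I do not expect any step here to be a genuine obstacle; the only point deserving a little care is the verification of the identities $f(g(\mathcal{B})) = f\circ g(\mathcal{B})$ and $g(f(\mathcal{A})) = g\circ f(\mathcal{A})$, since the definition of large scale equivalence is phrased in terms of the composite maps $f\circ g$ and $g\circ f$ while the LS.R mapping property is phrased in terms of $f$ and $g$ separately. Once these are in place, the whole argument is a short chase through the definitions.
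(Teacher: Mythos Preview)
Your proposal is correct and is exactly the straightforward unpacking of the definitions that the paper has in mind; the paper's own proof is merely ``Both parts are easy to verify,'' and your argument supplies precisely those easy verifications.
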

\begin{proof}
Both parts are easy to verify.
\end{proof}
\begin{proposition}
Let $(X,d_{1})$ and $(Y,d_{2})$ be two metric spaces. Suppose that $\mathfrak{C}_{1}$ and $\mathfrak{C}_{2}$ are LS.Rs associated to metrics $d_{1}$ and $d_{2}$ on $X$ and $Y$, respectively. Then the map $f:X\rightarrow Y$\\
i) is a LS.R mapping, if and only if, it is a coarse map.\\
ii) is a large scale equivalence, if and only if, it is a coarse equivalence.
\end{proposition}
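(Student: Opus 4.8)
The plan is to reduce both equivalences to the classical descriptions of coarse maps and coarse equivalences between metric spaces. Recall that $f\colon X\to Y$ is a \emph{coarse map} exactly when it is \emph{proper} (the preimage of every bounded set is bounded) and \emph{bornologous} (for each $R>0$ there is $S>0$ with $d_2(f(x),f(x'))\le S$ whenever $d_1(x,x')\le R$), and that $f$ is a \emph{coarse equivalence} exactly when it is a coarse map which is moreover \emph{uniformly expansive} (for each $r>0$ there is $S>0$ with $d_1(x,x')\le S$ whenever $d_2(f(x),f(x'))\le r$) and has \emph{coarsely dense image} ($d_H(f(X),Y)<\infty$). I will use freely that $\mathcal{A}\in\mathfrak{C}_{d}$ means precisely $\sup\{d_H(A,B)\colon A,B\in\mathcal{A}\}<\infty$, that the bounded subsets of $(X,\mathfrak{C}_{d_1})$ are exactly the metrically bounded ones, and that a LS.R mapping carries bounded sets to bounded sets (apply its defining property to the family $\{B,\{x\}\}\in\mathfrak{C}_{d_1}$).

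For part (i), ``coarse map $\Rightarrow$ LS.R mapping'' is the easy direction: properness is the first half of the definition, and if $\mathcal{A}\in\mathfrak{C}_{d_1}$ has pairwise Hausdorff distances $\le k$, then bornologousness with $R=k+1$ produces $S$ with pairwise Hausdorff distances $\le S$ in $f(\mathcal{A})$, so $f(\mathcal{A})\in\mathfrak{C}_{d_2}$. For the converse, assume $f$ is a LS.R mapping that fails to be bornologous, fix $R>0$ and points $x_n,y_n$ with $d_1(x_n,y_n)\le R$ and $d_2(f(x_n),f(y_n))\to\infty$. After passing to a subsequence I may assume $\{f(x_n)\}$ leaves every bounded subset of $Y$ — otherwise a subsequence of $\{f(x_n)\}$ stays in a bounded set $B$, so $\{x_n\}\subseteq f^{-1}(B)$ is bounded, hence $\{y_n\}$, lying within distance $R$ of it, is bounded, hence $\{f(x_n)\}$ and $\{f(y_n)\}$ are bounded, contradicting $d_2(f(x_n),f(y_n))\to\infty$; and then $\{f(y_n)\}$ leaves every bounded set as well, for the same reason. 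Passing to a further subsequence chosen greedily (possible because both image sequences escape to infinity), I arrange that at stage $n$ the points $f(x_n)$ and $f(y_n)$ are at distance at least $n$ from every previously selected point $f(x_m),f(y_m)$ with $m<n$. Now set $A=\{x_n\colon n\ge 1\}$ and $\mathcal{A}=\{A\}\cup\{A\cup\{y_n\}\colon n\ge1\}$. Since $d_1(y_n,x_n)\le R$ we get $d_H(A\cup\{y_n\},A)\le R$ and $d_H(A\cup\{y_n\},A\cup\{y_m\})\le R$, so $\mathcal{A}\in\mathfrak{C}_{d_1}$; but $f(\mathcal{A})=\{f(A)\}\cup\{f(A)\cup\{f(y_n)\}\colon n\ge1\}$, and the greedy spreading forces $d_2(f(y_n),f(x_m))\ge n$ for every $m\ne n$, so $d_2(f(y_n),f(A))\ge\min\bigl(d_2(f(x_n),f(y_n)),\,n\bigr)\to\infty$ and hence $\sup_n d_H\bigl(f(A)\cup\{f(y_n)\},f(A)\bigr)=\infty$, i.e.\ $f(\mathcal{A})\notin\mathfrak{C}_{d_2}$, a contradiction. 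Thus $f$ is bornologous, and together with properness (built into being a LS.R mapping) it is a coarse map.

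For part (ii), ``coarse equivalence $\Rightarrow$ large scale equivalence'' is short. If $g$ is a coarse inverse of $f$, then $g$ is a LS.R mapping by part (i), and with $C=\sup_x d_1(g(f(x)),x)<\infty$ (and similarly $\sup_y d_2(f(g(y)),y)<\infty$) we have $d_H(g\circ f(A),A)\le C$ for every $A\subseteq X$; so if $g\circ f(\mathcal{A})\in\mathfrak{C}_{d_1}$ with pairwise Hausdorff distances $\le k$, the triangle inequality for $d_H$ gives pairwise Hausdorff distances $\le k+2C$ on $g\circ f(\mathcal{A})\cup\mathcal{A}$, so this union lies in $\mathfrak{C}_{d_1}$; the symmetric statement for $f\circ g$ is identical, so $g$ is a large scale inverse of $f$. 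Conversely, let $f$ be a large scale equivalence with large scale inverse $g$. By part (i), $f$ and $g$ are coarse maps; and $\{Y\}\in\mathfrak{C}_{d_2}$ forces $\{f\circ g(Y),Y\}\in\mathfrak{C}_{d_2}$ by Lemma~\ref{equi}, hence $d_H(f(X),Y)\le d_H(f(g(Y)),Y)<\infty$, so $f$ has coarsely dense image. It remains to show $f$ is uniformly expansive. Here I use that $f$ \emph{reflects} $\mathfrak{C}$: if $f(\mathcal{A})\in\mathfrak{C}_{d_2}$ then $g\circ f(\mathcal{A})=g(f(\mathcal{A}))\in\mathfrak{C}_{d_1}$ because $g$ is a LS.R mapping, whence $\mathcal{A}\in\mathfrak{C}_{d_1}$ by Lemma~\ref{equi}. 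If $f$ were not uniformly expansive, pick $r>0$ and $u_n,v_n$ with $d_2(f(u_n),f(v_n))\le r$ and $d_1(u_n,v_n)\to\infty$; running the escape-to-infinity-plus-greedy-spreading argument of part (i) (now $\{u_n\},\{v_n\}$ and their $f$-images all leave every bounded set, and one arranges $d_1(u_n,v_m)\ge n$ for all $m\ne n$), the family $\mathcal{A}=\{A\}\cup\{A\cup\{u_n\}\colon n\ge1\}$ with $A=\{v_m\colon m\ge1\}$ has $d_H(A\cup\{u_n\},A)=d_1(u_n,A)\to\infty$, so $\mathcal{A}\notin\mathfrak{C}_{d_1}$, while $d_2(f(u_n),f(A))\le d_2(f(u_n),f(v_n))\le r$ makes all pairwise Hausdorff distances in $f(\mathcal{A})$ at most $r$, so $f(\mathcal{A})\in\mathfrak{C}_{d_2}$; this contradicts that $f$ reflects $\mathfrak{C}$. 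Hence $f$ is uniformly expansive, and being also bornologous with coarsely dense image it is a coarse equivalence.

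The genuinely routine content is the two easy implications and the triangle-inequality manipulations with $d_H$; the heart of the argument, shared by the converse of (i) and the converse of (ii), is the construction of the witnessing family $\{A\}\cup\{A\cup\{y_n\}\}$ from a single ``bad'' sequence. I expect the main obstacle to be the passage to a suitably ``spread out'' subsequence: one must first use properness (and the fact that LS.R mappings preserve boundedness) to push the relevant images off to infinity, and then choose the terms greedily so that in the image family the point $f(y_n)$ is genuinely far from \emph{all} of $f(A)$ — not merely from $f(x_n)$ — so that the Hausdorff distances in $f(\mathcal{A})$ actually tend to infinity rather than staying bounded because of accidental near-collisions with unrelated points $f(x_m)$.
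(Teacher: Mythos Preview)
Your argument is correct. The paper's own proof is a one-line citation: it invokes Theorem~2.3 and Proposition~2.16 of \cite{me}, which already establish ``AS.R mapping $\Leftrightarrow$ coarse map'' and ``asymptotic equivalence $\Leftrightarrow$ coarse equivalence'' for metric spaces; since a two-element family $\{A,B\}$ lies in $\mathfrak{C}_{d}$ exactly when $A\lambda_{d}B$, the passage between LS.R mappings for $\mathfrak{C}_{d}$ and AS.R mappings for $\lambda_{d}$ is immediate, and the proposition follows. What you do instead is reprove the content of those cited results directly in the LS.R language, via the explicit witnessing family $\{A\}\cup\{A\cup\{y_{n}\}\}$ built from a greedily spread subsequence. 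This is a genuinely different presentation: it is self-contained (no need to consult \cite{me}) at the cost of being considerably longer. Note, incidentally, that the paper records the key tool behind the cited Theorem~2.3 as Lemma~\ref{haus} here; using it, your hard direction in (i) collapses: for every $I\subseteq\mathbb{N}$ the pair $\{x_{i}:i\in I\},\{y_{i}:i\in I\}$ lies in $\mathfrak{C}_{d_{1}}$, hence its image lies in $\mathfrak{C}_{d_{2}}$, and Lemma~\ref{haus} then bounds $d_{2}(f(x_{n}),f(y_{n}))$ uniformly---a contradiction, with no spreading needed. Your greedy construction is essentially an unwinding of that lemma's proof.
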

\begin{proof}
The proof is straightforward by using Theorem 2.3 and Proposition 2.16 of \cite{me}.
\end{proof}
\begin{proposition}
Let $(X,\mathcal{E})$ and $(Y,\mathcal{E}^{\prime})$ be two coarse spaces and assume that\\ $f:X\rightarrow Y$ is a coarse map. Then
$f$ is a LS.R mapping between LS.R spaces $(X,\mathfrak{C}_{\mathcal{E}})$  and $(Y,\mathfrak{C}_{\mathcal{E}^{\prime}})$ (LS.R spaces $(X,\tilde{\mathfrak{C}}_{\mathcal{E}})$ and $(Y,\tilde{\mathfrak{C}}_{\mathcal{E}^{\prime}})$).
\end{proposition}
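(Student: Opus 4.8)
The plan is to check directly the two conditions in the definition of a LS.R mapping — that $f^{-1}$ carries bounded sets to bounded sets, and that $f$ pushes forward members of $\mathfrak{C}_{\mathcal{E}}$ (respectively $\tilde{\mathfrak{C}}_{\mathcal{E}}$) to members of $\mathfrak{C}_{\mathcal{E}^{\prime}}$ (respectively $\tilde{\mathfrak{C}}_{\mathcal{E}^{\prime}}$) — using the two ingredients in the notion of a coarse map: $f$ is \emph{bornologous}, i.e. for every $E\in\mathcal{E}$ there is $E^{\prime}\in\mathcal{E}^{\prime}$ with $\{(f(x),f(y))\mid (x,y)\in E\}\subseteq E^{\prime}$, and $f$ is \emph{proper}, i.e. the inverse image of every subset that is bounded in the coarse space $(Y,\mathcal{E}^{\prime})$ is bounded in $(X,\mathcal{E})$.

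First I would isolate the observation that a nonempty $B\subseteq X$ is bounded in $(X,\mathfrak{C}_{\mathcal{E}})$ if and only if it is bounded in $(X,\tilde{\mathfrak{C}}_{\mathcal{E}})$ if and only if $B\times B\in\mathcal{E}$, and likewise for $Y$. Indeed, for the singleton pair $\{B,\{x\}\}$ there is no difference between the defining conditions of $\mathfrak{C}_{\mathcal{E}}$ and $\tilde{\mathfrak{C}}_{\mathcal{E}}$, and $\{B,\{x\}\}$ lying in either of them means exactly that some $E\in\mathcal{E}$ satisfies $B\subseteq E(\{x\})$ and $\{x\}\subseteq E(B)$; unwinding the definitions of $E^{-1}$ and $\circ$ from Definition \ref{defcoarse} this forces $B\times B\subseteq E\circ E^{-1}\in\mathcal{E}$, while conversely $B\times B\in\mathcal{E}$ lets one take $E=(B\times B)\cup\Delta$ and any $x\in B$. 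Granting this, properness of $f$ gives the first condition at once: if $D\subseteq Y$ is bounded then $f^{-1}(D)$ is bounded in $(X,\mathcal{E})$ — trivially when $D=\emptyset$, and otherwise because $D\times D\in\mathcal{E}^{\prime}$ and $f$ is proper — hence $f^{-1}(D)$ is bounded in $(X,\mathfrak{C}_{\mathcal{E}})$ and in $(X,\tilde{\mathfrak{C}}_{\mathcal{E}})$.

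For the second condition, suppose $\mathcal{A}\in\mathfrak{C}_{\mathcal{E}}$ and choose $E\in\mathcal{E}$ with $A\subseteq E(B)$ for all $A,B\in\mathcal{A}$; by bornologousness pick $E^{\prime}\in\mathcal{E}^{\prime}$ with $\{(f(x),f(y))\mid (x,y)\in E\}\subseteq E^{\prime}$. For $A,B\in\mathcal{A}$ and $a\in A$ there is $b\in B$ with $(b,a)\in E$, hence $(f(b),f(a))\in E^{\prime}$ with $f(b)\in f(B)$, so $f(a)\in E^{\prime}(f(B))$; thus $f(A)\subseteq E^{\prime}(f(B))$ for all $A,B\in\mathcal{A}$ and $f(\mathcal{A})\in\mathfrak{C}_{\mathcal{E}^{\prime}}$. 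The case of $\tilde{\mathfrak{C}}$ is the identical computation performed pair by pair: given two members of $f(\mathcal{A})$, represent them as $f(A),f(B)$ with $A,B\in\mathcal{A}$, take $E\in\mathcal{E}$ with $A\subseteq E(B)$ and $B\subseteq E(A)$, push it forward to some $E^{\prime}\in\mathcal{E}^{\prime}$ as above, and read off $f(A)\subseteq E^{\prime}(f(B))$ and $f(B)\subseteq E^{\prime}(f(A))$.

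I do not expect a real obstacle: once the characterization of bounded sets is in place the proof is a direct unwinding. The only spots demanding care are keeping the composition convention of Definition \ref{defcoarse} consistent when passing between ``$B\subseteq E(\{x\})$'' and ``$B\times B$ is an entourage'', and observing that the bounded sets of $\mathfrak{C}_{\mathcal{E}}$, of $\tilde{\mathfrak{C}}_{\mathcal{E}}$, and of the underlying coarse structure literally coincide, which is what makes properness of the coarse map precisely the hypothesis needed.
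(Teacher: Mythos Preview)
Your proof is correct and is precisely the direct verification the paper has in mind; the paper's own proof is just the single line ``It is easy to verify.'' Your careful identification of the bounded sets of $\mathfrak{C}_{\mathcal{E}}$ and $\tilde{\mathfrak{C}}_{\mathcal{E}}$ with those of the underlying coarse structure, followed by pushing forward entourages via bornologousness, is exactly the intended unwinding.
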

\begin{proof}
It is easy to verify.
\end{proof}
\begin{proposition}
Suppose that $(X,\lambda)$ and $(Y,\lambda^{\prime})$ are two AS.R spaces and $f:X\rightarrow Y$. Then,\\
i) $f$ is a LS.R mapping from $(X,\mathfrak{C}_{\lambda})$ to $(Y,\mathfrak{C}_{\lambda^{\prime}})$, if and only if, it is an AS.R mapping from $(X,\lambda)$ to $(Y,\lambda^{\prime})$.\\
ii) $f$ is a large scale equivalence between LS.R spaces $(X,\mathfrak{C}_{\lambda})$ and $(Y,\mathfrak{C}_{\lambda^{\prime}})$, if and only if, it is an asymptotic equivalence between AS.R spaces $(X,\lambda)$ and $(Y,\lambda^{\prime})$.
\end{proposition}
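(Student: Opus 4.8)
The plan is to work throughout with the description $\mathcal{A}\in\mathfrak{C}_{\lambda}$ if and only if $A\,\lambda\,B$ for all $A,B\in\mathcal{A}$, together with the fact that $\lambda$ is an equivalence relation on $\mathcal{P}(X)$. Recall from \cite{me} that a map $f\colon(X,\lambda)\to(Y,\lambda^{\prime})$ between AS.R spaces is an AS.R mapping exactly when the preimage of every bounded set is bounded and $A\,\lambda\,B$ implies $f(A)\,\lambda^{\prime}\,f(B)$, and that an asymptotic equivalence is an AS.R mapping $f$ admitting an AS.R mapping $g\colon Y\to X$ with $g\circ f(A)\,\lambda\,A$ and $f\circ g(B)\,\lambda^{\prime}\,B$ for all $A\subseteq X$ and $B\subseteq Y$. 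A subset $B\subseteq X$ is bounded in $(X,\mathfrak{C}_{\lambda})$ precisely when it is bounded in $(X,\lambda)$ (both mean $B=\emptyset$ or $B\,\lambda\,\{x\}$ for some $x$), so the boundedness clause is common to both notions of morphism; only the clauses about preserving resemblance need to be matched.

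For part i), I would first treat the direction ``AS.R mapping $\Rightarrow$ LS.R mapping'': if $\mathcal{A}\in\mathfrak{C}_{\lambda}$, then any two members of $f(\mathcal{A})$ have the form $f(A),f(B)$ with $A,B\in\mathcal{A}$, whence $A\,\lambda\,B$ and so $f(A)\,\lambda^{\prime}\,f(B)$; thus $f(\mathcal{A})\in\mathfrak{C}_{\lambda^{\prime}}$. Conversely, if $f$ is a LS.R mapping and $A\,\lambda\,B$, then by reflexivity and symmetry of $\lambda$ every pair of members of the family $\{A,B\}$ is $\lambda$-related, so $\{A,B\}\in\mathfrak{C}_{\lambda}$; applying $f$ gives $\{f(A),f(B)\}\in\mathfrak{C}_{\lambda^{\prime}}$, i.e.\ $f(A)\,\lambda^{\prime}\,f(B)$. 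This proves i).

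For part ii), the forward direction is short: given a large scale equivalence $f$ with large scale inverse $g$, part i) already tells us $f$ and $g$ are AS.R mappings, so it only remains to verify $g\circ f(A)\,\lambda\,A$ and $f\circ g(B)\,\lambda^{\prime}\,B$. The trick is to test the defining implication on singleton families: $g\circ f(\{A\})=\{g(f(A))\}$ is a singleton, hence lies in $\mathfrak{C}_{\lambda}$ by property i) of Definition \ref{defasl}, so the definition of large scale inverse forces $\{g(f(A)),A\}=g\circ f(\{A\})\cup\{A\}\in\mathfrak{C}_{\lambda}$, which says $g\circ f(A)\,\lambda\,A$; the other identity is symmetric. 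Hence $f$ is an asymptotic equivalence.

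For the reverse direction of ii), suppose $f$ is an asymptotic equivalence with asymptotic inverse $g$; by part i), $f$ and $g$ are LS.R mappings, and I must check the implications in the definition of large scale equivalence. Assume $g\circ f(\mathcal{A})\in\mathfrak{C}_{\lambda}$ for some $\mathcal{A}\subseteq\mathcal{P}(X)$; the goal is $g\circ f(\mathcal{A})\cup\mathcal{A}\in\mathfrak{C}_{\lambda}$. For $A,A^{\prime}\in\mathcal{A}$ we have $g\circ f(A)\,\lambda\,g\circ f(A^{\prime})$ by hypothesis, and $g\circ f(A)\,\lambda\,A$, $g\circ f(A^{\prime})\,\lambda\,A^{\prime}$ since $g$ is an asymptotic inverse of $f$; transitivity of $\lambda$ then gives $A\,\lambda\,A^{\prime}$ as well as $g\circ f(A)\,\lambda\,A^{\prime}$, so every pair of members of $g\circ f(\mathcal{A})\cup\mathcal{A}$ is $\lambda$-related and the union belongs to $\mathfrak{C}_{\lambda}$. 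The condition on $f\circ g$ is handled symmetrically using $f\circ g(B)\,\lambda^{\prime}\,B$. I do not expect a genuine obstacle; the only point needing care is this last step, where $g\circ f(\mathcal{A})\in\mathfrak{C}_{\lambda}$ must be treated as an \emph{assumption} and then promoted to the full union via transitivity of $\lambda$ together with the pointwise relations $g\circ f(A)\,\lambda\,A$.
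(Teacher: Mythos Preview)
Your proposal is correct and is precisely the routine unpacking of definitions that the paper has in mind; the paper's own proof reads in full ``It is straightforward.'' Your argument supplies exactly that verification, using nothing beyond the description of $\mathfrak{C}_{\lambda}$, the equivalence-relation properties of $\lambda$, and the singleton trick $\mathcal{A}=\{A\}$ to extract the pointwise relation $g\circ f(A)\,\lambda\,A$ from the large scale inverse condition.
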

\begin{proof}
It is straightforward.
\end{proof}
From now on we assume all LS.R spaces are connected.
\section{Nearness structures induced from large scale resemblances}\label{s3}
\begin{example}
Assume that $X$ is a dense subspace of the topological space $Y$. Define the subset $\mathfrak{C}_Y$ of $\mathcal{P}(\mathcal{P}(X))$ as follows.\\
`$\mathcal{A}\in \mathfrak{C}_{Y}$ if $\overline{A}\cap (Y\setminus X)=\overline{B}\cap (Y\setminus X)$, for each $A,B\in \mathcal{A}$.'\\
It is straightforward to show that $\mathfrak{C}_{Y}$ is a LS.R on $X$. We call $\mathfrak{C}_{Y}$ the \emph{topological LS.R} on $X$ associated to $Y$.
\end{example}
\begin{proposition}\label{subset}
Let $\mathcal{E}$ be a proper and compatible coarse structure on the topological space $X$ and assume that $hX$ denotes the Higson compactification of $X$. Then $$\mathfrak{C}_{\mathcal{E}},\tilde{\mathfrak{C}}_{\mathcal{E}}\subseteq \mathfrak{C}_{hX}$$
\end{proposition}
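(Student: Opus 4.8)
The plan is to reduce the statement to one clean fact about the Higson compactification and then to derive that fact by a short net chase based on the description of the topological coarse structure in Example~\ref{tt}. By the Corollary stated after Example~\ref{c1} we have $\mathfrak{C}_{\mathcal{E}}\subseteq\tilde{\mathfrak{C}}_{\mathcal{E}}$, so it is enough to prove $\tilde{\mathfrak{C}}_{\mathcal{E}}\subseteq\mathfrak{C}_{hX}$. Unwinding the definitions, $\mathcal{A}\in\tilde{\mathfrak{C}}_{\mathcal{E}}$ means that for each pair $A,B\in\mathcal{A}$ there is some $E\in\mathcal{E}$ with $A\subseteq E(B)$ and $B\subseteq E(A)$, whereas $\mathcal{A}\in\mathfrak{C}_{hX}$ means that $\overline{A}\cap\nu X=\overline{B}\cap\nu X$ for each pair $A,B\in\mathcal{A}$ (closures taken in $hX$, and $\nu X=hX\setminus X$). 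Hence everything follows once we prove the claim: \emph{if $A,B\subseteq X$ and there is $E\in\mathcal{E}$ with $A\subseteq E(B)$ and $B\subseteq E(A)$, then $\overline{A}\cap\nu X=\overline{B}\cap\nu X$.}

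To prove the claim, by symmetry it suffices to show that every $\omega\in\overline{A}\cap\nu X$ lies in $\overline{B}$. By Proposition~\ref{hig} the coarse structure $\mathcal{E}$ is contained in the topological coarse structure associated to $hX$; since a coarse structure is closed under taking inverses, $E^{-1}\in\mathcal{E}$ also belongs to that topological coarse structure. Choose a net $(a_{\alpha})$ in $A$ with $a_{\alpha}\to\omega$ in $hX$. For each $\alpha$, from $a_{\alpha}\in A\subseteq E(B)$ we obtain some $b_{\alpha}\in B$ with $(b_{\alpha},a_{\alpha})\in E$, hence $(a_{\alpha},b_{\alpha})\in E^{-1}$. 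Thus $\bigl((a_{\alpha},b_{\alpha})\bigr)_{\alpha}$ is a net in $E^{-1}$ whose first coordinates converge to $\omega\in hX\setminus X$, so the defining property of the topological coarse structure associated to $hX$ (the net condition in Example~\ref{tt}) forces $b_{\alpha}\to\omega$. As $b_{\alpha}\in B$ for every $\alpha$, this gives $\omega\in\overline{B}$, and since $\omega\in\nu X$ we conclude $\omega\in\overline{B}\cap\nu X$. Exchanging the roles of $A$ and $B$ (using $B\subseteq E(A)$) yields the reverse inclusion, which proves the claim, and with it the proposition.

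The only point requiring care is the assertion that the entourage witnessing "$A$ and $B$ are close" genuinely belongs to the topological coarse structure of $hX$; this is exactly where the hypotheses that $\mathcal{E}$ is proper and compatible are used, through Proposition~\ref{hig}. Once that is in hand the argument is a direct net chase, and it needs neither Higson functions nor the proximity of Definition~\ref{pad}; indeed the same computation simultaneously handles $\mathfrak{C}_{\mathcal{E}}$, since a member of $\mathfrak{C}_{\mathcal{E}}$ is in particular a member of $\tilde{\mathfrak{C}}_{\mathcal{E}}$.
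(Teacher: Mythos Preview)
Your argument is essentially the same as the paper's: both reduce to the claim that $A\subseteq E(B)$ and $B\subseteq E(A)$ force $\overline{A}\cap\nu X=\overline{B}\cap\nu X$, and both prove it by a net chase using Proposition~\ref{hig} together with the defining net condition of the topological coarse structure. Your version is in fact slightly more careful, since you explicitly pass to $E^{-1}$ to match the orientation in Example~\ref{tt}, whereas the paper applies the net condition to $E$ directly.
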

\begin{proof}
Let $A,B\subseteq X$ and assume that $A\subseteq E(B)$ and $B\subseteq E(A)$, for some $E\in \mathcal{E}$. Let $w\in (\overline{A}\cap \nu X)$. Then there exists a net $(x_{\alpha})_{\alpha \in I}$ in $A$ such that $x_{\alpha}\rightarrow w$. Since $A\subseteq E(B)$, there exists some $y_{\alpha}\in B$ such that $(y_{\alpha},x_{\alpha})\in E$, for each $\alpha \in I$. By Proposition \ref{hig}, $E$ is a member of the topological coarse structure associated to $hX$. Therefore $y_{\alpha}\rightarrow w$ by definition of the topological coarse structure. It shows that $w\in (\overline{B}\cap \nu X)$. Thus $\overline{A}\cap \nu X\subseteq \overline{B}\cap \nu X$. Similarly one can show that $\overline{B}\cap \nu X\subseteq \overline{A}\cap \nu X$. Therefore $\overline{A}\cap \nu X= \overline{B}\cap \nu X$. By using this fact, it is straightforward to show the claim of this proposition.
\end{proof}
\begin{proposition}
Let $d$ be a metric on the set $X$ and let $\lambda=\lambda_{d}$. Then $\mathfrak{C}_{\lambda}=\mathfrak{C}_{hX}$.
\end{proposition}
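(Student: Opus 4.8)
The plan is to prove the two inclusions $\mathfrak{C}_{\lambda}\subseteq\mathfrak{C}_{hX}$ and $\mathfrak{C}_{hX}\subseteq\mathfrak{C}_{\lambda}$ separately. Observe first that $\mathfrak{C}_{hX}$ is meaningful only because $hX$ exists, i.e.\ because the metric coarse structure $\mathcal{E}_{d}$ is proper and compatible (metric topologies are normal, and the metric is implicitly proper), so Propositions \ref{hig} and \ref{subset} apply directly with $\mathcal{E}=\mathcal{E}_{d}$; throughout, write $\nu X=hX\setminus X$ and take all closures in $hX$. For the inclusion $\mathfrak{C}_{\lambda}\subseteq\mathfrak{C}_{hX}$ I would simply note that $\lambda=\lambda_{d}=\lambda_{\mathcal{E}_{d}}$ and recall that $\mathfrak{C}_{\lambda_{\mathcal{E}}}=\tilde{\mathfrak{C}}_{\mathcal{E}}$ for every coarse structure $\mathcal{E}$ (one of the corollaries above), so $\mathfrak{C}_{\lambda}=\tilde{\mathfrak{C}}_{\mathcal{E}_{d}}\subseteq\mathfrak{C}_{hX}$ by Proposition \ref{subset}. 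This half is routine.

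For the reverse inclusion, let $\mathcal{A}\in\mathfrak{C}_{hX}$ and fix nonempty $A,B\in\mathcal{A}$, so $\overline{A}\cap\nu X=\overline{B}\cap\nu X$; the goal is $A\,\lambda_{d}\,B$, i.e.\ $d_{H}(A,B)<\infty$. I would first observe that this common-trace condition forces $A$ and $B$ to be simultaneously bounded or unbounded: if $B$ is bounded then properness makes $\overline{B}$ a compact subset of $X$, hence $\overline{B}\cap\nu X=\emptyset=\overline{A}\cap\nu X$ and $A$ is bounded too, and any two bounded subsets of a metric space lie in a common ball and so have finite Hausdorff distance. So assume both are unbounded and, for contradiction, $d_{H}(A,B)=\infty$, say $\sup_{a\in A}d(a,B)=\infty$. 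The key step is to manufacture a Higson function on $X$ separating $A$ from $B$ at infinity. I would pick $a_{k}\in A$ with $R_{k}:=d(a_{k},B)\to\infty$ and, after passing to a subsequence along which $R_{k}>2R_{j}$ for all $j<k$, use the triangle-inequality bound $d(a_{k},a_{j})\ge R_{k}-R_{j}$ to force the balls $U_{k}=\{x:d(x,a_{k})<R_{k}/3\}$ to be pairwise disjoint. Then define $f\colon X\to[0,1]$ by $f(x)=\max\{0,\,1-3d(x,a_{k})/R_{k}\}$ for $x\in U_{k}$ and $f(x)=0$ otherwise (well defined by disjointness). A short estimate---noting that if $d(x,y)\le R$ and $y\notin U_{k}$ then $d(x,a_{k})>R_{k}/3-R$, so $f(x)<3R/R_{k}$, together with the fact that any fixed bounded set meets only finitely many $U_{k}$---shows $\sup\{|f(x)-f(y)|:d(x,y)\le R\}\to 0$ as $x,y\to\infty$ for each fixed $R$; with boundedness this makes $f$ a Higson function, so it extends continuously to $hX$. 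Since $f\equiv 1$ on $\{a_{k}\}$ and $f\equiv 0$ on $B$, continuity forces $f\equiv 1$ on $\overline{\{a_{k}\}}$ and $f\equiv 0$ on $\overline{B}$. But $\{a_{k}\}$ is unbounded, so by properness $\overline{\{a_{k}\}}\cap\nu X\neq\emptyset$, and any $w$ in it satisfies $w\in\overline{A}\cap\nu X=\overline{B}\cap\nu X\subseteq\overline{B}$, whence $1=f(w)=0$. This contradiction yields $d_{H}(A,B)<\infty$, i.e.\ $A\,\lambda_{d}\,B$, and as $A,B\in\mathcal{A}$ were arbitrary, $\mathcal{A}\in\mathfrak{C}_{\lambda}$.

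The routine part is the first inclusion; the real content is the Higson-function construction in the second, and the main obstacle there will be verifying the slow-oscillation property of $f$---which is exactly why the escaping sequence $(a_{k})$ must be thinned out so aggressively, so that the height-one tents of width comparable to $R_{k}$ have pairwise disjoint supports and $f$ cannot exhibit a large jump along a short step far out in $X$. As an alternative I would expect this inclusion to follow also from the identification in \cite{me} of the Smirnov compactification of the proximity $\delta$ of Definition \ref{pad} with $hX$, under which $A\,\lambda_{d}\,B$ corresponds precisely to $\overline{A}$ and $\overline{B}$ having the same trace on $\nu X$; but the argument sketched above has the advantage of being self-contained.
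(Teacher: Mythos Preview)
Your argument is correct. The paper, however, does not prove this proposition directly at all: its entire proof is the sentence ``It is a direct consequence of Proposition 4.22 and Corollary 4.24 of \cite{me}.'' Those results from \cite{me} identify the Higson compactification with the Smirnov compactification of the proximity $\delta$ from Definition~\ref{pad} and show that $A\lambda_{d}B$ holds precisely when $\overline{A}$ and $\overline{B}$ have the same trace on $\nu X$ --- exactly the alternative route you mention in your last paragraph.

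So the difference is one of packaging. The paper outsources the hard inclusion $\mathfrak{C}_{hX}\subseteq\mathfrak{C}_{\lambda}$ to the machinery of \cite{me}; you instead build a slowly oscillating tent function by hand to separate an escaping sequence in $A$ from $B$ on the Higson corona. Your approach is longer but self-contained and makes the analytic content visible (the thinning $R_{k}>2R_{j}$ is precisely what is needed to get disjoint supports and hence the Higson condition), whereas the paper's citation hides this inside the proximity--compactification correspondence. For the easy inclusion $\mathfrak{C}_{\lambda}\subseteq\mathfrak{C}_{hX}$ you use Proposition~\ref{subset} together with the identity $\mathfrak{C}_{\lambda}=\tilde{\mathfrak{C}}_{\mathcal{E}_{d}}$, which is exactly in the spirit of the paper. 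One small remark: in the bounded case your sentence ``any two bounded subsets of a metric space \ldots have finite Hausdorff distance'' silently uses that $A,B$ are nonempty, which you did assume; the empty-set edge case is a wrinkle in the statement itself rather than in your proof.
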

\begin{proof}
It is a direct consequence of Proposition 4.22 and Corollary 4.24 of \cite{me}.
\end{proof}
\begin{example}\label{count1}
Let $X=\mathbb{R}$ and assume $X$ with the discrete topology. Suppose that $Y$ is the one point compactification of $X$. Clearly $\mathcal{A}\in \mathfrak{C}_{Y}$, if and only if, all members of $\mathcal{A}$ are finite or all members of $\mathcal{A}$ are infinite. We claim that $\mathfrak{C}_{Y}\neq \mathfrak{C}_{\mathcal{E}}$, for all coarse structure $\mathcal{E}$ on $X$. Suppose that, contrary to our claim, $\mathcal{E}$ is a coarse structure on $X$ and $\mathfrak{C}_{Y}= \mathfrak{C}_{\mathcal{E}}$. Clearly $\{\{x\},E(x)\}\in \mathfrak{C}_{Y}=\mathfrak{C}_{\mathcal{E}}$, for all $E\in \mathcal{E}$ and $x\in X$. Thus $E(x)$ is finite for all $E\in \mathcal{E}$ and $x\in X$. Since $\{\mathbb{N},\mathbb{R}\}\in \mathfrak{C}_{Y}= \mathfrak{C}_{\mathcal{E}}$, there exists some $E\in \mathcal{E}$ such that $\mathbb{R}\subseteq E(\mathbb{N})$ and $\mathbb{N}\subseteq E(\mathbb{R})$. So $\mathbb{R}\subseteq \bigcup_{n\in \mathbb{N}}E(n)$. Since $E(n)$ is finite for all $n\in \mathbb{N}$, $\bigcup_{n\in \mathbb{N}}E(n)$ is countable and this contradicts the fact that $\mathbb{R}$ is uncountable. Therefore $\mathfrak{C}_{Y}\neq \mathfrak{C}_{\mathcal{E}}$ as we claimed. Completely similar argument can show that $\mathfrak{C}_{Y}\neq \tilde{\mathfrak{C}}_{\mathcal{E}}$, for all coarse structure $\mathcal{E}$ on $X$.
\end{example}
\begin{example}
Suppose that $X=\mathbb{R}$ and consider $X$ with the discrete topology as Example \ref{count1}. Define $\mathcal{E}\subseteq X\times X$ as follows.\\
`$E\in \mathcal{E}$ if $E(x)$ is finite, for each $x\in X$.'\\
It is straightforward to show that $\mathcal{E}$ is a proper and compatible coarse structure on the topological space $X$. Since two subsets $A$ and $B$ of $X$ are asymptotically disjoint, if and only if, one of them is finite, it is easy to see that $(X,\lambda_{\mathcal{E}})$ is asymptotically normal. Assume that $\mathcal{C}$ and $\mathcal{C}^{\prime}$ are two clusters in $hX\setminus X$ and $\mathcal{C}\neq \mathcal{C}^{\prime}$ (Definition \ref{pad}). Then there are infinite subsets $A\in \mathcal{C}$ and $B\in \mathcal{C}^{\prime}$ such that they are asymptotically disjoint, a contradiction. Therefore the Higson compactification of $X$ is the one point compactification. By Example \ref{count1} and Proposition \ref{subset}, $\mathfrak{C}_{\mathcal{E}}$ and $\tilde{\mathfrak{C}}_{\mathcal{E}}$ are proper subsets of $\mathfrak{C}_{hX}$.
\end{example}
\begin{definition}\label{nl}
Suppose that $\mathfrak{C}$ is a LS.R on the set $X$. Define the subset $\mathfrak{N}_{\mathfrak{C}}$ of $\mathcal{P}(\mathcal{P}(X))$ as follows.\\
`$\mathcal{A}\in \mathfrak{N}_{\mathfrak{C}}$ if $\bigcap_{A\in \mathcal{A}}\overline{A}\neq \emptyset$, or there exists some $\mathcal{B}\in \mathfrak{C}$ such that $\mathcal{B}$ does not contain any bounded subsets of $X$ and $\mathcal{B}\ll \mathcal{A}$.'
\end{definition}
Before going further let us set up a notation. Let $\mathcal{E}$ be a coarse structure on the set $X$ and $E\in \mathcal{E}$. For a subset $L$ of $X$, we denote the family of all $L^{\prime}\subseteq X$ such that $L\subseteq E(L^{\prime})$ and $L^{\prime}\subseteq E(L)$, by $\mathcal{N}_{E}(L)$.
\begin{corollary}
Let $(X,\mathcal{E})$ be a coarse space. Then $\mathcal{N}_{E}(L)\in \mathfrak{C}_{\mathcal{E}}$, for each $E\in \mathcal{E}$ and $L\subseteq X$. If $\mathcal{A}\in \mathfrak{C}_{\mathcal{E}}$ then there exists some $E\in \mathcal{E}$ such that $\mathcal{A}\subseteq \mathcal{N}_{E}(L)$, for all $L\in \mathcal{A}$.
\end{corollary}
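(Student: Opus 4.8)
The plan is to simply unwind the two definitions involved and chase a couple of inclusions through the composition and evaluation operations on the coarse structure; no deep idea is needed here, so I would keep the write-up short.

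For the first assertion, I would fix $E\in\mathcal{E}$ and $L\subseteq X$ and take arbitrary $A,B\in\mathcal{N}_{E}(L)$. By the definition of $\mathcal{N}_{E}(L)$ we have $A\subseteq E(L)$ and $L\subseteq E(B)$, hence $A\subseteq E(E(B))$. Using the elementary identity $E(F(S))=(E\circ F)(S)$, which is immediate from the definitions of $E\circ F$ and of $F(S)$ in Definition \ref{defcoarse}, this reads $A\subseteq (E\circ E)(B)$. Since $E\circ E\in\mathcal{E}$ by axiom ii) of a coarse structure, the single set $F:=E\circ E$ witnesses, via the characterization of $\mathfrak{C}_{\mathcal{E}}$ given in Example \ref{c1}, that $\mathcal{N}_{E}(L)\in\mathfrak{C}_{\mathcal{E}}$. (If $\mathcal{N}_{E}(L)=\emptyset$ the condition holds vacuously and $\emptyset\in\mathfrak{C}_{\mathcal{E}}$ by axiom ii) of Definition \ref{defasl}.)

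For the second assertion, suppose $\mathcal{A}\in\mathfrak{C}_{\mathcal{E}}$ and choose, by Example \ref{c1}, some $E\in\mathcal{E}$ with $A\subseteq E(B)$ for all $A,B\in\mathcal{A}$. Then for a fixed $L\in\mathcal{A}$ and an arbitrary $A\in\mathcal{A}$, applying this to the ordered pair $(A,L)$ gives $A\subseteq E(L)$, and applying it to $(L,A)$ gives $L\subseteq E(A)$; together these say exactly that $A\in\mathcal{N}_{E}(L)$. Hence $\mathcal{A}\subseteq\mathcal{N}_{E}(L)$ for every $L\in\mathcal{A}$, with one and the same $E$, as required.

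The only place where I would be careful is keeping the composition/evaluation convention consistent, so that the chain $A\subseteq E(L)\subseteq (E\circ E)(B)$ is genuinely valid and does not secretly require $E^{-1}$ or $F\circ E$ in place of $E\circ E$. If one prefers a fully symmetric argument one could replace $E$ by $E\cup E^{-1}\in\mathcal{E}$ at the outset, but this is not necessary since the defining condition of $\mathfrak{C}_{\mathcal{E}}$ already quantifies over all ordered pairs from the family. Beyond that bookkeeping there is no real obstacle.
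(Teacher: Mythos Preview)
Your proof is correct and follows exactly the same approach as the paper: the paper's proof consists of the single observation that $A,B\in\mathcal{N}_{E}(L)$ implies $A\subseteq (E\circ E)(B)$ and $B\subseteq (E\circ E)(A)$, and then declares the rest straightforward. You have simply written out that ``straightforward'' part in full, including the verification of $E(F(S))=(E\circ F)(S)$ and the second assertion, which the paper leaves entirely to the reader.
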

\begin{proof}
Notice that if $A,B\in \mathcal{N}_{E}(L)$, then $A\subseteq E\circ E(B)$ and $B\subseteq E\circ E(A)$. By using this fact the proof of this corollary is straightforward.
\end{proof}
\begin{theorem}
Let $\mathcal{E}$ be a coarse structure on the topological space $X$ and let $\mathfrak{C}=\mathfrak{C}_{\mathcal{E}}$. Then $\mathfrak{N}_{\mathfrak{C}}$ is a nearness structure on $X$ and it induces the same topology to the topology of $X$.
\end{theorem}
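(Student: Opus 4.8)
The plan is to reduce the whole statement to one clean reformulation of membership in $\mathfrak{N}_{\mathfrak{C}}$, after which every axiom and the topology claim become formal. Precisely, I would first prove the

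\emph{Claim.} For $\mathcal{A}\subseteq\mathcal{P}(X)$ one has $\mathcal{A}\in\mathfrak{N}_{\mathfrak{C}}$ iff $\bigcap_{A\in\mathcal{A}}\overline{A}\neq\emptyset$, or there is $E\in\mathcal{E}$ such that $\bigcap_{A\in\mathcal{A}}E(A)$ is an unbounded subset of $(X,\mathfrak{C})$.

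The forward direction is basically unwinding the definition: if $\mathcal{B}\in\mathfrak{C}_{\mathcal{E}}$ has no bounded member and $\mathcal{B}\ll\mathcal{A}$, replace $\mathcal{B}$ by the subfamily $\{B_A:A\in\mathcal{A}\}$ with $B_A\in\mathcal{B}$, $B_A\subseteq A$; enlarging its controlling set to $E_0:=E_0\cup E_0^{-1}\cup\Delta$ we get a symmetric $E_0\in\mathcal{E}$ with $\Delta\subseteq E_0$, and then for a fixed $A_0$ the set $M:=B_{A_0}$ is unbounded and satisfies $M\subseteq E_0(B_A)\subseteq E_0(A)$ for all $A$, so $M\subseteq\bigcap_A E_0(A)$. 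For the converse, given $E$ with $M:=\bigcap_A E(A)$ unbounded, put $E':=E\cup E^{-1}\cup\Delta\in\mathcal{E}$ and $D_A:=A\cap E'(M)$; then $D_A\subseteq A$, $D_A\subseteq E'(M)$, and since $M\subseteq E'(A)$ and $E'$ is symmetric one checks $M\subseteq E'(D_A)$. Hence $\{D_A:A\in\mathcal{A}\}$ is controlled by $E'\circ E'$, so it lies in $\mathfrak{C}_{\mathcal{E}}$, has no bounded member (if some $D_A$ were bounded, $M\subseteq E'(D_A)$ would be bounded), and $\ll$-refines $\mathcal{A}$. I expect this converse to be the only non-formal step of the whole proof — short, but it is where the actual structure of $\mathfrak{C}_{\mathcal{E}}$ is used.

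Granting the Claim, the nearness axioms follow. Axiom (i) is immediate from $A\subseteq\overline{A}$. Axiom (ii): if $\mathcal{A}\ll\mathcal{B}$ then each $B$ contains some $A\in\mathcal{A}$, so $x\in\bigcap_A\overline{A}$ forces $x\in\bigcap_B\overline{B}$, and $\bigcap_B E(B)\supseteq\bigcap_A E(A)$, which is unbounded if $\bigcap_A E(A)$ is. Axiom (iii): if $\emptyset\in\mathcal{A}$ then $\bigcap_A\overline{A}\subseteq\overline{\emptyset}=\emptyset$ and $\bigcap_A E(A)\subseteq E(\emptyset)=\emptyset$, so $\mathcal{A}\notin\mathfrak{N}_{\mathfrak{C}}$. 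For axiom (iv) apply the Claim to $\mathcal{A}\vee\mathcal{B}$, using $\overline{A\cup B}=\overline{A}\cup\overline{B}$, $E(A\cup B)=E(A)\cup E(B)$, and the distributive identities
$$\bigcap_{A,B}\big(\overline{A}\cup\overline{B}\big)=\Big(\bigcap_{A}\overline{A}\Big)\cup\Big(\bigcap_{B}\overline{B}\Big),\qquad \bigcap_{A,B}\big(E(A)\cup E(B)\big)=\Big(\bigcap_{A}E(A)\Big)\cup\Big(\bigcap_{B}E(B)\Big).$$
Since a union of two empty sets is empty and a union of two bounded sets is bounded (Lemma \ref{bound}), the right-hand sides are nonempty, resp. unbounded, iff one of the two pieces is; by the Claim this says exactly that $\mathcal{A}\vee\mathcal{B}\in\mathfrak{N}_{\mathfrak{C}}$ iff $\mathcal{A}\in\mathfrak{N}_{\mathfrak{C}}$ or $\mathcal{B}\in\mathfrak{N}_{\mathfrak{C}}$, which yields (iv) in its contrapositive form (and a bit more).

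For the topology: since $(X,\mathfrak{C})$ is connected every singleton is bounded, and consequently $E(\{x\})$ is bounded for every $E\in\mathcal{E}$ (it has bounded ``diameter'', $E(\{x\})\times E(\{x\})\subseteq E\circ E^{-1}\in\mathcal{E}$, which forces boundedness). Applying the Claim to $\{\{x\},A\}$, the coarse alternative would require $E(\{x\})\cap E(A)$ to be unbounded, which is impossible; hence $\{x,A\}\in\mathfrak{N}_{\mathfrak{C}}$ iff $\overline{\{x\}}\cap\overline{A}\neq\emptyset$. Finally, since $X$ is $R_{0}$, the condition $\overline{\{x\}}\cap\overline{A}\neq\emptyset$ is equivalent to $x\in\overline{A}$, so the closure operator of the topology induced by $\mathfrak{N}_{\mathfrak{C}}$ coincides with that of $X$. (The one delicate point here is this last equivalence, which genuinely uses the $R_{0}$ property of $X$.)
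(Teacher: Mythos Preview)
Your proof is correct, and it takes a genuinely different route from the paper's. The paper verifies axiom~(iv) directly: assuming $\mathcal{A},\mathcal{B}\notin\mathfrak{N}_{\mathfrak{C}}$ and given $\mathcal{C}\in\mathfrak{C}$ with $\mathcal{C}\ll\mathcal{A}\vee\mathcal{B}$, it fixes $L\in\mathcal{C}$ and a controlling $E$, then builds auxiliary families $L_B^A=\{b_a: a\in L\setminus E(A)\}$, $L_B=\bigcup_A L_B^A$, and $R_A$, showing each lies in $\mathfrak{C}$ and $\ll$-refines $\mathcal{B}$ or $\mathcal{A}$, hence has only bounded members, and finally deduces that $L$ itself is bounded. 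Your approach instead isolates all the coarse content in the single Claim
\[
\mathcal{A}\in\mathfrak{N}_{\mathfrak{C}}\iff \bigcap_{A}\overline{A}\neq\emptyset\ \text{or}\ \exists E\in\mathcal{E}:\ \bigcap_A E(A)\ \text{is unbounded},
\]
after which axiom~(iv) drops out of the set-theoretic identity $\bigcap_{A,B}(P_A\cup Q_B)=(\bigcap_A P_A)\cup(\bigcap_B Q_B)$ together with Lemma~\ref{bound}. This buys you a cleaner, more modular argument and actually yields the two-sided equivalence $\mathcal{A}\vee\mathcal{B}\in\mathfrak{N}_{\mathfrak{C}}\iff\mathcal{A}\in\mathfrak{N}_{\mathfrak{C}}$ or $\mathcal{B}\in\mathfrak{N}_{\mathfrak{C}}$, which is stronger than what axiom~(iv) asks. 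The paper's argument, by contrast, stays closer to the raw definition of $\mathfrak{N}_{\mathfrak{C}}$ and never needs to name the set $\bigcap_A E(A)$ explicitly. One small remark: in the forward direction of your Claim you tacitly use that supersets of unbounded sets are unbounded, which in $\mathfrak{C}_{\mathcal{E}}$ follows from the standing connectedness assumption (bounded $\Leftrightarrow$ contained in some $E(x)$); and your observation about the $R_0$ hypothesis in the topology clause is well taken --- the paper simply declares that part ``evident''.
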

\begin{proof}
 Properties i), ii) and iii) of Definition \ref{near} are easy to prove. We are going to prove the property iv) of Definition \ref{near}. Suppose that $\mathcal{A},\mathcal{B}\notin \mathfrak{N}_{\mathfrak{C}}$. Thus $\bigcap_{A\in \mathcal{A}}\overline{A}=\bigcap_{B\in \mathcal{B}}\overline{B}=\emptyset$ and this easily shows that $\bigcap_{C\in \mathcal{A}\vee \mathcal{B}}\overline{C}=\emptyset$. Suppose that there exists some $\mathcal{C}\in \mathfrak{C}$ such that $\mathcal{C}\ll \mathcal{A}\vee \mathcal{B}$. Let $L$ be an arbitrary element of $\mathcal{C}$. Assume that $E\in \mathcal{E}$ is such that $E=E^{-1}$ and $\mathcal{C}\subseteq \mathcal{N}_{E}(L)$. Let $A\in \mathcal{A}$ and $B\in \mathcal{B}$. Since there exists some $K\subseteq (A\cup B)$ such that $L\subseteq E(K)$, if $a\in L\setminus E(A)$ then we can choose some $b_{a}\in B$ such that $(a,b_{a})\in E$. Let $L_{B}^{A}=\{b_{a}\mid a\in L\setminus E(A)\}$. Let $L_{B}=\bigcup_{A\in \mathcal{A}} L_{B}^{A}$, for each $B\in \mathcal{B}$. We have $L_{B}^{A}\subseteq E(L\setminus E(A))$ and $(L\setminus E(A))\subseteq E(L_{C}^{A})$, for all $A\in \mathcal{A}$ and $B,C\in \mathcal{B}$. Thus $L_{B}^{A}\subseteq E\circ E(L_{C}^{A})$, for all $A\in \mathcal{A}$ and $B,C\in \mathcal{B}$. It shows that $L_{B}\subseteq E\circ E(L_{C})$, for all $B,C\in \mathcal{B}$. Therefore $\tilde{\mathcal{C}}=\{L_{B}\mid B\in \mathcal{B}\}\in \mathfrak{C}$. Since we assumed that $\mathcal{B}\notin \mathfrak{N}_{\mathfrak{C}}$, Lemma \ref{unb} shows that $\tilde{\mathcal{C}}$ does not contain any unbounded subset of $(X,\mathfrak{C})$. Thus $L_{B}$ is bounded for each $B\in \mathcal{B}$. Now, fix $B\in \mathcal{B}$. Suppose that $A\in \mathcal{A}$. Since $A\cup B$ has a subset in $\mathcal{N}_{E}(L)$, if $a\in (L\setminus E(L_{B}))$ then we can choose some $c_{a}\in A$ such that $(a,c_{a})\in E$. Let $R_{A}=\{c_{a}\mid a\in (L\setminus E(L_{B}))\}$. Clearly $R_{A}\subseteq E(L\setminus E(L_{B}))$ and $(L\setminus E(L_{B}))\subseteq E(R_{C})$ and thus $R_{A}\subseteq E\circ E(R_{C})$, for all $A,C\in \mathcal{A}$. It implies that $\mathcal{D}=\{R_{A}\mid A\in \mathcal{A}\}\in \mathfrak{C}$. Since $\mathcal{D}\ll \mathcal{A}$ and $\mathcal{A}\notin \mathfrak{N}_{\mathfrak{C}}$, $R_{A}$ is bounded for all $A\in \mathcal{A}$. Thus $L\setminus E(B)$ is bounded and hence $L=E(L_{B})\cup (L\setminus E(L_{B}))$ is bounded. We showed that if $\mathcal{C}\in \mathfrak{C}$ and $\mathcal{C}\ll \mathcal{A}\vee \mathcal{B}$, then each member of $\mathcal{C}$ is bounded. Therefore $\mathcal{A}\vee \mathcal{B}\notin \mathfrak{N}_{\mathfrak{C}}$. The fact that $\mathfrak{N}_{\mathfrak{C}}$ induces the same topology to the topology of $X$ is evident.
\end{proof}
Let $\mathcal{A}$ be a family of subsets of the topological space $X$. Define
$$\overline{\mathcal{A}}=\{\overline{A}\mid A\in \mathcal{A}\}$$
A nearness $\mathfrak{N}$ on the topological space $X$ is said to be a \emph{Herrlich nearness} (\emph{H-nearness}) if $\overline{\mathcal{A}}\in \mathfrak{N}$ then $\mathcal{A}\in \mathfrak{N}$.
\begin{corollary}
Let $\mathcal{E}$ be a compatible and proper coarse structure on the topological space $X$. If $\mathfrak{C}=\mathfrak{C}_{\mathcal{E}}$, then $\mathfrak{N}_{\mathfrak{C}}$ is a H-nearness.
\end{corollary}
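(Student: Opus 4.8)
The plan is to unwind the definition of an H-nearness. Since $\mathfrak{N}_{\mathfrak{C}}$ is already known to be a nearness on $X$ inducing its topology (the preceding theorem), it remains only to show that $\overline{\mathcal{A}}\in\mathfrak{N}_{\mathfrak{C}}$ implies $\mathcal{A}\in\mathfrak{N}_{\mathfrak{C}}$, where $\mathfrak{C}=\mathfrak{C}_{\mathcal{E}}$ and all closures are taken in $X$. By Definition \ref{nl}, $\overline{\mathcal{A}}\in\mathfrak{N}_{\mathfrak{C}}$ splits into two cases. If $\bigcap_{A\in\mathcal{A}}\overline{\overline{A}}\neq\emptyset$, then since $\overline{\overline{A}}=\overline{A}$ this is just $\bigcap_{A\in\mathcal{A}}\overline{A}\neq\emptyset$, so $\mathcal{A}\in\mathfrak{N}_{\mathfrak{C}}$ by the first clause of Definition \ref{nl} and there is nothing more to do. The substance is the other case: there is $\mathcal{B}\in\mathfrak{C}$ containing no bounded subset of $X$ with $\mathcal{B}\ll\overline{\mathcal{A}}$, and I must manufacture from it a $\mathcal{B}'\in\mathfrak{C}$ with no bounded member and $\mathcal{B}'\ll\mathcal{A}$.

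The one place the interaction between the topology and the coarse structure enters is compatibility. Fix an open $E_{0}\in\mathcal{E}$ with $\Delta\subseteq E_{0}$; replacing $E_{0}$ by $E_{0}\cap E_{0}^{-1}$ (still open, still in $\mathcal{E}$, still containing $\Delta$) I may assume $E_{0}=E_{0}^{-1}$. For any $b\in X$ the section $E_{0}(b)=\{y:(b,y)\in E_{0}\}$ is the preimage of the open set $E_{0}$ under the continuous map $y\mapsto(b,y)$, hence open, and it contains $b$ because $\Delta\subseteq E_{0}$. Therefore, if $b\in\overline{A}$, the open neighbourhood $E_{0}(b)$ meets $A$, so there is $a\in A$ with $(a,b)\in E_{0}$; in short $\overline{A}\subseteq E_{0}(A)$, and moreover every point of $\overline{A}$ admits an $E_{0}$-close witness inside $A$.

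Now use $\mathcal{B}\ll\overline{\mathcal{A}}$ to choose, for each $A\in\mathcal{A}$, some $B_{A}\in\mathcal{B}$ with $B_{A}\subseteq\overline{A}$ (necessarily $B_{A}\neq\emptyset$, since $\emptyset$ is bounded while $\mathcal{B}$ has no bounded member). For each $b\in B_{A}$ pick $a_{b}\in A$ with $(a_{b},b)\in E_{0}$ as above, and put $B_{A}'=\{a_{b}:b\in B_{A}\}\subseteq A$; using $E_{0}=E_{0}^{-1}$ one gets $B_{A}'\subseteq E_{0}(B_{A})$ and $B_{A}\subseteq E_{0}(B_{A}')$. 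Set $\mathcal{B}'=\{B_{A}':A\in\mathcal{A}\}$. Then: (a) $\mathcal{B}'\ll\mathcal{A}$ holds trivially since $B_{A}'\subseteq A$; (b) $\mathcal{B}'\in\mathfrak{C}_{\mathcal{E}}$, because if $E\in\mathcal{E}$ witnesses $\mathcal{B}\in\mathfrak{C}_{\mathcal{E}}$ then for all $A,A''\in\mathcal{A}$ the inclusions $B_{A}'\subseteq E_{0}(B_{A})\subseteq E_{0}(E(B_{A''}))\subseteq E_{0}(E(E_{0}(B_{A''}')))=(E_{0}\circ E\circ E_{0})(B_{A''}')$ exhibit the single entourage $E_{0}\circ E\circ E_{0}\in\mathcal{E}$; and (c) $\mathcal{B}'$ has no bounded member, since the pair $\{B_{A},B_{A}'\}$ lies in $\mathfrak{C}_{\mathcal{E}}$ (the entourage $E_{0}$ works, using $E_{0}=E_{0}^{-1}$ and $\Delta\subseteq E_{0}$), $B_{A}$ is unbounded by the hypothesis on $\mathcal{B}$, and hence Lemma \ref{unb} forces $B_{A}'$ to be unbounded. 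Thus $\mathcal{B}'$ witnesses $\mathcal{A}\in\mathfrak{N}_{\mathfrak{C}}$, which finishes the argument.

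I expect the main obstacle to be the ingredient from the second paragraph — replacing each $\overline{A}$ by a coarsely equivalent subset of $A$ via the open diagonal entourage $E_{0}$; once that is in place, steps (a)--(c) are routine bookkeeping with Definition \ref{defcoarse} (closure under composition) and Lemma \ref{unb}. Note that compatibility of $\mathcal{E}$ is used essentially here, whereas properness plays no role in this particular argument.
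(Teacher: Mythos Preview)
Your proof is correct and follows essentially the same approach as the paper: both use compatibility to fix an open symmetric entourage $E_{0}$ containing the diagonal, observe that $\overline{A}\subseteq E_{0}(A)$, and then replace each $B_{A}\subseteq\overline{A}$ by an $E_{0}$-close subset of $A$ (you choose pointwise witnesses $\{a_{b}\}$ where the paper uses $L_{A}=E_{0}(B)\cap A$), yielding a new family in $\mathfrak{C}_{\mathcal{E}}$ via the composite $E_{0}\circ E\circ E_{0}$ and invoking Lemma~\ref{unb} for unboundedness. Your remark that properness is not actually used is accurate.
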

\begin{proof}
Assume that $\overline{\mathcal{A}}\in\mathfrak{N}_{\mathfrak{C}}$. If $\bigcap_{A\in \mathcal{A}}\overline{A}\neq \emptyset$ then $\mathcal{A}\in \mathfrak{N}_{\mathfrak{C}}$, by Definition \ref{nl}. Now assume that there exists some $\mathcal{B}\in \mathfrak{C}$ such that each element of $\mathcal{B}$ is unbounded and $\mathcal{B}\ll \overline{\mathcal{A}}$. Choose an open $E\in \mathcal{E}$ such that it contains the diagonal and $E=E^{-1}$. It can be easily seen that $\overline{A}\subseteq E(A)$, for all $A\subseteq X$. Choose $F\in \mathcal{E}$ such that $B_{1}\subseteq F(B_{2})$, for all $B_{1},B_{2}\in \mathcal{B}$. For each $A\in \mathcal{A}$, choose $B\in  \mathcal{B}$ such that $B\subseteq A$ and let $L_{A}=E(B)\cap A$. Thus $L_{A}$ contains $B$ and hence it is unbounded. Since $B\subseteq E(A)$, clearly $B\subseteq E(L_{A})$ and $L_{A}\subseteq E(B)$. Let $A_{1},A_{2}\in \mathcal{A}$. It is easy to see that $L_{A_{1}}\subseteq E(F(E(L_{A_{2}})))$. Thus $E\circ F\circ E$ is the desired element in $\mathcal{E}$ that can show that $\mathcal{C}=\{L_{A}\mid A\in \mathcal{A}\}\in \mathfrak{C}$. Clearly $\mathcal{C}\ll \mathcal{A}$ and hence $\mathcal{A}\in \mathfrak{N}_{\mathfrak{C}}$.
\end{proof}
\begin{example}
Let $X$ and $Y$ be as Example \ref{count1} and assume that $\mathfrak{C}=\mathfrak{C}_{Y}$. It can be easily seen that $\mathcal{A}\in \mathfrak{N}_{\mathcal{C}}$, if and only if, $\bigcap_{A\in \mathcal{A}}A\neq \emptyset$ or all elements of $\mathcal{A}$ are infinite, for all $\mathcal{A}\subseteq \mathcal{P}(X)$. Suppose that $\mathcal{A},\mathcal{B}\notin \mathfrak{N}_{\mathfrak{C}}$. So $\bigcap_{A\in \mathcal{A}}A=\emptyset$ and $\bigcap_{B\in \mathcal{B}}B=\emptyset$ and it clearly shows that $\bigcap_{C\in \mathcal{A}\vee \mathcal{B}}C=\emptyset$. In addition, there are $A\in \mathcal{A}$ and $B\in \mathcal{B}$ such that $A$ and $B$ are finite subsets of $X$. Thus $(A\cup B)\in (\mathcal{A}\vee \mathcal{B})$ is finite and hence $\mathcal{A}\vee \mathcal{B}\notin \mathfrak{N}_{\mathfrak{C}}$. Even though $\mathfrak{C}\neq \mathfrak{C}_{\mathcal{E}}$, for each coarse structure $\mathcal{E}$ on $X$, we showed that $\mathfrak{N}_{\mathfrak{C}}$ is a nearness on $X$ (in fact $\mathfrak{N}_{\mathcal{C}}$ is a H-nearness).
\end{example}
\begin{proposition}\label{11}
Let $\mathcal{E}$ be a compatible and proper coarse structure on the topological space $X$ and denote $\mathfrak{C}_{\mathcal{E}}$ by $\mathfrak{C}$. Then $\mathcal{A}\in \mathfrak{N}_{\mathfrak{C}}$ implies $\bigcap_{A\in \mathcal{A}}\nu A \neq \emptyset$, where $\nu A=\overline{A}\cap \nu X$ and $\overline{A}$ is the closure of $A$ in $hX$.
\end{proposition}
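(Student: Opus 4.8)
The plan is to produce, from whichever alternative of Definition \ref{nl} witnesses $\mathcal{A}\in\mathfrak{N}_{\mathfrak{C}}$, a single point of the Higson corona $\nu X$ lying in $\nu A$ for every $A\in\mathcal{A}$. By Definition \ref{nl}, either $\bigcap_{A\in\mathcal{A}}\cl_{X}(A)\neq\emptyset$, in which case a common point of those closures already sits in $X\subseteq hX$, or there is a family $\mathcal{B}\in\mathfrak{C}=\mathfrak{C}_{\mathcal{E}}$ no member of which is bounded and with $\mathcal{B}\ll\mathcal{A}$. The content of the proposition is the second alternative, which I now treat; I may assume $\mathcal{A}\neq\emptyset$, so that $\mathcal{B}\neq\emptyset$.

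Since $\mathcal{B}\in\mathfrak{C}_{\mathcal{E}}$, there is $E\in\mathcal{E}$ with $B\subseteq E(B')$, and hence also $B'\subseteq E(B)$, for all $B,B'\in\mathcal{B}$. The first step is to show that every member of $\mathcal{B}$ has non-empty corona trace. Fix $B_{0}\in\mathcal{B}$. Because $B_{0}$ is unbounded and $\mathcal{E}$ is proper, $B_{0}$ is not relatively compact, so $\cl_{X}(B_{0})$ is not compact. On the other hand $\cl_{hX}(B_{0})$, being closed in the compact space $hX$, is compact; and since $\cl_{X}(B_{0})=\cl_{hX}(B_{0})\cap X$, were $\cl_{hX}(B_{0})$ contained in $X$ we would obtain $\cl_{X}(B_{0})=\cl_{hX}(B_{0})$, forcing $\cl_{X}(B_{0})$ to be compact, a contradiction. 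Hence $\cl_{hX}(B_{0})$ meets $hX\setminus X=\nu X$; that is, $\nu B_{0}=\cl_{hX}(B_{0})\cap\nu X\neq\emptyset$, and we may choose $w\in\nu B_{0}$.

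The second step carries $w$ over all of $\mathcal{A}$. For each $B\in\mathcal{B}$ we have $B\subseteq E(B_{0})$ and $B_{0}\subseteq E(B)$, so the computation in the proof of Proposition \ref{subset} yields $\cl_{hX}(B)\cap\nu X=\cl_{hX}(B_{0})\cap\nu X$, i.e. $\nu B=\nu B_{0}$; in particular $w\in\nu B$ for every $B\in\mathcal{B}$. Now take an arbitrary $A\in\mathcal{A}$. Since $\mathcal{B}\ll\mathcal{A}$, there is $B\in\mathcal{B}$ with $B\subseteq A$, whence $\cl_{hX}(B)\subseteq\cl_{hX}(A)$ and so $w\in\nu B\subseteq\nu A$. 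As $A$ was arbitrary, $w\in\bigcap_{A\in\mathcal{A}}\nu A$, so this intersection is non-empty, which is what was claimed.

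The step requiring the most care is the first one. It is there that properness of $\mathcal{E}$ is genuinely used, to deduce that $\cl_{X}(B_{0})$ is not compact from the unboundedness of $B_{0}$, and that the identity $\cl_{X}(B_{0})=\cl_{hX}(B_{0})\cap X$ relating the two closures is exploited to extract a boundary point of $B_{0}$. After that the argument is routine bookkeeping: the monotonicity $\cl_{hX}(B)\subseteq\cl_{hX}(A)$ for $B\subseteq A$, together with the invariance of the corona trace under passing to an $E$-close set already recorded in the proof of Proposition \ref{subset}.
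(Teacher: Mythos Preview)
Your treatment of the second alternative of Definition \ref{nl} is correct and is precisely what the paper has in mind when it invokes Proposition \ref{subset}: from $\mathcal{B}\in\mathfrak{C}_{\mathcal{E}}\subseteq\mathfrak{C}_{hX}$ you obtain $\nu B=\nu B_{0}$ for all $B\in\mathcal{B}$, properness (together with compatibility, so that relatively compact sets are bounded) gives $\nu B_{0}\neq\emptyset$, and $\mathcal{B}\ll\mathcal{A}$ carries any $w\in\nu B_{0}$ into every $\nu A$. The paper compresses all of this into a single sentence; you have merely written out the details.

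There is, however, a genuine gap in your handling of the first alternative. You say that if $\bigcap_{A\in\mathcal{A}}\cl_{X}(A)\neq\emptyset$ then ``a common point of those closures already sits in $X\subseteq hX$'', and move on. But the conclusion requires a point of $\bigcap_{A\in\mathcal{A}}\nu A$, where $\nu A=\cl_{hX}(A)\cap\nu X$ and $\nu X=hX\setminus X$; a point lying in $X$ is precisely \emph{not} in $\nu X$, so this sentence proves nothing toward the stated conclusion. Indeed the first alternative alone cannot yield the conclusion: for $\mathcal{A}=\{\{x\}\}$ one has $\mathcal{A}\in\mathfrak{N}_{\mathfrak{C}}$ via the first clause, yet $\nu\{x\}=\emptyset$. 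The paper's one-line proof likewise appeals only to Proposition \ref{subset} and so also speaks only to the second alternative; the intended reading is evidently that the content of the proposition lies in the unbounded clause. Your dismissal of the first case is nonetheless a non-sequitur and should be flagged as such rather than presented as settling that case.
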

\begin{proof}
It is a straightforward consequence of Proposition \ref{subset}.
\end{proof}
The inverse of Proposition \ref{11} is not true in general.
\begin{example}
Let $X=\mathbb{N}$ and consider $X$ with the standard metric induced from $\mathbb{R}$. Let $\mathfrak{C}=\mathfrak{C}_{d}$ and assume that $\delta$ is the proximity defined in Definition \ref{pad}, where $\mathcal{E}=\mathcal{E}_{d}$. For each $n\in \mathbb{N}$ suppose that
$$A_{n}=\{2^{nk}\mid k\in \mathbb{N}\}$$
Let $\mathcal{F}$ denote the family of all $B\subseteq X$ such that $A_{n}\subseteq B$, for some $n\in \mathbb{N}$. Clearly $\mathcal{F}$ is a filter in $X$. Let $\tilde{\mathcal{F}}$ be an ultrafilter containing $\mathcal{F}$ and let
$$\mathcal{C}=\{A\subseteq X\mid A\delta B\,for\,all\,B\in \tilde{\mathcal{F}}\}$$
The family $\mathcal{C}$ is a cluster in $(X,\delta)$ (see Theorem 5.8 of \cite{Nai}) and clearly $\mathcal{C}\in \nu X$. In addition $\mathcal{C}\in \nu A_{n}$ for all $n\in \mathbb{N}$. Therefore $\bigcap_{n\in \mathbb{N}}\nu A_{n}\neq \emptyset$. Now suppose that $L\subseteq A_{1}$ and $m=\min \{a\mid a\in L\}$. So $m=2^{k}$, for some $k\in \mathbb{N}$. Since if $n\rightarrow +\infty$ then $\mid 2^{n}-2^{k}\mid \rightarrow +\infty$, $d(m,A_{n})\rightarrow +\infty$, where $n\rightarrow +\infty$. This clearly shows that $\{A_{n}\mid n\in \mathbb{N}\}\notin \mathfrak{N}_\mathfrak{C}$.
\end{example}
\begin{definition}
Let $(X,\mathfrak{N})$ be a N-space. The nonempty subset $\mathcal{C}$ of $\mathcal{P}(X)$ is called a \emph{bunch} in $(X,\mathfrak{N})$ if it satisfies the following properties.\\
i) $\mathcal{C}\in \mathfrak{N}$.\\
ii) $A\cup B\in \mathcal{C}$, if and only if, $A\in \mathcal{C}$ or $B\in \mathcal{C}$, for all $A,B\subseteq X$.\\
iii) if $\overline{A}\in \mathcal{C}$ then $A\in \mathcal{C}$.\\
It is worth mentioning that a bunch $\mathcal{C}$ in $(X,\mathfrak{N})$ is called a \emph{near cluster} if $(\{\{A\}\}\cup \mathcal{C})\in \mathfrak{N}$ then $A\in \mathcal{C}$.
\end{definition}
It is known that if $A\delta B$ for two subsets $A$ and $B$ of the proximity space $(X,\delta)$ then there exists a cluster $\mathcal{C}$ in $(X,\delta)$ such that $A,B\in \mathcal{C}$ (\cite{Tho}). But this result can not be generalized to all nearness spaces. A counter example for this later fact is given in \cite{Nai3}. We are going to offer a relatively big class of counter examples here.
\begin{proposition}\label{2222}
Let $d$ be a metric on the set $X$ and let $\mathfrak{C}=\mathfrak{C}_{d}$. Assume that $\mathcal{A}\in \mathfrak{N}_{\mathfrak{C}}$ is such that $\bigcap_{A\in \mathcal{A}}\overline{A}=\emptyset$. Then each bunch in $(X,\mathfrak{N}_{\mathfrak{C}})$ does not contain $\mathcal{A}$.
\end{proposition}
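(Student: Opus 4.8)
The plan is to argue by contradiction. Suppose $\mathcal{C}$ is a bunch in $(X,\mathfrak{N}_{\mathfrak{C}})$ with $\mathcal{A}\subseteq\mathcal{C}$; I will derive the absurd conclusion $\emptyset\in\mathcal{C}$. Since $\mathcal{A}\subseteq\mathcal{C}$, we have $\bigcap_{C\in\mathcal{C}}\overline{C}\subseteq\bigcap_{A\in\mathcal{A}}\overline{A}=\emptyset$, so the first alternative in Definition \ref{nl} fails for $\mathcal{C}$; as $\mathcal{C}\in\mathfrak{N}_{\mathfrak{C}}$ (the first axiom of a bunch), the second alternative must hold: there is $\mathcal{B}\in\mathfrak{C}=\mathfrak{C}_{d}$ containing no bounded subset of $X$ with $\mathcal{B}\ll\mathcal{C}$. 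Fix $k>0$ with $d_{H}(B,B')\le k$ for all $B,B'\in\mathcal{B}$, fix $B_{0}\in\mathcal{B}$, and set $L:=B_{0}$; then $L$ is metrically unbounded. Writing $N_{k}(S)=\{x\in X:\ d(x,S)\le k\}$, the key observation I extract is: for every $C\in\mathcal{C}$ there is $B\in\mathcal{B}$ with $B\subseteq C$, and since $d_{H}(L,B)\le k$ we get $L\subseteq N_{k}(B)\subseteq N_{k}(C)$. Thus \emph{every} member of $\mathcal{C}$ shadows the single unbounded set $L$ within Hausdorff distance $k$.

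Next I would build a decomposition of $L$ that clashes with the ``prime'' axiom (the second axiom) of a bunch. Fix $x_{0}\in L$ and, using that $L$ is unbounded, greedily pick radii $0=r_{0}<r_{1}<r_{2}<\cdots$ with $r_{j+1}>r_{j}+2k+2$ so that each shell $L_{j}:=\{x\in L:\ r_{j}<d(x_{0},x)\le r_{j}+1\}$ is nonempty. Each $L_{j}$ is bounded and contained in $L$, and the radius gaps make the neighbourhoods $N_{k}(L_{j})$ pairwise disjoint. Partition $\mathbb{N}$ into two infinite sets $I_{1},I_{2}$ and put $U_{m}=\bigcup_{j\in I_{m}}N_{k}(L_{j})$ for $m=1,2$; then $U_{1}\cap U_{2}=\emptyset$. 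The pivotal sub-claim is: for every $D\in\mathcal{C}$ and every $m\in\{1,2\}$, $D\cap U_{m}\in\mathcal{C}$. Indeed, by the second axiom of a bunch applied to $D=(D\cap U_{m})\cup(D\setminus U_{m})$, at least one of the two pieces lies in $\mathcal{C}$; but $D\setminus U_{m}\in\mathcal{C}$ would give, by the observation above, $L\subseteq N_{k}(D\setminus U_{m})$, so for any $j\in I_{m}$ the nonempty set $L_{j}\subseteq L$ has a point of $D\setminus U_{m}$ within distance $k$ of it, i.e. a point of $D\setminus U_{m}$ lying in $N_{k}(L_{j})\subseteq U_{m}$ — impossible. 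Hence $D\cap U_{m}\in\mathcal{C}$.

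To finish, note that $X\in\mathcal{C}$ (a bunch is nonempty and upward closed, by the second axiom), so applying the sub-claim to $(X,1)$ gives $U_{1}\in\mathcal{C}$, and then applying it to $(U_{1},2)$ gives $U_{1}\cap U_{2}\in\mathcal{C}$, that is $\emptyset\in\mathcal{C}$, contradicting the third axiom of a nearness (Definition \ref{near}). I expect the substantive step to be the middle paragraph: recognising that membership in $\mathfrak{N}_{\mathfrak{C}_{d}}$ (with empty closure intersection) pins all members of any candidate bunch to one fixed coarse ``model'' set $L$ up to a \emph{uniform} Hausdorff distance, and then carving $L$ into shells $L_{j}$ with enough metric separation that the two halves $U_{1},U_{2}$ become genuinely disjoint while together still trapping every member of $\mathcal{C}$. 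The first paragraph is unwinding definitions, and the last is a two-step iteration of the sub-claim.
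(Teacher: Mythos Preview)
Your proof is correct and takes a different route from the paper's. Both arguments fix an unbounded $L\in\mathcal{B}$ and exploit that every $C\in\mathcal{C}$ contains a set at Hausdorff distance at most $k$ from $L$; from there they diverge. The paper invokes Lemma~4.2 of \cite{me3} to split $L$ into two unbounded, asymptotically disjoint pieces $L_{1},L_{2}$, then uses asymptotic normality of $(X,\lambda_{d})$ (Proposition~4.5 of \cite{me}) to write $X=X_{1}\cup X_{2}$ with $X_{i}$ asymptotically disjoint from $L_{i}$; since neither $X_{i}$ can then contain a subset at finite Hausdorff distance from $L$, neither lies in $\mathcal{C}$, and the bunch axiom gives $X\notin\mathcal{C}$. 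Your shell construction is more elementary and self-contained: it avoids both the external lemma and the appeal to asymptotic normality, replacing them with an explicit metric separation calibrated to the single constant $k$, and reaches the dual contradiction $\emptyset\in\mathcal{C}$ by forcing two \emph{disjoint} sets into $\mathcal{C}$ rather than keeping two \emph{covering} sets out. The paper's version is shorter and situates the result within the asymptotic-resemblance machinery already developed in \cite{me} and \cite{me3}; yours would work verbatim for a reader who has only the definitions of the present paper.
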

\begin{proof}
Assume that, contrary to our claim, there exists some bunch $\mathcal{C}$ in $(X,\mathfrak{N}_{d})$ such that $\mathcal{A}\subseteq \mathcal{C}$. Since $\mathcal{C}\in \mathfrak{N}_{\mathfrak{C}}$, there exists some $\mathcal{B}\in \mathfrak{C}_{d}$ such that $\mathcal{B}$ does not contain any bounded subset of $X$ and $\mathcal{B}\ll \mathcal{C}$. Let $L\in \mathcal{B}$. By Lemma 4.2 of \cite{me3}, there are unbounded and asymptotically disjoint subsets $L_{1}$ and $L_{2}$ of $L$ in the AS.R space $(X,\lambda_{d})$. Since $(X,\lambda_{d})$ is an asymptotically normal AS.R space, there exist $X_{1},X_{2}\subseteq X$ such that $X=X_{1}\cup X_{2}$ and they are asymptotically disjoint from $L_{1}$ and $L_{2}$ in $(X,\lambda_{d})$, respectively. Thus $X_{1}$ and $X_{2}$ do not contain any subset with finite Hausdorff distance from $L$. Thus $X_{1},X_{2}\notin \mathcal{C}$ and hence $X\notin \mathcal{C}$, a contradiction.
\end{proof}
Suppose that $X$ is a dense subspace of the topological space $Y$. Define,\\
`$\mathcal{A}\in \mathfrak{N}_{Y}$ if $\bigcap_{A\in \mathcal{A}}\overline{A}\neq \emptyset$, for all $\mathcal{A}\subseteq \mathcal{P}(X)$'.\\
The nearness structure $\mathfrak{N}_{Y}$ is called the \emph{topological nearness} on $X$ associated to $Y$.
\begin{corollary}
Let $(X,d)$ be a metric space and assume that $\mathfrak{C}=\mathfrak{C}_{d}$. Then the nearness $\mathfrak{N}_{\mathfrak{C}}$ is not the topological nearness associated to $Y$, for each compactification $Y$ of $X$.
\end{corollary}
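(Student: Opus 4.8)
The plan is to separate $\mathfrak{N}_{\mathfrak{C}}$ from every topological nearness $\mathfrak{N}_{Y}$ by the property ``each near family is contained in a bunch'', feeding Proposition \ref{2222} into the argument. The preliminary step is the easy observation that $\mathfrak{N}_{Y}$ \emph{does} enjoy this property for every compactification $Y$ of $X$: given $\mathcal{A}\in\mathfrak{N}_{Y}$, choose a point $y$ in $\bigcap_{A\in\mathcal{A}}\overline{A}$ (closures taken in $Y$) and set $\mathcal{C}_{y}=\{B\subseteq X\mid y\in\overline{B}\}$. Then $\mathcal{A}\subseteq\mathcal{C}_{y}$, and $\mathcal{C}_{y}$ is a bunch in $(X,\mathfrak{N}_{Y})$: it lies in $\mathfrak{N}_{Y}$ because $y$ belongs to the $Y$-closure of each of its members; it satisfies the union axiom because $\overline{A\cup B}=\overline{A}\cup\overline{B}$; and it satisfies the closure axiom because the $Y$-closure of the $X$-closure of $A$ is again the $Y$-closure of $A$ (as $A\subseteq\overline{A}^{X}\subseteq\overline{A}^{Y}$ and $\overline{A}^{Y}$ is closed in $Y$). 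Note also that the topology induced on $X$ by $\mathfrak{N}_{Y}$ is the original one, so this ``closure'' in the bunch axiom is the correct one.

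Next I would invoke Proposition \ref{2222}. If $\mathfrak{N}_{\mathfrak{C}}$ contains some family $\mathcal{A}$ with $\bigcap_{A\in\mathcal{A}}\overline{A}=\emptyset$ (closures now in $X$) --- and for an unbounded $X$ one can exhibit one by taking $\mathcal{A}=\mathcal{B}$ for a uniformly bounded family $\mathcal{B}$ of unbounded subsets of $X$ with pairwise disjoint closures, for instance $\{2\mathbb{Z},\,2\mathbb{Z}+1\}$ in $\mathbb{Z}$ or $\{\mathbb{Z},\,\mathbb{Z}+\tfrac{1}{2}\}$ in $\mathbb{R}$ --- then Proposition \ref{2222} says $\mathcal{A}$ lies in no bunch of $(X,\mathfrak{N}_{\mathfrak{C}})$, whereas the preliminary step would force it into a bunch if $\mathfrak{N}_{\mathfrak{C}}$ coincided with some $\mathfrak{N}_{Y}$; hence $\mathfrak{N}_{\mathfrak{C}}\neq\mathfrak{N}_{Y}$ for every compactification $Y$. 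When no such $\mathcal{A}$ exists, $\mathfrak{N}_{\mathfrak{C}}$ reduces to exactly the set of families whose $X$-closures have nonempty intersection, and then (provided $X$ is not compact) one separates it from any $\mathfrak{N}_{Y}$ directly: choose $p\in Y\setminus X$ and a neighbourhood base $(U_{i})$ of $p$ in $Y$, and observe that $\{U_{i}\cap X\}$ lies in $\mathfrak{N}_{Y}$ (its $Y$-closures all contain $p$) but not in $\mathfrak{N}_{\mathfrak{C}}$ (its $X$-closures have empty intersection).

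I expect the real difficulty to be organisational rather than computational: Proposition \ref{2222} only bites once $\mathfrak{N}_{\mathfrak{C}}$ is known to contain a near family with empty $X$-closure intersection, and this is not automatic for an arbitrary metric space (it fails for sufficiently lacunary, and for bounded, spaces), so either the dichotomy above is needed or one should restrict to unbounded $X$ --- which is surely the intended setting, since for a compact $X$ the trivial compactification $Y=X$ gives $\mathfrak{N}_{\mathfrak{C}}=\mathfrak{N}_{Y}$. A cleaner case-free alternative worth mentioning is that each $\mathfrak{N}_{Y}$ is \emph{contigual} --- by compactness of $Y$, a family belongs to $\mathfrak{N}_{Y}$ as soon as all of its finite subfamilies do --- whereas $\mathfrak{N}_{\mathfrak{C}_{d}}$ is not: for an unbounded $X$ the family of complements of bounded subsets has all of its finite subfamilies in $\mathfrak{N}_{\mathfrak{C}}$ but is not itself in $\mathfrak{N}_{\mathfrak{C}}$, and for a bounded non-compact $X$ the tails of a sequence of distinct points with no convergent subsequence do the same job.
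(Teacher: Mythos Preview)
Your approach---separate $\mathfrak{N}_{\mathfrak{C}}$ from every $\mathfrak{N}_{Y}$ by the property ``every near family lies in a bunch'', supplying the point-bunch $\mathcal{C}_{y}$ on the $\mathfrak{N}_{Y}$ side and invoking Proposition~\ref{2222} on the $\mathfrak{N}_{\mathfrak{C}}$ side---is exactly the argument the paper has in mind; its entire proof reads ``It is a straightforward consequence of Proposition~\ref{2222}.'' You have simply filled in the details the paper omits.

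Your further observation is correct and worth recording: the argument only bites once one has exhibited some $\mathcal{A}\in\mathfrak{N}_{\mathfrak{C}}$ with $\bigcap_{A\in\mathcal{A}}\overline{A}=\emptyset$, and for compact $X$ no such $\mathcal{A}$ exists (every subset is bounded, so the second clause of Definition~\ref{nl} is vacuous and $\mathfrak{N}_{\mathfrak{C}}=\mathfrak{N}_{X}$). The corollary as stated is therefore literally false for compact $X$; the paper is tacitly working with unbounded $X$, where your examples (or your contiguality argument) supply the missing $\mathcal{A}$ without difficulty. Your dichotomy and the alternative via contiguality are both sound ways to make the statement honest.
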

\begin{proof}
It is a straightforward consequence of Proposition \ref{2222}.
\end{proof}
\section{Asymptotic dimension of LS.R spaces}\label{s4}
\begin{definition}\label{unif}
Let $\mathfrak{C}$ be a LS.R on the set $X$. We say the family $\mathcal{U}$ of subsets of $X$ is \emph{uniformly bounded} in $(X,\mathfrak{C})$, if
$$\mathcal{A}_{\mathcal{V}}=\{A\subseteq X\mid A\subseteq \bigcup_{U\in \mathcal{V}}U\,and\, A\cap U\neq \emptyset \,for\,all\,U\in \mathcal{V}\}\in \mathfrak{C}$$
for all nonempty $\mathcal{V}\subseteq \mathcal{U}$.
\end{definition}
\begin{corollary}\label{ub}
Suppose that $\mathcal{U}$ is a uniformly bounded family of subsets of the LS.R space $(X,\mathfrak{C})$, then each element of $\mathcal{U}$ is bounded.
\end{corollary}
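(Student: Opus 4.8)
The plan is to unwind Definition \ref{unif} at the cheapest possible instance, namely a singleton subfamily. Fix an arbitrary $U\in\mathcal{U}$. If $U=\emptyset$ there is nothing to prove, since $\emptyset$ is bounded by definition, so assume $U\neq\emptyset$ and pick any $x\in U$.

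First I would compute $\mathcal{A}_{\mathcal{V}}$ for the choice $\mathcal{V}=\{U\}\subseteq\mathcal{U}$. Directly from the definition, the condition $A\subseteq\bigcup_{U'\in\mathcal{V}}U'$ becomes $A\subseteq U$, and the condition $A\cap U'\neq\emptyset$ for all $U'\in\mathcal{V}$ becomes $A\cap U\neq\emptyset$; hence $\mathcal{A}_{\{U\}}$ is exactly the family of all nonempty subsets of $U$. Since $\mathcal{U}$ is uniformly bounded, $\mathcal{A}_{\{U\}}\in\mathfrak{C}$.

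Next I would observe that both $U$ and $\{x\}$ lie in $\mathcal{A}_{\{U\}}$, so $\{U,\{x\}\}\subseteq\mathcal{A}_{\{U\}}$. By property ii) of Definition \ref{defasl}, every subset of a member of $\mathfrak{C}$ is again a member of $\mathfrak{C}$, so $\{U,\{x\}\}\in\mathfrak{C}$. This is precisely the condition for $U$ to be bounded, which finishes the argument.

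I do not expect a genuine obstacle here: the statement is essentially a sanity check on Definition \ref{unif}, and the only thing to be careful about is the degenerate case $U=\emptyset$ (handled by the ``$B=\emptyset$'' clause of boundedness) and making sure one reads off $\mathcal{A}_{\{U\}}$ correctly as the nonempty subsets of $U$ rather than all subsets.
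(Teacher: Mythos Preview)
Your proof is correct and follows essentially the same route as the paper: take $\mathcal{V}=\{U\}$, observe that $\{x\}$ and $U$ both lie in $\mathcal{A}_{\mathcal{V}}$, and apply property ii) of Definition~\ref{defasl} to conclude $\{\{x\},U\}\in\mathfrak{C}$. You give a bit more detail (identifying $\mathcal{A}_{\{U\}}$ explicitly as the nonempty subsets of $U$ and handling the $U=\emptyset$ case separately), but the underlying argument is identical.
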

\begin{proof}
Assume that $U\in \mathcal{U}$ and $x\in U$. Let $\mathcal{V}=\{U\}$. Clearly $x,U\in \mathcal{A}_{\mathcal{V}}$ and property ii) of Definition \ref{defasl} clearly shows that $\{\{x\},U\}\in \mathfrak{C}$.
\end{proof}
Let us recall the following lemma from \cite{me}.
\begin{lemma}\label{haus}
Suppose that $(X,d)$ is a metric space. Let $(a_{n})_{n\in \mathbb{N}}$ and $(b_{n})_{n\in \mathbb{N}}$ are two sequences in $X$. If for each $I\subseteq \mathbb{N}$ the Hausdorff distance between $\{a_{i}\mid i\in I\}$ and $\{b_{i}\mid i\in I\}$ is finite, then there exists some $R>0$ such that $d(a_{n},b_{n})\leq R$, for all $n\in \mathbb{N}$.
\end{lemma}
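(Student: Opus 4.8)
The plan is to prove the contrapositive. Assume that $\sup_{n}d(a_{n},b_{n})=+\infty$; I will produce a set $I\subseteq\mathbb{N}$ for which the Hausdorff distance between $\{a_{i}\mid i\in I\}$ and $\{b_{i}\mid i\in I\}$ is infinite, contradicting the hypothesis. First I would pass to a subsequence so that, after relabelling, $d(a_{n},b_{n})\to+\infty$; this is legitimate, since a set of indices witnessing an infinite Hausdorff distance for a subsequence corresponds, via the inclusion into $\mathbb{N}$, to such a set for the original pair of sequences. Next, fix a point $p\in X$ and set $u_{n}=d(a_{n},p)$ and $v_{n}=d(b_{n},p)$; by the triangle inequality $u_{n}+v_{n}\ge d(a_{n},b_{n})\to+\infty$, so $\max(u_{n},v_{n})\to+\infty$.

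I would then distinguish two cases according to the behaviour of $\min(u_{n},v_{n})$. \emph{Case 1: the sequence $(\min(u_{n},v_{n}))_{n}$ is bounded}, say by $M$. Then for every $n$ at least one of $a_{n},b_{n}$ lies within distance $M$ of $p$, so by the pigeonhole principle, and since the statement is symmetric in the two sequences, I may assume $u_{n}\le M$ for all $n$ in some infinite set $I$. For $n\in I$ we then get $v_{n}\ge d(a_{n},b_{n})-u_{n}\ge d(a_{n},b_{n})-M\to+\infty$, whence $d(b_{n},\{a_{m}\mid m\in I\})\ge v_{n}-M\to+\infty$; thus the Hausdorff distance between $\{a_{i}\mid i\in I\}$ and $\{b_{i}\mid i\in I\}$ is infinite, as required. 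This case is easy.

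\emph{Case 2: $(\min(u_{n},v_{n}))_{n}$ is unbounded.} Here I would first pass to a subsequence along which $u_{n}\to+\infty$ and $v_{n}\to+\infty$ simultaneously (and still $d(a_{n},b_{n})\to+\infty$), and then build $I=\{i_{1}<i_{2}<\cdots\}$ recursively so that both ``coordinates'' of the chosen points escape very fast: given $i_{1},\dots,i_{k-1}$, put $R_{k-1}=\max(\{0\}\cup\{u_{i_{j}},v_{i_{j}}\mid j<k\})$ and choose $i_{k}>i_{k-1}$ with $u_{i_{k}}$, $v_{i_{k}}$ and $d(a_{i_{k}},b_{i_{k}})$ all larger than $10R_{k-1}+10k$ -- this is possible precisely because each of these three quantities tends to $+\infty$. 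A short triangle-inequality computation then shows $d(a_{i_{k}},b_{i_{m}})>9k$ for every $m$: for $m<k$ one bounds $d(a_{i_{k}},b_{i_{m}})\ge u_{i_{k}}-v_{i_{m}}$ and uses $v_{i_{m}}\le R_{k-1}$; for $m>k$ one bounds $d(a_{i_{k}},b_{i_{m}})\ge v_{i_{m}}-u_{i_{k}}$ and uses $u_{i_{k}}\le R_{m-1}$; and for $m=k$ one uses the condition on $d(a_{i_{k}},b_{i_{k}})$ directly. Hence $d(a_{i_{k}},\{b_{i_{m}}\mid m\in I\})\to+\infty$, so once more the Hausdorff distance between $\{a_{i}\mid i\in I\}$ and $\{b_{i}\mid i\in I\}$ is infinite, contradicting the hypothesis. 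This completes the proof.

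The step I expect to be the main obstacle is Case 2, where both sequences leave every bounded set. A naive one-basepoint estimate does not suffice there, because a point $b_{i_{m}}$ chosen far from $p$ can still, by accident, be close to some $a_{i_{k}}$ selected earlier. Forcing \emph{both} $u_{i_{k}}$ and $v_{i_{k}}$ to dominate all previously used distances (so that the selected distances grow geometrically) is exactly what is needed: it keeps each new $a$-point far from all earlier $b$-points and each new $b$-point far from all earlier $a$-points at the same time, and this is what controls $d(a_{i_{k}},b_{i_{m}})$ for all $m$ at once.
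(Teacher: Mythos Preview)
Your argument is correct. Note, however, that the paper does not actually prove this lemma: its entire proof is the sentence ``It is Lemma~2.2 of \cite{me}'', referring to an earlier paper by the same author, so there is no in-paper argument to compare against. Your contrapositive proof---splitting according to whether $\min\bigl(d(a_n,p),d(b_n,p)\bigr)$ stays bounded, handling the bounded case by a direct pigeonhole, and in the unbounded case selecting indices recursively so that both $d(a_{i_k},p)$ and $d(b_{i_k},p)$ (as well as $d(a_{i_k},b_{i_k})$) dominate all previously recorded distances---is sound, and the triangle-inequality estimates you state do yield $d(a_{i_k},b_{i_m})>9k$ for every $m$, hence infinite Hausdorff distance for the constructed index set. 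This gives a clean self-contained substitute for the citation.
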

\begin{proof}
It is Lemma 2.2 of \cite{me}.
\end{proof}
\begin{proposition}
Let $d$ be a metric on the set $X$. Then the family $\mathcal{U}$ of subsets of $(X,\mathfrak{C}_{d})$ is uniformly bounded, if and only if, there exists some $R>0$ such that $\operatorname{diam}(U)\leq R$, for all $U\in \mathcal{U}$.
\end{proposition}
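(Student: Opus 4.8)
The plan is to prove the two implications separately. For the "if" direction I would argue by a direct metric estimate; for the "only if" direction the decisive tool is Lemma \ref{haus}, applied in a way that exploits the clause ``for all nonempty $\mathcal{V}\subseteq\mathcal{U}$'' in Definition \ref{unif}.

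\textbf{Sufficiency.} Assuming $\operatorname{diam}(U)\leq R$ for all $U\in\mathcal{U}$, I would fix a nonempty $\mathcal{V}\subseteq\mathcal{U}$ and show that $d_{H}(A,B)\leq R$ for every $A,B\in\mathcal{A}_{\mathcal{V}}$. Given $a\in A$, since $A\subseteq\bigcup_{U\in\mathcal{V}}U$ there is $U\in\mathcal{V}$ with $a\in U$, and since $B\cap U\neq\emptyset$ (by definition of $\mathcal{A}_{\mathcal{V}}$) picking $b\in B\cap U$ gives $d(a,b)\leq\operatorname{diam}(U)\leq R$; the same works with $A$ and $B$ interchanged. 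Hence $\mathcal{A}_{\mathcal{V}}\in\mathfrak{C}_{d}$ with witness $k=R$, and as $\mathcal{V}$ was arbitrary, $\mathcal{U}$ is uniformly bounded in $(X,\mathfrak{C}_{d})$.

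\textbf{Necessity.} Assuming $\mathcal{U}$ uniformly bounded, I would argue by contradiction: if $\sup_{U\in\mathcal{U}}\operatorname{diam}(U)=\infty$, choose for each $n\in\mathbb{N}$ a set $U_{n}\in\mathcal{U}$ and points $a_{n},b_{n}\in U_{n}$ with $d(a_{n},b_{n})>n$. The key step is to verify the hypothesis of Lemma \ref{haus} for the sequences $(a_{n})_{n\in\mathbb{N}}$ and $(b_{n})_{n\in\mathbb{N}}$, i.e. that for every $I\subseteq\mathbb{N}$ the Hausdorff distance between $\{a_{i}\mid i\in I\}$ and $\{b_{i}\mid i\in I\}$ is finite. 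For nonempty $I$, I would take $\mathcal{V}=\{U_{i}\mid i\in I\}\subseteq\mathcal{U}$ and observe that both $\{a_{i}\mid i\in I\}$ and $\{b_{i}\mid i\in I\}$ are contained in $\bigcup_{i\in I}U_{i}$ and meet every member of $\mathcal{V}$, hence belong to $\mathcal{A}_{\mathcal{V}}\in\mathfrak{C}_{d}$, so they lie at finite Hausdorff distance. Lemma \ref{haus} then yields $R>0$ with $d(a_{n},b_{n})\leq R$ for all $n$, contradicting $d(a_{n},b_{n})>n$; therefore $\sup_{U\in\mathcal{U}}\operatorname{diam}(U)<\infty$, the required bound.

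I expect the main obstacle — really the one conceptual point — to be recognizing that checking the hypothesis of Lemma \ref{haus} forces one to control $\mathcal{A}_{\mathcal{V}}$ for \emph{all} the subfamilies $\mathcal{V}=\{U_{i}\mid i\in I\}$, $I\subseteq\mathbb{N}$, and that this is precisely why Definition \ref{unif} quantifies over every nonempty $\mathcal{V}\subseteq\mathcal{U}$ rather than over $\mathcal{U}$ alone; a definition testing only $\mathcal{U}$ itself would not force a uniform diameter bound. As a quick preliminary, taking $\mathcal{V}=\{U\}$ shows $\mathcal{A}_{\{U\}}$ is the family of all nonempty subsets of $U$, so each individual $U$ already has finite diameter — a useful sanity check, though not strictly needed for the argument above.
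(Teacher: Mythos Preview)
Your proof is correct and follows essentially the same route as the paper's own argument: the sufficiency direction is the same direct Hausdorff-distance estimate, and the necessity direction is the same contradiction argument via Lemma \ref{haus}, using the subfamilies $\mathcal{V}=\{U_{i}\mid i\in I\}$ to verify its hypothesis. Your added remarks on why the quantification over all $\mathcal{V}\subseteq\mathcal{U}$ is essential, and the singleton sanity check, are not in the paper but are helpful commentary rather than a different method.
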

\begin{proof}
Suppose that there exists some $R>0$ such that $\operatorname{diam}(U)\leq R$, for all $U\in \mathcal{U}$. Let $\mathcal{V}$ be a nonempty subset of $\mathcal{U}$. Suppose that $A,B\in \mathcal{A}_{\mathcal{V}}$. If $a\in A$ there exists some $U\in \mathcal{V}$ such that $a\in U$. Since $B\cap U\neq \emptyset$, there exists some $b\in (B\cap U)$. So $d(a,b)\leq R$. Similar argument holds for each element of $B$ and this shows that the Hausdorff distance between $A$ and $B$ is less than or equal to $R$. Thus $\mathcal{A}_{\mathcal{V}}\in \mathfrak{C}_{d}$. To prove the converse, suppose that contrary to the claim of this proposition, for each $n\in \mathbb{N}$ there exists some $U_{n}\in \mathcal{U}$ such that $\operatorname{diam}(U_{n})>n$. Thus there are $a_{n},b_{n}\in U_{n}$ such that $d(a_{n},b_{n})>n$, for each $n\in \mathbb{N}$. Assume that $I\subseteq \mathbb{N}$. Let $\mathcal{V}=\{U_{i}\mid i\in I\}$. Suppose that $A=\{a_{i}\mid i\in I\}$ and $B=\{b_{i}\mid i\in I\}$. Clearly $A,B\in \mathcal{A}_{\mathcal{V}}$. Since $\mathcal{A}_{\mathcal{V}}\in \mathfrak{C}_{d}$, there exists some $K>0$ such that $d_{H}(A,B)\leq K$. Now clearly Lemma \ref{haus} shows that there exists some $R>0$ such that $d(a_{n},b_{n})\leq R$, for all $n\in \mathbb{N}$, a contradiction.
\end{proof}
Let $X$ be a set and assume $\mathcal{U},\mathcal{V}\subseteq \mathcal{P}(X)$. Recall that `$\mathcal{U}$ refines $\mathcal{V}$' means that for each $U\in \mathcal{U}$ there exists some $V\in \mathcal{V}$ such that $U\subseteq V$. In addition, recall that the \emph{multiplicity} of a cover $\mathcal{U}$ of $X$ is the smallest natural number $n$ (if such a $n$ exists) such that each $x\in X$ belongs to at most $n$ elements of $\mathcal{U}$.
\begin{definition}
We say that the \emph{asymptotic dimension} of the LS.R space $(X,\mathfrak{C})$ is less than or equal to $n$ ($n\in \mathbb{N}\cup \{0\}$) if each uniformly bounded cover $\mathcal{U}$ of $X$ refines a uniformly bounded cover $\mathcal{V}$ of $X$ such that the multiplicity of $\mathcal{V}$ is less than or equal to $n+1$. In this case we write $\asdim_{\mathfrak{C}}X\leq n$. If $\asdim_{\mathfrak{C}}X\leq n$ and $\operatorname{asdim}_{\mathfrak{C}}X\leq n-1$ is not true, we say $\operatorname{asdim}_{\mathfrak{C}}X=n$. If $\operatorname{asdim}_{\mathfrak{C}}X\leq n$ does not hold for each $n\in \mathbb{N}$, we say that $(X,\mathfrak{C})$ has infinite asymptotic dimension.
\end{definition}
\begin{definition}
Let $(X,\mathfrak{C})$ be a LS.R space and let $Y$ be a nonempty subset of $X$. We define\\
`$\mathcal{A}\in \mathfrak{C}\mid_{Y}$ if $\mathcal{A}\in \mathfrak{C}$, for all $\mathcal{A}\subseteq \mathcal{P}(Y)$.'\\
The family $\mathfrak{C}\mid_{Y}$ is a LS.R on $Y$ and we call it the \emph{subspace} LS.R induced on $Y$.
\end{definition}
\begin{proposition}\label{subs}
Let $Y$ be a nonempty subset of the LS.R space $(X,\mathfrak{C})$. Then
$$\asdim_{\mathfrak{C}\mid_{Y}}Y\leq \operatorname{asdim}_{\mathfrak{C}}X$$
\end{proposition}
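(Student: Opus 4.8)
The statement is the standard monotonicity of asymptotic dimension under passing to subspaces, so the plan is as follows. If $\asdim_{\mathfrak{C}}X$ is infinite there is nothing to prove, and otherwise it suffices to fix $n\in\mathbb{N}\cup\{0\}$ with $\asdim_{\mathfrak{C}}X\leq n$ and deduce $\asdim_{\mathfrak{C}\mid_{Y}}Y\leq n$. So I would start from an arbitrary uniformly bounded cover $\mathcal{U}$ of $Y$ in $(Y,\mathfrak{C}\mid_{Y})$ and aim to manufacture a uniformly bounded cover $\mathcal{W}$ of $Y$ which $\mathcal{U}$ refines and whose multiplicity is at most $n+1$.

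The first step is to promote $\mathcal{U}$ to a cover of the ambient space by adjoining the singletons outside $Y$: set $\mathcal{U}'=\mathcal{U}\cup\{\{x\}\mid x\in X\setminus Y\}$, which obviously covers $X$. The crucial point is that $\mathcal{U}'$ remains uniformly bounded in $(X,\mathfrak{C})$. To check this I would take a nonempty $\mathcal{D}\subseteq\mathcal{U}'$, split it as $\mathcal{D}=\mathcal{D}_{0}\cup\mathcal{D}_{1}$ with $\mathcal{D}_{0}=\mathcal{D}\cap\mathcal{U}$ and $\mathcal{D}_{1}=\mathcal{D}\setminus\mathcal{U}$, and write $S=\bigcup_{\{x\}\in\mathcal{D}_{1}}\{x\}\subseteq X\setminus Y$. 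Because each member of $\mathcal{D}_{0}$ is contained in $Y$ while $S\cap Y=\emptyset$, a direct inspection of Definition \ref{unif} shows that $\mathcal{A}_{\mathcal{D}}=\{S\}$ when $\mathcal{D}_{0}=\emptyset$, that $\mathcal{A}_{\mathcal{D}}=\mathcal{A}_{\mathcal{D}_{0}}$ when $\mathcal{D}_{1}=\emptyset$ (and this family is the same whether formed inside $Y$ or inside $X$, since $\bigcup\mathcal{D}_{0}\subseteq Y$), and that $\mathcal{A}_{\mathcal{D}}=\{S\}\vee\mathcal{A}_{\mathcal{D}_{0}}$ in the remaining case. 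In all cases $\{S\}\in\mathfrak{C}$ by property i) of Definition \ref{defasl} and $\mathcal{A}_{\mathcal{D}_{0}}\in\mathfrak{C}$ because $\mathcal{U}$ is uniformly bounded, so property iv) of Definition \ref{defasl} yields $\mathcal{A}_{\mathcal{D}}\in\mathfrak{C}$; hence $\mathcal{U}'$ is uniformly bounded.

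Next I would feed $\mathcal{U}'$ into the hypothesis $\asdim_{\mathfrak{C}}X\leq n$ to obtain a uniformly bounded cover $\mathcal{V}$ of $X$ which $\mathcal{U}'$ refines and whose multiplicity is at most $n+1$, and then restrict back to $Y$ by putting $\mathcal{W}=\{V\cap Y\mid V\in\mathcal{V}\}\setminus\{\emptyset\}$. It is immediate that $\mathcal{W}$ covers $Y$, that its multiplicity is at most $n+1$ (intersecting with $Y$ can only decrease the number of members of $\mathcal{V}$ through a given point), and that $\mathcal{U}$ refines $\mathcal{W}$ (if $U\in\mathcal{U}\subseteq\mathcal{U}'$ and $U\subseteq V\in\mathcal{V}$, then $U=U\cap Y\subseteq V\cap Y\in\mathcal{W}$). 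The remaining obligation is that $\mathcal{W}$ is uniformly bounded in $(Y,\mathfrak{C}\mid_{Y})$: given a nonempty $\mathcal{W}_{0}\subseteq\mathcal{W}$, pick for each $W\in\mathcal{W}_{0}$ some $V_{W}\in\mathcal{V}$ with $V_{W}\cap Y=W$ and set $\mathcal{V}'=\{V_{W}\mid W\in\mathcal{W}_{0}\}$. If $A\subseteq Y$ belongs to the family $\mathcal{A}_{\mathcal{W}_{0}}$ formed inside $Y$, then $A\subseteq\bigcup_{W\in\mathcal{W}_{0}}W\subseteq\bigcup_{W\in\mathcal{W}_{0}}V_{W}$ and $\emptyset\neq A\cap W\subseteq A\cap V_{W}$ for each $W$, so $A$ lies in $\mathcal{A}_{\mathcal{V}'}$ formed inside $X$; thus $\mathcal{A}_{\mathcal{W}_{0}}\subseteq\mathcal{A}_{\mathcal{V}'}\in\mathfrak{C}$, and property ii) of Definition \ref{defasl} gives $\mathcal{A}_{\mathcal{W}_{0}}\in\mathfrak{C}$, which since all its members are subsets of $Y$ means $\mathcal{A}_{\mathcal{W}_{0}}\in\mathfrak{C}\mid_{Y}$. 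Therefore $\mathcal{W}$ is the cover we wanted and $\asdim_{\mathfrak{C}\mid_{Y}}Y\leq n$.

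I expect the only genuinely delicate step to be the verification that $\mathcal{U}'$ stays uniformly bounded, i.e. the case analysis describing $\mathcal{A}_{\mathcal{D}}$ when $\mathcal{D}$ mixes honest members of $\mathcal{U}$ with the added singletons; it is exactly closure of $\mathfrak{C}$ under $\vee$ (property iv) of Definition \ref{defasl}) that handles the mixed case, while everything else is routine bookkeeping with Definition \ref{unif} and the downward closure property ii) of Definition \ref{defasl}.
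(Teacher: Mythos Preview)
Your proof is correct and follows essentially the same route as the paper: extend the given cover of $Y$ to $X$ by adjoining the singletons of $X\setminus Y$, verify uniform boundedness via property iv) (the $\vee$-closure) of Definition \ref{defasl}, apply the hypothesis, and restrict the resulting cover back to $Y$. Your treatment is in fact slightly more careful---you separate the three cases for $\mathcal{A}_{\mathcal{D}}$ explicitly and spell out the inclusion $\mathcal{A}_{\mathcal{W}_0}\subseteq\mathcal{A}_{\mathcal{V}'}$ needed for uniform boundedness of the restricted cover, whereas the paper handles the first point by writing $\mathcal{A}_{\mathcal{W}}=\mathcal{B}\vee\{B\}$ in one line and declares the second ``straightforward''; the only cosmetic difference is that the paper keeps just those $V\cap Y$ with $V$ containing a member of $\mathcal{U}$, while you keep all nonempty $V\cap Y$, which changes nothing.
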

\begin{proof}
Suppose that $\asdim_{\mathfrak{C}}X\leq n$, for some $n\in \mathbb{N}\cup \{0\}$. Assume that $\mathcal{U}$ is a uniformly bounded cover of $Y$. Let $\mathcal{V}=\mathcal{U}\cup \{\{x\}\mid x\in X\setminus Y\}$. Suppose that $\mathcal{W}\subseteq \mathcal{V}$. Let
$$B=\{x\in X\setminus Y\mid \{x\}\in \mathcal{W}\}$$
Assume that $\mathcal{B}=\{A\cap Y\mid A\in \mathcal{A}_{\mathcal{W}}\}$ and let $\mathcal{W}^{\prime}=\{U\in \mathcal{U}\mid U\in \mathcal{W}\}$. It is easy to see that $\mathcal{B}= \mathcal{A}_{\mathcal{W}^{\prime}}$. Since $\mathcal{U}$ is a uniformly bounded cover of $Y$ and $\mathcal{W}^{\prime}\subset \mathcal{U}$, $\mathcal{B}\in \mathfrak{C}\mid_{Y}$. Thus $\mathcal{B}\in \mathcal{C}$. Since $A=(A\cap Y)\cup B$, for all $A\in \mathcal{A}_{\mathcal{W}}$, clearly $\mathcal{A}_{\mathcal{W}}=\mathcal{B}\vee \{B\}\in \mathcal{C}$. So property iv) of Definition \ref{defasl} shows that $\mathcal{A}_{\mathcal{W}}\in \mathfrak{C}$. Thus $\mathcal{V}$ is a uniformly bounded cover of $X$. Since $\asdim_{\mathcal{C}}X\leq n$, there exist a uniformly bounded cover $\mathcal{V}^{\prime}$ of $X$ such that $\mathcal{V}$ refines $\mathcal{V}^{\prime}$ and the multiplicity of $\mathcal{V}^{\prime}$ is less than or equal to $n+1$. Let
$$\mathcal{U}^{\prime}=\{U\cap Y\mid U\in \mathcal{V}^{\prime}\,and\,V\subseteq U,\,for\,some\,V\in \mathcal{U}\}$$
Clearly $\mathcal{U}$ refines $\mathcal{U}^{\prime}$ and the multiplicity of $\mathcal{U}^{\prime}$ is less than or equal to $n+1$. In addition, it is straightforward to show that $\mathcal{U}^{\prime}$ is a uniformly bounded family of subsets of the LS.R space $(Y,\mathfrak{C}\mid_{Y})$. This shows that $\asdim_{\mathfrak{C}\mid_{Y}}Y\leq n$. Therefore, $\asdim_{\mathfrak{C}\mid_{Y}}Y\leq \operatorname{asdim}_{\mathfrak{C}}X$.
\end{proof}
\begin{theorem}
Assume that $(X,\mathfrak{C})$ and $(Y,\mathfrak{C}^{\prime})$ are two large scale equivalent LS.R spaces. Then
$$\asdim_{\mathfrak{C}}X=\asdim_{\mathfrak{C}^{\prime}}Y$$
\end{theorem}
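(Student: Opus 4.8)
The plan is to prove that $\asdim_{\mathfrak{C}}X\le n$ implies $\asdim_{\mathfrak{C}'}Y\le n$ for every $n\in\mathbb{N}\cup\{0\}$. The two conditions defining a large scale equivalence are symmetric under interchanging $f$ with its large scale inverse $g$ (and $\mathfrak{C}$ with $\mathfrak{C}'$), so $g:Y\rightarrow X$ is itself a large scale equivalence with large scale inverse $f$; hence the reverse implication is obtained by swapping the roles of $X$ and $Y$, and the equality of the two asymptotic dimensions follows at once. Fix a large scale equivalence $f:X\rightarrow Y$ and a large scale inverse $g:Y\rightarrow X$.

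Before the main step I would isolate three facts about uniformly bounded families in the sense of Definition \ref{unif}. (a) If $h:Z\rightarrow Z'$ is a LS.R mapping and $\mathcal{U}$ is uniformly bounded in $Z$, then $h(\mathcal{U})=\{h(U)\mid U\in\mathcal{U}\}$ is uniformly bounded in $Z'$; indeed every $\mathcal{W}'\subseteq h(\mathcal{U})$ equals $h(\mathcal{W})$ for $\mathcal{W}=\{U\in\mathcal{U}\mid h(U)\in\mathcal{W}'\}$, and a direct set-theoretic computation gives $\mathcal{A}_{h(\mathcal{W})}\subseteq h(\mathcal{A}_{\mathcal{W}})$, whence $\mathcal{A}_{h(\mathcal{W})}$ lies in the target structure by property ii) of Definition \ref{defasl}. (b) If $h$ is moreover a large scale equivalence with large scale inverse $k$, then $h^{-1}(\mathcal{V})$ is uniformly bounded in $Z$ whenever $\mathcal{V}$ is uniformly bounded in $Z'$: for $\mathcal{W}\subseteq h^{-1}(\mathcal{V})$ one first shows $h(\mathcal{A}_{\mathcal{W}})$ sits inside a member of the relevant structure, so $h(\mathcal{A}_{\mathcal{W}})$ and hence $(k\circ h)(\mathcal{A}_{\mathcal{W}})$ belong to it, and then the first part of Lemma \ref{equi} forces $\mathcal{A}_{\mathcal{W}}$ into it as well. (c) A union of two uniformly bounded families is uniformly bounded, because $\mathcal{A}_{\mathcal{W}_1\cup\mathcal{W}_2}\subseteq\mathcal{A}_{\mathcal{W}_1}\vee\mathcal{A}_{\mathcal{W}_2}$ and one applies property iv) of Definition \ref{defasl}; and every family of singletons is uniformly bounded, since then $\mathcal{A}_{\mathcal{V}}=\{\bigcup\mathcal{V}\}$, which belongs to any LS.R by its first property. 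Finally, for any map $h$ the multiplicity of $h^{-1}(\mathcal{V})$ at a point never exceeds the multiplicity of $\mathcal{V}$ at its image.

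Now assume $\asdim_{\mathfrak{C}}X\le n$ and let $\mathcal{U}$ be a uniformly bounded cover of $Y$. The key move is to pull $\mathcal{U}$ back to $X$ while recording enough information for the refinement to survive the return trip: I put $\mathcal{U}_X=\{g(U)\mid U\in\mathcal{U}\}\cup\{\{x\}\mid x\in X\}$, which by (a) and (c) is a uniformly bounded cover of $X$, the singletons ensuring it actually covers $X$. By hypothesis $\mathcal{U}_X$ refines a uniformly bounded cover $\mathcal{V}$ of $X$ with multiplicity at most $n+1$. Set $\mathcal{V}_Y=\{g^{-1}(V)\mid V\in\mathcal{V}\}$. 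Then $\mathcal{V}_Y$ covers $Y$; it is uniformly bounded by (b) applied to the large scale equivalence $g$; its multiplicity is at most that of $\mathcal{V}$, hence at most $n+1$; and $\mathcal{U}$ refines $\mathcal{V}_Y$, for if $U\in\mathcal{U}$ then $g(U)\in\mathcal{U}_X$, so $g(U)\subseteq V$ for some $V\in\mathcal{V}$, and therefore $U\subseteq g^{-1}(g(U))\subseteq g^{-1}(V)\in\mathcal{V}_Y$. This yields $\asdim_{\mathfrak{C}'}Y\le n$.

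I expect the only genuine difficulty to be exactly this transfer of the refinement relation. The naive attempt — pull $\mathcal{U}$ back as $f^{-1}(\mathcal{U})$, refine, then push forward — breaks down because $f\circ g$ is close to the identity only in the weak sense of Lemma \ref{equi} and need not map members of $\mathcal{U}$ into members of $\mathcal{U}$; enlarging the pulled-back cover by the sets $g(U)$ and returning along $g^{-1}$ is what makes the refinement propagate. Everything else is routine checking: that (a)--(c) really follow from the axioms in Definition \ref{defasl} together with Lemma \ref{equi}, and that taking preimages cannot increase multiplicity.
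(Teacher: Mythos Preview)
Your argument is correct and follows essentially the same route as the paper: push the cover $\mathcal{U}$ to $X$ via $g$, refine there, and pull back via $g^{-1}$, using Lemma~\ref{equi} to certify that $g^{-1}(\mathcal{V})$ is uniformly bounded in $Y$. The only organizational difference is that the paper first restricts to the subspace $Z=g(Y)$ and invokes Proposition~\ref{subs}, whereas you pad $g(\mathcal{U})$ with singletons to obtain a cover of all of $X$ directly; since the proof of Proposition~\ref{subs} itself uses exactly this singleton trick, your version simply inlines that step and is otherwise the same argument.
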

\begin{proof}
Let $f:X\rightarrow Y$ be a large scale equivalence between $X$ and $Y$ and let $g:Y\rightarrow X$ be a large scale inverse of $f$. Assume that $n\in \mathbb{N}\cup \{0\}$ and $\asdim_{\mathfrak{C}}X\leq n$. Suppose that $\mathcal{U}$ is a uniformly bounded cover of $Y$. Let $\mathcal{V}=\{g(U)\mid U\in \mathcal{U}\}$. Suppose that $\mathcal{W}\subseteq \mathcal{V}$ and let $A\in \mathcal{A}_{\mathcal{W}}$. Let $\mathcal{W}^{\prime}=\{U\in \mathcal{U}\mid g(U)\in \mathcal{W}\}$. Assume that $A^{\prime}=g^{-1}(A)\cap (\bigcup_{U\in \mathcal{W}^{\prime}}U)$. It is easy to verify that $A^{\prime}\in \mathcal{A}_{\mathcal{W}^{\prime}}$ and $g(A^{\prime})=A$. Thus there exists a subset $\mathcal{B}$ of $\mathcal{A}_{\mathcal{W}^{\prime}}$ such that $g(\mathcal{B})=\mathcal{A}_{\mathcal{W}}$. Since $\mathcal{A}_{\mathcal{W}^{\prime}}\in \mathfrak{C}^{\prime}$, property ii) of Definition \ref{defasl} shows that $\mathcal{B}\in \mathfrak{C}^{\prime}$ and since $g$ is a LS.R mapping, $\mathcal{A}_{\mathcal{W}}=g(\mathcal{B})\in \mathfrak{C}$. Therefore $\mathcal{V}$ is a uniformly bounded cover of $Z$, where $Z=g(Y)$. By Proposition \ref{subs}, $\asdim_{\mathfrak{C}\mid_{Z}}Z\leq n$. Thus there exists a uniformly bounded cover $\mathcal{V}^{\prime}$ of $Z$ such that the multiplicity of $\mathcal{V}^{\prime}$ is less than or equal to $n+1$ and $\mathcal{V}$ refines $\mathcal{V}^{\prime}$. Let $\mathcal{U}^{\prime}=\{g^{-1}(V)\mid V\in \mathcal{V}^{\prime}\}$. It is straightforward to show that $\mathcal{U}$ refines $\mathcal{U}^{\prime}$ and the multiplicity of $\mathcal{U}^{\prime}$ is less than or equal to $n+1$. It remains to show that $\mathcal{U}^{\prime}$ is a uniformly bounded family of subsets of $(Y,\mathfrak{C}^{\prime})$. To do so, assume that $\mathcal{O}\subseteq \mathcal{U}^{\prime}$. Let $\mathcal{O}^{\prime}=\{V\in \mathcal{V}^{\prime}\mid g^{-1}(V)\in \mathcal{O}\}$. It is easy to show that $g(\mathcal{A}_{\mathcal{O}})\subseteq \mathcal{A}_{\mathcal{O}^{\prime}}$. So by property ii) of Definition \ref{defasl} we have $g(\mathcal{A}_{\mathcal{O}})\in \mathfrak{C}$ and since $f$ is a LS.R mapping, $f(g(\mathcal{A}_{\mathcal{O}}))\in \mathcal{C}^{\prime}$. Thus part i) of Lemma \ref{equi} implies that $\mathcal{A}_{\mathcal{O}}\in \mathfrak{C}^{\prime}$, as we desired. Therefore we showed that $\asdim_{\mathfrak{C}^{\prime}}Y\leq n$. This shows that $\asdim_{\mathfrak{C}^{\prime}}Y\leq \asdim_{\mathfrak{C}}X$. Similar arguments can verify the inequality $\asdim_{\mathfrak{C}}X\leq \asdim_{\mathfrak{C}^{\prime}}Y$.
\end{proof}
\begin{example}
Let $X=\mathbb{N}$ and consider $X$ with the discrete topology. Let $Y$ denote the one point compactification of $X$. Assume that $\mathcal{U}$ is a uniformly bounded cover of $(X,\mathfrak{C}_{Y})$. First notice that since $\mathcal{U}$ is uniformly bounded, Corollary \ref{ub} shows that each element of $\mathcal{U}$ is bounded and hence finite. Let $i\in \mathbb{N}$. Suppose that $\mathcal{W}=\{U\in \mathcal{U}\mid i\in U\}$. Since $\{i\}\in \mathcal{A}_{\mathcal{W}}$ and $\mathcal{A}_{\mathcal{W}}\in \mathfrak{C}_{Y}$, each element of $\mathcal{A}_{\mathcal{W}}$ is bounded. So all elements of $\mathcal{A}_{\mathcal{W}}$ are finite. Thus $\bigcup_{U\in \mathcal{W}}U\in \mathcal{A}_{W}$ is finite. It clearly implies that $\mathcal{W}$ is finite. Therefore we showed that the set of all $x\in \mathbb{N}$ such that $x,i\in U$, for some $U\in \mathcal{U}$, has maximum. Let $a_{1}=1$. Suppose that $a_{n}$ is chosen, for some $n\in \mathbb{N}$. Let $a_{n+1}$ be the largest $x\in \mathbb{N}$ such that $x,l\in U$, for some $U\in \mathcal{U}$ and some $l\leq a_{n}+1$. Assume that $V_{1}=\{1,...,a_{2}\}$ and $V_{n+1}=\{a_{n}+1,...,a_{n+2}\}$, for each $n\in \mathbb{N}$. Let $\mathcal{V}=\{V_{n}\mid n\in \mathbb{N}\}$. Assume that $U\in \mathcal{U}$ and $m=\min\{x\mid x\in U\}$. If $m=1$ then clearly $U\subseteq V_{1}$. Assume that $m\geq 2$. Choose $n\in \mathbb{N}$ such that $a_{n}+1\leq m\leq a_{n+1}$ (notice that it is immediate that $a_{n+1}\geq a_{n}+1$, for each $n\in \mathbb{N}$). It is easy to verify that $U\subseteq V_{n+1}$. Thus $\mathcal{U}$ refines $\mathcal{V}$. Suppose that $\mathcal{V}^{\prime}\subseteq \mathcal{V}$. If $\mathcal{V}^{\prime}$ is finite, then each element of $\mathcal{A}_{\mathcal{V}^{\prime}}$ is finite and if $\mathcal{V}^{\prime}$ is infinite, then each element of $\mathcal{A}_{\mathcal{V}^{\prime}}$ is infinite. Thus in both cases $\mathcal{A}_{\mathcal{V}^{\prime}}\in \mathfrak{C}_{Y}$. This shows that $\mathcal{V}$ is a uniformly bounded family of subsets of $X$. In addition, if $n\in \mathbb{N}$ then $V_{n}\cap V_{m}=\emptyset$, for each $m\geq n+2$. It shows that the multiplicity of $\mathcal{V}$ is equal to $2$. Thus we showed that $\asdim_{\mathfrak{C}_{Y}}X\leq 1$. Now let $\mathcal{O}=\{\{n,n+1\}\mid n\in \mathbb{N}\}$. It is easy to see that the family $\mathcal{O}$ is a uniformly bounded family of subsets of $X$. Assume that $\mathcal{O}^{\prime}$ is a uniformly bounded cover of $(X,\mathcal{C}_{Y})$ such that $\mathcal{O}$ refines it. Suppose that the multiplicity of $\mathcal{O}^{\prime}$ is less than or equal to $1$. Choose $O\in \mathcal{O}^{\prime}$ such that $\{1,2\}\subseteq O$. Suppose that $n\in \mathbb{N}$ and $n\in O$. Since the multiplicity of $\mathcal{O}^{\prime}$ is less than $1$, $n\notin O^{\prime}$, for all $O^{\prime}\in \mathcal{O}^{\prime}$ such that $O^{\prime}\neq O$. Since $\{n,n+1\}\in \mathcal{O}$, $n+1\in O$.  So $O=\mathbb{N}$ and this contradicts the fact $O$ is bounded. Thus the multiplicity of $\mathcal{O}^{\prime}$ is bigger than or equal to $2$. Therefore we showed that $\asdim_{\mathfrak{C}_{Y}}X\geq 1$ and this leads to $\asdim_{\mathfrak{C}_{Y}}X=1$.
\end{example}
\section{Large scale regular LS.R spaces}\label{s5}
\begin{definition}
We call the LS.R space $(X,\mathfrak{C})$ \emph{large scale regular} (\emph{LS-regular}), if $\mathcal{A}\in \mathfrak{C}$ and $(A_{1}\cup A_{2})\in \mathcal{A}$, for two nonempty subsets $A_{1}$ and $A_{2}$ of $X$, then there exist some $\mathcal{A}_{1},\mathcal{A}_{2}\in \mathfrak{C}$ such that they contain $A_{1}$ and $A_{2}$, respectively and $\mathcal{A}\subseteq (\mathcal{A}_{1}\vee \mathcal{A}_{2})$.
\end{definition}
\begin{corollary}
Let $X$ be a set and assume that $\mathcal{E}$ and $\lambda$ are denoting a coarse structure and an AS.R on $X$, respectively. Then $\mathfrak{C}_{\mathcal{E}}$, $\tilde{\mathfrak{C}}_{\mathcal{E}}$, $\mathfrak{C}_{\lambda}$ and $\tilde{\mathfrak{C}}_{\lambda}$ are LS-regular large scale resemblances on $X$.
\end{corollary}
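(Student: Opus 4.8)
The plan is to handle all four large scale resemblances by one common \emph{splitting} device. Given $\mathcal{A}\in\mathfrak{C}$ with $(A_{1}\cup A_{2})\in\mathcal{A}$ and $A_{1},A_{2}\neq\emptyset$, I will write each member $C\in\mathcal{A}$ as a union $C=C_{1}^{C}\cup C_{2}^{C}$ with $C_{i}^{C}$ ``close to $A_{i}$'', and then set $\mathcal{A}_{i}=\{A_{i}\}\cup\{C_{i}^{C}\mid C\in\mathcal{A}\}$ for $i=1,2$. With such a splitting, $A_{i}\in\mathcal{A}_{i}$ is automatic, and $C=C_{1}^{C}\cup C_{2}^{C}\in\mathcal{A}_{1}\vee\mathcal{A}_{2}$ for every $C\in\mathcal{A}$, so $\mathcal{A}\subseteq\mathcal{A}_{1}\vee\mathcal{A}_{2}$; the work is then reduced to producing a suitable split and checking $\mathcal{A}_{1},\mathcal{A}_{2}\in\mathfrak{C}$.

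For $\mathfrak{C}_{\lambda}$ the splitting is handed to me directly by the AS.R axioms: since $(A_{1}\cup A_{2})\lambda C$, property ii) of the definition of an AS.R gives nonempty $C_{1}^{C},C_{2}^{C}$ with $C=C_{1}^{C}\cup C_{2}^{C}$ and $A_{i}\lambda C_{i}^{C}$. Then every member of $\mathcal{A}_{1}$ is $\lambda$-related to $A_{1}$, hence to every other member by transitivity, so $\mathcal{A}_{1}\in\mathfrak{C}_{\lambda}$, and similarly $\mathcal{A}_{2}\in\mathfrak{C}_{\lambda}$. This also settles $\tilde{\mathfrak{C}}_{\mathcal{E}}$, because $\tilde{\mathfrak{C}}_{\mathcal{E}}=\mathfrak{C}_{\lambda_{\mathcal{E}}}$ by the corollary stating this identity.

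For $\mathfrak{C}_{\mathcal{E}}$ I first replace a witness $E_{0}$ of $\mathcal{A}$ by the symmetric member $E=E_{0}\cup E_{0}^{-1}\cup\Delta\in\mathcal{E}$, which still witnesses $\mathcal{A}$, and for $C\in\mathcal{A}$ I put $C_{1}^{C}=C\cap E(A_{1})$ and $C_{2}^{C}=C\cap E(A_{2})$. Since $C\subseteq E(A_{1}\cup A_{2})=E(A_{1})\cup E(A_{2})$ these pieces cover $C$, and each is nonempty because $A_{i}\subseteq E(C)$ together with $E=E^{-1}$ forces some point of $C$ to lie in $E(A_{i})$. One then checks that any two members of $\mathcal{A}_{1}$ are $(E\circ E)$-close, using $C_{1}^{C}\subseteq E(A_{1})$ and $A_{1}\subseteq E(C_{1}^{C})$; hence $\mathcal{A}_{1}\in\mathfrak{C}_{\mathcal{E}}$ with witness $E\circ E$, and symmetrically $\mathcal{A}_{2}\in\mathfrak{C}_{\mathcal{E}}$. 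The case $\tilde{\mathfrak{C}}_{\lambda}$ runs in parallel, with $\bigotimes_{\mathcal{U}}$ playing the role of $E$ for a uniformly bounded family $\mathcal{U}$ witnessing $\mathcal{A}$ (enlarged to contain all singletons so that $\Delta\subseteq\bigotimes_{\mathcal{U}}$, which is legitimate since a union of two uniformly bounded families is uniformly bounded): set $C_{i}^{C}=C\cap\bigotimes_{\mathcal{U}}(A_{i})$; to exhibit a uniformly bounded family witnessing $\mathcal{A}_{i}\in\tilde{\mathfrak{C}}_{\lambda}$ I use $\mathcal{W}=\{U\cup V\mid U,V\in\mathcal{U},\ U\cap V\neq\emptyset\}$, which is uniformly bounded by Lemma \ref{unifb} and satisfies $\bigotimes_{\mathcal{U}}\circ\bigotimes_{\mathcal{U}}\subseteq\bigotimes_{\mathcal{W}}$, so that the composition-type bounds obtained for $\mathcal{A}_{i}$ become genuine $\bigotimes_{\mathcal{W}}$-bounds.

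The verifications that $\mathcal{A}_{i}\in\mathfrak{C}$ are routine; the points that need a little care are the non-emptiness of the pieces $C_{i}^{C}$ in the $\mathfrak{C}_{\mathcal{E}}$ and $\tilde{\mathfrak{C}}_{\lambda}$ cases (this is exactly why the symmetry of $E$, resp.\ of $\bigotimes_{\mathcal{U}}$, is built in) and, in the $\tilde{\mathfrak{C}}_{\lambda}$ case, the passage to $\mathcal{W}$ needed to keep the witnessing family uniformly bounded after composing. I expect the $\tilde{\mathfrak{C}}_{\lambda}$ case to be the main obstacle, since there the ``closeness relation'' is not a member of a coarse structure and one has to return to a uniformly bounded family by hand through Lemma \ref{unifb}.
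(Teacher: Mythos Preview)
Your proposal is correct and supplies precisely the kind of routine verification the paper has in mind: the paper's own proof consists of the single sentence ``It is straightforward.'' Your splitting device (taking $C_i^{C}=C\cap E(A_i)$, respectively $C\cap\bigotimes_{\mathcal{U}}(A_i)$, and invoking the AS.R decomposition axiom for $\mathfrak{C}_{\lambda}$) is exactly the natural way to make this straightforward claim explicit, and the use of Lemma~\ref{unifb} to recover a uniformly bounded witness after ``composing'' in the $\tilde{\mathfrak{C}}_{\lambda}$ case is the appropriate extra step there.
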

\begin{proof}
It is straightforward.
\end{proof}
\begin{example}
Let $X$ and $\mathfrak{C}$ be as Example \ref{counter}. Clearly $\mathfrak{C}$ is not LS-regular.
\end{example}
\begin{example}
The LS.R $\mathfrak{C}_{Y}$ defined in Example \ref{count1}, is clearly LS-regular. Recall that $\mathfrak{C}_{Y}\neq \mathfrak{C}_{\mathcal{E}},\tilde{\mathfrak{C}}_{\mathcal{E}}$, for all coarse structure $\mathcal{E}$ on $X$.
\end{example}
\begin{definition}
Let $\mathfrak{C}$ be a LS.regular LS.R on the set $X$. For two subsets $A$ and $B$ of $X$, define\\
`$A\lambda_{\mathfrak{C}}B$ if $A,B\in \mathcal{A}$, for some $\mathcal{A}\in \mathfrak{C}$'.\\
It is easy to verify that $\lambda_{\mathfrak{C}}$ is an AS.R on the set $X$.
\end{definition}
Assume that $\lambda$ and $\lambda^{\prime}$ are two AS.R relations on the set $X$. We say $\lambda\leq \lambda^{\prime}$ if $A\lambda B$ implies $A\lambda^{\prime} B$, for all $A,B\subseteq X$. The following two propositions are easy to prove.
\begin{proposition}
Let $\lambda$ be an AS.R on the set $X$. Denote $\mathfrak{C}_{\lambda}$ and $\tilde{\mathfrak{C}}_{\lambda}$ by $\mathfrak{C}$ and $\mathfrak{C}^{\prime}$, respectively. Then $\lambda_{\mathfrak{C}}=\lambda$ and $\lambda_{\mathfrak{C}^{\prime}}\leq \lambda$.
\end{proposition}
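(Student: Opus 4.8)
The plan is to treat the two assertions separately, both being straightforward unwindings of the relevant definitions. For the equality $\lambda_{\mathfrak{C}}=\lambda$ with $\mathfrak{C}=\mathfrak{C}_{\lambda}$, I would first verify $\lambda\leq\lambda_{\mathfrak{C}}$: given $A\lambda B$, since $\lambda$ is reflexive and symmetric the two-element family $\{A,B\}$ has the property that any two of its members are asymptotically alike, so $\{A,B\}\in\mathfrak{C}_{\lambda}$; as $A,B\in\{A,B\}$, this gives $A\lambda_{\mathfrak{C}}B$. For the reverse inclusion $\lambda_{\mathfrak{C}}\leq\lambda$, suppose $A\lambda_{\mathfrak{C}}B$; by definition there is some $\mathcal{A}\in\mathfrak{C}_{\lambda}$ with $A,B\in\mathcal{A}$, and the very definition of $\mathfrak{C}_{\lambda}$ forces $A\lambda B$. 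Combining the two inclusions yields $\lambda_{\mathfrak{C}}=\lambda$.

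For the inequality $\lambda_{\mathfrak{C}^{\prime}}\leq\lambda$ with $\mathfrak{C}^{\prime}=\tilde{\mathfrak{C}}_{\lambda}$, I would invoke the earlier corollary stating $\tilde{\mathfrak{C}}_{\lambda}\subseteq\mathfrak{C}_{\lambda}$. Then if $A\lambda_{\mathfrak{C}^{\prime}}B$, there is some $\mathcal{A}\in\tilde{\mathfrak{C}}_{\lambda}$ with $A,B\in\mathcal{A}$; by that corollary $\mathcal{A}\in\mathfrak{C}_{\lambda}$, and again the definition of $\mathfrak{C}_{\lambda}$ gives $A\lambda B$. Hence $\lambda_{\mathfrak{C}^{\prime}}\leq\lambda_{\mathfrak{C}}=\lambda$.

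The only point deserving a word of justification, and thus the closest thing to an obstacle here, is that $\lambda_{\mathfrak{C}}$ and $\lambda_{\mathfrak{C}^{\prime}}$ are legitimately defined, i.e.\ that $\mathfrak{C}_{\lambda}$ and $\tilde{\mathfrak{C}}_{\lambda}$ are LS-regular; but this is precisely the content of the corollary preceding this proposition, so nothing more is needed.
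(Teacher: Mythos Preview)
Your proposal is correct and is exactly the kind of direct unwinding of definitions the paper has in mind; the paper itself gives no proof, merely remarking that this proposition (together with the next one) is ``easy to prove.'' Your use of the inclusion $\tilde{\mathfrak{C}}_{\lambda}\subseteq\mathfrak{C}_{\lambda}$ from the earlier corollary, and your observation that LS-regularity of $\mathfrak{C}_{\lambda}$ and $\tilde{\mathfrak{C}}_{\lambda}$ is supplied by the corollary immediately preceding this proposition, are both apt.
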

\begin{proposition}
Let $\mathfrak{C}$ be a LS.regular LS.R on the set $X$ and denote $\lambda_{\mathfrak{C}}$ by $\lambda$. Then $\mathfrak{C}\subseteq \mathfrak{C}_{\lambda}$.
\end{proposition}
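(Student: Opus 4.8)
The plan is simply to unwind the two definitions involved. Recall first that $\lambda=\lambda_{\mathfrak{C}}$ is a well-defined AS.R precisely because $\mathfrak{C}$ is assumed LS-regular (LS-regularity is what delivers the splitting axiom ii) of an AS.R), so that the family $\mathfrak{C}_{\lambda}$ makes sense; beyond guaranteeing this, LS-regularity plays no further role in the proof of the inclusion itself.

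Now let $\mathcal{A}\in\mathfrak{C}$, and let me check that $\mathcal{A}\in\mathfrak{C}_{\lambda}$, i.e. that $A\lambda B$ for every pair $A,B\in\mathcal{A}$. Fix such $A$ and $B$. Since $A,B\in\mathcal{A}$ and $\mathcal{A}\in\mathfrak{C}$, the set $\mathcal{A}$ itself is an admissible witness in the definition of $\lambda_{\mathfrak{C}}$, so $A\lambda_{\mathfrak{C}}B$, that is, $A\lambda B$. As $A$ and $B$ were arbitrary members of $\mathcal{A}$, we conclude $\mathcal{A}\in\mathfrak{C}_{\lambda}$. Hence $\mathfrak{C}\subseteq\mathfrak{C}_{\lambda}$.

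There is essentially no obstacle here; the statement is an immediate consequence of the definitions, which is presumably why the paper groups it with the preceding proposition as ``easy to prove''. The only point worth flagging is that the inclusion is genuinely one-directional: combining this with the previous proposition, one sees that $\mathfrak{C}=\mathfrak{C}_{\lambda_{\mathfrak{C}}}$ exactly when $\mathfrak{C}$ arises as $\mathfrak{C}_{\mu}$ for some AS.R $\mu$, whereas in general $\mathfrak{C}_{\lambda}\setminus\mathfrak{C}$ can be nonempty (as already witnessed by the earlier examples in which $\tilde{\mathfrak{C}}_{\lambda}\subsetneq\mathfrak{C}_{\lambda}$).
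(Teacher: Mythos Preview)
Your argument is correct and is exactly the direct unwinding of the two definitions that the paper intends when it declares this proposition (together with the preceding one) ``easy to prove'' without supplying an argument. One small caveat on your closing parenthetical, which is peripheral to the proof: the strict inclusion $\tilde{\mathfrak{C}}_{\lambda}\subsetneq\mathfrak{C}_{\lambda}$ does not by itself witness $\mathfrak{C}_{\lambda_{\mathfrak{C}}}\supsetneq\mathfrak{C}$ for $\mathfrak{C}=\tilde{\mathfrak{C}}_{\lambda}$, since the relevant comparison is with $\mathfrak{C}_{\lambda_{\tilde{\mathfrak{C}}_{\lambda}}}$ and the previous proposition only guarantees $\lambda_{\tilde{\mathfrak{C}}_{\lambda}}\leq\lambda$, not equality.
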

\begin{proposition}\label{111}
Let $(X,\mathfrak{C})$ be a LS-regular LS.R space. If $\mathcal{U}$ is a uniformly bounded family of subsets of the LS.R space $(X,\mathfrak{C})$, then it is a uniformly bounded family of subsets of the AS.R space $(X,\lambda_{\mathfrak{C}})$,
\end{proposition}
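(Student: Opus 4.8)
The plan is to unwind the two notions of uniform boundedness and, for every pair $A,B$ satisfying the AS.R condition, to exhibit an explicit member of $\mathfrak{C}$ containing both $A$ and $B$. The starting point is the elementary reformulation
$$\bigotimes_{\mathcal{U}}(B)=\bigcup\{U\in\mathcal{U}\mid U\cap B\neq\emptyset\},$$
valid for every $B\subseteq X$, which translates the hypothesis $A\subseteq\bigotimes_{\mathcal{U}}(B)$ into the statement that every $a\in A$ lies in some $U\in\mathcal{U}$ with $U\cap B\neq\emptyset$ (and symmetrically after interchanging $A$ and $B$). It is worth noting that LS-regularity enters only to guarantee that $\lambda_{\mathfrak{C}}$ is an AS.R in the first place, so that the target statement even makes sense; the argument itself will not invoke it further.

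Now suppose $A,B\subseteq X$ satisfy $A\subseteq\bigotimes_{\mathcal{U}}(B)$ and $B\subseteq\bigotimes_{\mathcal{U}}(A)$; I must produce $\mathcal{A}\in\mathfrak{C}$ with $A,B\in\mathcal{A}$. If $A=\emptyset$, then $B\subseteq\bigotimes_{\mathcal{U}}(\emptyset)=\emptyset$, so $A=B=\emptyset$ and $\{\emptyset\}\in\mathfrak{C}$ (property i of Definition \ref{defasl}) does the job; the case $B=\emptyset$ is identical. So assume $A,B\neq\emptyset$ and put
$$\mathcal{V}=\{U\in\mathcal{U}\mid U\cap A\neq\emptyset\text{ and }U\cap B\neq\emptyset\}.$$
The key observation is that $\mathcal{V}$ is already large enough: given $a\in A$, the reformulation furnishes some $U\in\mathcal{U}$ with $a\in U$ and $U\cap B\neq\emptyset$, and then $a\in U\cap A$ forces $U\in\mathcal{V}$; hence $A\subseteq\bigcup_{U\in\mathcal{V}}U$, and symmetrically $B\subseteq\bigcup_{U\in\mathcal{V}}U$. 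In particular $\mathcal{V}\neq\emptyset$. On the other hand, by the very definition of $\mathcal{V}$ each $U\in\mathcal{V}$ meets both $A$ and $B$. Comparing with Definition \ref{unif}, these two facts say precisely that $A\in\mathcal{A}_{\mathcal{V}}$ and $B\in\mathcal{A}_{\mathcal{V}}$.

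Finally, since $\mathcal{V}$ is a nonempty subfamily of the uniformly bounded family $\mathcal{U}$, Definition \ref{unif} gives $\mathcal{A}_{\mathcal{V}}\in\mathfrak{C}$; as $A,B\in\mathcal{A}_{\mathcal{V}}$, this yields $A\lambda_{\mathfrak{C}}B$, which is exactly the condition for $\mathcal{U}$ to be uniformly bounded in the AS.R space $(X,\lambda_{\mathfrak{C}})$. I expect the only point that genuinely needs care to be the \emph{large enough} step for $\mathcal{V}$, namely verifying that the witness $U$ for $a\in\bigotimes_{\mathcal{U}}(B)$ automatically meets $A$ as well, so that $A$ (and likewise $B$) is both covered by $\bigcup\mathcal{V}$ and meets every member of $\mathcal{V}$; the remaining parts, including the disposal of the empty-set case, are routine bookkeeping.
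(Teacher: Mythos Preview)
Your proof is correct and follows essentially the same route as the paper: define $\mathcal{V}=\{U\in\mathcal{U}\mid U\cap A\neq\emptyset\text{ and }U\cap B\neq\emptyset\}$, check that $A,B\in\mathcal{A}_{\mathcal{V}}$, and conclude $A\lambda_{\mathfrak{C}}B$ from $\mathcal{A}_{\mathcal{V}}\in\mathfrak{C}$. Your version is in fact more careful than the paper's, which omits the empty-set case, leaves the verification that $A,B\in\mathcal{A}_{\mathcal{V}}$ to the reader, and contains a typo (writing $\mathcal{A}_{\mathcal{U}}$ where $\mathcal{A}_{\mathcal{V}}$ is meant).
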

\begin{proof}
Assume that $A$ and $B$ are two subsets of $X$ such that $A\subseteq \bigotimes_{\mathcal{U}}(B)$ and $B\subseteq \bigotimes_{\mathcal{U}}(A)$. Let $\mathcal{V}=\{U\in \mathcal{U}\mid A\cap U\neq \emptyset\,and\,B\cap U\neq \emptyset\}$. Clearly $A,B\in \mathcal{A}_{\mathcal{U}}$ and hence $A\lambda_{\mathfrak{C}}B$.
\end{proof}
The inverse of Proposition \ref{111} is not true in general.
\begin{example}
Let $X=\mathbb{N}$ and assume that $\mathcal{E}$ denotes the family of all $E\subseteq X\times X$ with the following property.\\
`There exists some $n\in \mathbb{N}$ such that $E(x)$ and $E^{-1}(x)$ have at most $n$ members, for all $x\in X$.'\\
 The family $\mathcal{E}$ is a coarse structure on $X$ (Example 2.44 of \cite{Roe}). Let $\mathfrak{C}=\mathfrak{C}_{\mathcal{E}}$. For two subsets $A$ and $B$ of $X$ we have $A\lambda_{\mathfrak{C}}B$ if and only if $A$ and $B$ are both finite or both infinite (see Example 3.1 of \cite{me}). Let $U_{i}=\{i,i+1,...,2i\}$, for each $i\in \mathbb{N}$. Assume that $\mathcal{U}=\{U_{i}\mid i\in \mathbb{N}\}$. It is easy to verify that $\mathcal{U}$ is a uniformly bounded family of subsets of the AS.R space $(X,\lambda_{\mathfrak{C}})$. We claim that $\mathcal{A}_{\mathcal{U}}\notin \mathfrak{C}$. Suppose that, contrary to our claim, $E\in \mathcal{E}$ is such that $A\subseteq E(B)$, for all $A,B\in \mathcal{A}_{\mathcal{U}}$. Let $N\in \mathbb{N}$ be such that $E(x)$ and $E^{-1}(x)$ has at most $N$ members, for all $x\in X$. Clearly $X\in \mathcal{A}_{\mathcal{U}}$. Since for each $x\in X$, $E(x)$ and $E^{-1}(x)$ have finite elements, we can choose $m\in \mathbb{N}$ large enough to have $(E(m)\cup E^{-1}(m))\cap U_{i}=\emptyset$, for each $i\leq 2N$. Let $i>2N$. Since $U_{i}$ has more then $2N+1$ elements, we can choose $a_{i}\in U_{i}$ such that $a_{i}\notin (E(m)\cup E^{-1}(m))$. Let $A=(\cup_{i=1}^{2N}U_{i})\cup B$, where $B=\{a_{i}\mid i>2N\}$. Clearly $A\in \mathcal{A}_{\mathcal{U}}$ and $(E(m)\cup E^{-1}(m))\cap A=\emptyset$. It shows that $\mathbb{N}$ is not a subset of $E(A)$, a contradiction. Therefore $\mathcal{A}_{\mathcal{U}}\notin \mathfrak{C}$ and hence $\mathcal{U}$ is not a uniformly bounded family of subsets of the LS.R space $(X,\mathfrak{C})$.
\end{example}
\begin{definition}
We call the LS.R space $(X,\mathfrak{C})$ an A-LS.R space if,\\
i) $(X,\mathfrak{C})$ is LS-regular.\\
ii) $\mathcal{A}\subseteq \mathcal{P}(X)$ and $\{A,B\}\in \mathfrak{C}$, for each $A,B\in \mathcal{A}$, then $\mathcal{A}\in \mathfrak{C}$.
\end{definition}
We denote the category of all LS.R spaces and LS.R mappings by $\mathbf{L}$.
\begin{theorem}
Suppose that $\mathbf{A}$ and $\mathbf{R}$ denote full subcategories of $\mathbf{L}$ those objects are all A-LS.R spaces and all LS-regular LS.R spaces, respectively. Then,\\
i) $\mathbf{A}$ is isomorphic to the category of all AS.R spaces and AS.R mappings.\\
ii) $\mathbf{A}$ is a reflective full subcategory of $\mathbf{R}$.
\end{theorem}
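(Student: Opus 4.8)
The plan is to prove both parts by exhibiting explicit functors and checking they are (i) mutually inverse equivalences, and (ii) that the inclusion $\mathbf{A}\hookrightarrow\mathbf{R}$ admits a left adjoint.

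For part i), I would define a functor $\Phi$ from the category of AS.R spaces to $\mathbf{A}$ by sending $(X,\lambda)$ to $(X,\mathfrak{C}_\lambda)$, and a functor $\Psi$ in the other direction by sending an A-LS.R space $(X,\mathfrak{C})$ to $(X,\lambda_{\mathfrak{C}})$. On morphisms both functors act as the identity on underlying maps; this is legitimate by the Proposition in \S\ref{s2} stating that $f$ is a LS.R mapping from $(X,\mathfrak{C}_\lambda)$ to $(Y,\mathfrak{C}_{\lambda'})$ iff it is an AS.R mapping, together with the observation that an LS.R mapping of A-LS.R spaces carries pairs in $\mathfrak{C}$ to pairs in $\mathfrak{C}'$, hence is AS.R for $\lambda_{\mathfrak{C}},\lambda_{\mathfrak{C}'}$. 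The first composite is handled by the earlier Proposition asserting $\lambda_{\mathfrak{C}_\lambda}=\lambda$. For the other composite I must show $\mathfrak{C}_{\lambda_{\mathfrak{C}}}=\mathfrak{C}$ for every A-LS.R space $(X,\mathfrak{C})$: the inclusion $\mathfrak{C}\subseteq\mathfrak{C}_{\lambda_{\mathfrak{C}}}$ is the earlier Proposition, and the reverse inclusion is exactly axiom ii) of the definition of A-LS.R space, since $\mathcal{A}\in\mathfrak{C}_{\lambda_{\mathfrak{C}}}$ means $A\lambda_{\mathfrak{C}}B$, i.e. $\{A,B\}\in\mathfrak{C}$, for all $A,B\in\mathcal{A}$. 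Finally I would note that $\mathfrak{C}_\lambda$ is always LS-regular (stated earlier) and plainly satisfies axiom ii), so $\Phi$ does land in $\mathbf{A}$.

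For part ii), I would construct the reflector $r\colon\mathbf{R}\to\mathbf{A}$ by $r(X,\mathfrak{C})=(X,\mathfrak{C}_{\lambda_{\mathfrak{C}}})$, with the reflection morphism being the identity map $\eta_X\colon(X,\mathfrak{C})\to(X,\mathfrak{C}_{\lambda_{\mathfrak{C}}})$. One first checks $\eta_X$ is an LS.R mapping: boundedness of preimages is trivial since the underlying map is the identity and the bounded sets coincide (a set is bounded for $\mathfrak{C}$ iff it is $\lambda_{\mathfrak{C}}$-bounded iff it is bounded for $\mathfrak{C}_{\lambda_{\mathfrak{C}}}$), and $\mathfrak{C}\subseteq\mathfrak{C}_{\lambda_{\mathfrak{C}}}$ gives $\eta_X(\mathcal{A})=\mathcal{A}\in\mathfrak{C}_{\lambda_{\mathfrak{C}}}$ for $\mathcal{A}\in\mathfrak{C}$. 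The target is an A-LS.R space by part i) (it is $\Phi\Psi(X,\mathfrak{C})$). For the universal property, given an A-LS.R space $(Z,\mathfrak{D})$ and an LS.R mapping $f\colon(X,\mathfrak{C})\to(Z,\mathfrak{D})$, I claim $f$ is automatically an LS.R mapping $(X,\mathfrak{C}_{\lambda_{\mathfrak{C}}})\to(Z,\mathfrak{D})$, and this factorization through $\eta_X$ is then unique because $\eta_X$ is the identity on points. The key step here is: if $\mathcal{A}\in\mathfrak{C}_{\lambda_{\mathfrak{C}}}$ then $f(\mathcal{A})\in\mathfrak{D}$. Since $\mathcal{A}\in\mathfrak{C}_{\lambda_{\mathfrak{C}}}$ means every pair $\{A,B\}\subseteq\mathcal{A}$ lies in $\mathfrak{C}$, applying $f$ and the fact that $f$ preserves $\mathfrak{C}$ gives $\{f(A),f(B)\}\in\mathfrak{D}$ for all $A,B\in\mathcal{A}$; then axiom ii) of A-LS.R spaces applied to $\mathfrak{D}$ yields $f(\mathcal{A})\in\mathfrak{D}$.

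The main obstacle I anticipate is the bookkeeping around boundedness and the $\mathcal{A}\vee\mathcal{B}$ stability needed to see that each of the candidate structures $\mathfrak{C}_\lambda$, $\mathfrak{C}_{\lambda_{\mathfrak{C}}}$ is genuinely an object of $\mathbf{A}$ (LS-regularity plus axiom ii)), and verifying that the identity-on-points maps really are LS.R mappings in both directions — in particular that bounded subsets are detected identically by $\mathfrak{C}$ and by $\mathfrak{C}_{\lambda_{\mathfrak{C}}}$, which uses Lemma \ref{unb} and the definition of $\lambda_{\mathfrak{C}}$. Everything else reduces, via the propositions already established in \S\ref{s2} and \S\ref{s5}, to unwinding definitions, so I would keep those verifications brief and concentrate the exposition on the universal-property argument in part ii).
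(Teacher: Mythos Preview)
Your proposal is correct and follows essentially the same route as the paper. In part ii) the paper defines the reflection of $(X,\mathfrak{C})$ as $(X,\tilde{\mathfrak{C}})$ with $\tilde{\mathfrak{C}}=\{\mathcal{A}\subseteq\mathcal{P}(X):\{A,B\}\in\mathfrak{C}\text{ for all }A,B\in\mathcal{A}\}$, which is precisely your $\mathfrak{C}_{\lambda_{\mathfrak{C}}}$; your argument is in fact more detailed than the paper's, which leaves the universal-property verification and the functoriality in part i) as ``straightforward'' or ``clearly''.
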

\begin{proof}
i) Suppose that $F$ denotes the functor that associates each A-LS.R space $(X,\mathfrak{C})$, to the AS.R space $(X,\lambda_{\mathfrak{C}})$ and each LS.R mapping $f:X\rightarrow Y$ between two A-LS.R spaces $(X,\mathfrak{C})$ and $(Y,\mathfrak{C}^{\prime})$, to the AS.R mapping $f:(X,\lambda_{\mathfrak{C}})\rightarrow (Y,\lambda_{\mathfrak{C}^{\prime}})$. It is straightforward to show that $F$ defines an isomorphism of categories.\\
ii) Suppose that $(X,\mathfrak{C})$ is a LS-regular LS.R space. Define $\tilde{\mathfrak{C}}$ to be the set of all $\mathcal{A}\subseteq \mathcal{P}(X)$ such that $\{A,B\}\in \mathfrak{C}$, for all $A,B\in \mathcal{A}$. Clearly $(X,\tilde{\mathfrak{C}})$ is an A-LS.R space. In addition notice that if $(Y,\mathfrak{D})$ is an A-LS.R space and $f:(X,\mathfrak{C})\rightarrow (Y,\mathfrak{D})$ is a LS.R mapping, then $f:(X,\tilde{\mathfrak{C}})\rightarrow (Y,\mathfrak{D})$ is a LS.R mapping. Thus $((X,\tilde{\mathfrak{C}}),i)$ is the $\mathbf{A}$ reflection of $(X,\mathfrak{C})$.
\end{proof}

\end{document}